\newtheorem{theorem}{Theorem}[section]
\newtheorem{lemma}[theorem]{Lemma}
\newtheorem{corollary}[theorem]{Corollary}
\newtheorem{proposition}[theorem]{Proposition}
\theoremstyle{definition}
\newtheorem{definition}[theorem]{Definition}
\newtheorem{example}[theorem]{Example}
\newtheorem{remark}[theorem]{Remark}
\numberwithin{equation}{section}
\crefname{equation}{}{}
\DeclareMathOperator{\ord}{ord}
\DeclareMathOperator{\lcm}{lcm}
\DeclareMathOperator{\img}{im}
\DeclareMathOperator{\rank}{rank}
\DeclareMathOperator{\bigO}{\mathcal{O}}
\newcommand{\NN}{\mathbb{N}}
\newcommand{\ZZ}{\mathbb{Z}}
\newcommand{\FF}{\mathbb{F}}
\newcommand{\FFC}{\overline{\mathbb{F}}}
\newcommand{\FFA}{\mathcal{F}}
\newcommand{\Ga}{\mathcal A}
\newcommand{\Gb}{\mathcal B}
\newcommand{\Gap}{\Ga^{\textnormal{p}}}
\newcommand{\Gbp}{\Gb^{\textnormal{p}}}
\title[Factorization and irreducibility of composed products]{Factorization and irreducibility of composed products}
\subjclass{11T06, 12E20}
\date{\today}
\author{Lukas Kölsch}
\author{Lucas Krompholz}
\author{Gohar Kyureghyan}
\address{L. Kölsch, University of South Florida, USA}
\email{lukas.koelsch.math@gmail.com}
\address{L. Krompholz, University of Rostock, Germany}
\email{lucas.krompholz@uni-rostock.de}
\address{G. Kyureghyan, University of Rostock, Germany}
\email{gohar.kyureghyan@uni-rostock.de}
\begin{document}
\begin{abstract}
Brawley and Carlitz introduced diamond products of elements of finite fields and associated composed products of polynomials in 1987.
Composed products yield a method to construct irreducible polynomials of  large composite degrees from irreducible polynomials of lower degrees. We show that the composed product of two irreducible polynomials of degrees $m$ and $n$ is again irreducible if and only if $m$ and $n$ are coprime and the involved diamond product satisfies a special cancellation property, the so-called conjugate cancellation.  This completes the characterization of irreducible composed products, considered in several previous papers.
More generally, we give precise criteria when a diamond product satisfies conjugate cancellation.
For diamond products defined via bivariate polynomials, we prove simple criteria that characterize when conjugate cancellation holds. We also provide efficient algorithms to check these criteria. We achieve stronger results as well as more efficient algorithms in the case that the polynomials are bilinear. Lastly, we consider possible constructions of normal elements using composed products and the methods we developed.
\end{abstract}
\maketitle

\section{Introduction}

Polynomials over the finite field $\FF_q$ are fundamental objects in theoretical aspects and in variety of applications
of finite fields. Explicit constructions of irreducible polynomials and the study of factorization of
large families of polynomials are research problems with high impact on the theory and applications of finite fields, see~\cite[Chapter 3]{mullen2013handbook} for a concise overview.

Brawley and Carlitz \cite{brawley1987IrreduciblesComposedProduct} introduced composed products as a root-based method to construct irreducible polynomials over $\FF_q$ of large degrees from irreducible polynomials of small degrees. The construction uses so-called \emph{diamond products}, which are binary operations $\diamond:G\times G\rightarrow G$, satisfying
\[ (\alpha\diamond\beta)^q = \alpha^q\diamond\beta^q \]
for all $\alpha,\beta\in G$. Here, $G$ is a subset of an algebraic closure $\FFC_q$ which is invariant under the Frobenius automorphism $\sigma:x\rightarrow x^q$, i.e., $\alpha^q\in G$ for all $\alpha\in G$.
Given a diamond product, the composed product\footnote{Note the difference from the usual notion of composition, i.e., $(f\circ g)(x) = f(g(x))$.} of two monic polynomials $f,g\in\FF_q[X]$ is defined as
\[ f\diamond g = \prod_{\alpha}\prod_{\beta} (X-\alpha\diamond\beta)\]
where $\alpha$ and $\beta$ range over the roots of $f$ and $g$, respectively.

It is shown in \cite{brawley1987IrreduciblesComposedProduct} that if $(G,\diamond)$ forms a group, then the composed product $f\diamond g$ is irreducible if and only if $f$ and $g$ are irreducible and have coprime degrees. Furthermore, it was observed in \cite{brawley1987IrreduciblesComposedProduct} that the same property holds if $(G,\diamond)$ is a semigroup satisfying the cancellation rule. However, all diamond products that form cancellation semigroups are already groups. The authors of \cite{munemasa2016NoteBrawleyCarlitzTheorem} noticed that a weaker hypothesis on the cancellation property already implies the result. Indeed, it is enough when the so-called weak cancellation is satisfied:
\begin{align*}
	\alpha \diamond \beta = \alpha' \diamond \beta &\implies \alpha = \alpha',\\
	\alpha \diamond \beta = \alpha \diamond \beta' &\implies \beta = \beta',
\end{align*}
for all $\alpha,\alpha'\in\FFA_q(m)$ and $\beta,\beta'\in\FFA_q(n)$, where
\begin{equation}\label{eq:finite-field-gens}
	\FFA_q(n) = \{\alpha\in\FF_{q^n} \mid \FF_q(\alpha) = \FF_{q^n}\}
\end{equation}
and $\FFA_q(m)\cup\FFA_q(n)\subseteq G$.
The attentive reader of \cite{munemasa2016NoteBrawleyCarlitzTheorem} notices that in its proofs even a weaker cancellation property than the above one is used. This  was independently mentioned in \cite{irimagzi2023DiamondProductsEnsuring} and \cite{krompholz}. The proofs of
\cite{munemasa2016NoteBrawleyCarlitzTheorem} remain valid if the following cancellation rule is fulfilled.

\begin{definition}[Conjugate Cancellation on $G$]\label{Def:Conjugate-Cancellation-Original}
	Let $n,m$ be integers and $G\subseteq \FFC_q$ be a Frobenius invariant subset with $\FFA_q(m),\FFA_q(n)\subseteq G$. A diamond product $\diamond:G\times G\rightarrow\FFC_q$ is said to satisfy conjugate cancellation on $\FFA_q(m)\times\FFA_q(n)$ if for every integer $k$  it holds that:
	\begin{align*}
		\alpha \diamond \beta = \alpha^{q^k} \diamond \beta &\implies \alpha = \alpha^{q^k}\\
		\alpha \diamond \beta = \alpha \diamond \beta^{q^k} &\implies \beta = \beta^{q^k}
	\end{align*}
	where $\alpha \in \FFA_q(m)$, $\beta \in \FFA_q(n)$.
\end{definition}
In \cite{irimagzi2023DiamondProductsEnsuring} this property was called \emph{weaker cancellation}. We prefer to use, in our opinion, the more self-explanatory term \emph{conjugate cancellation}. In this paper, we extend the definition of conjugate cancellation by requiring that
the implications  hold only for integers $k$ dividing $\gcd(m,n)$, see \cref{Def:Conjugate-Cancellation}. This allows us to cover a larger set of diamond products, as explained in \cref{Sec:Opimality}.\\

{In this paper we study diamond products
satisfying conjugate cancellation and the associated composed products.  In particular in \cref{Thm:General-Brawley-Carlitz}, we show that the composed product of two irreducible polynomials of degrees $m$ and $n$ is irreducible  if and only if $m$ and $n$ are coprime and the involved diamond product satisfies   conjugate cancellation.
More generally, we show that a diamond product of $\alpha \in \FFA_q(m)$ and $\beta \in \FFA_q(n)$ satisfies conjugate cancellation if and only
if every irreducible factor of the composed product
of their minimal polynomials  has a degree $r$ satisfying $\lcm(m,n) = \lcm(m,r) =\lcm(n,r)$.
This is a direct consequence of \cref{th:g_values}
and more details on it are given in \cref{sub_sec:decopmosition}.
In Section~\ref{Sec:Construction}, we give several criteria that ensure that a diamond product defined by a
bivariate polynomial satisfies conjugate
cancellation. We give strong and general criteria in Theorem~\ref{Cor:Conj-Cancel-nice-coeffs}, as well as efficient algorithms that check these criteria in Section~\ref{Sec:Algorithm}. In Section~\ref{Sec:Normal}, we investigate diamond products constructed via linearized polynomials, where we can use normal bases to achieve more general necessary conditions for such a diamond product to satisfy conjugate cancellation (Theorem~\ref{Lma:Coeff-Polys-Linear}). We also provide an adaptation of our previous algorithms for this case. Lastly, we consider when diamond products can be used to construct normal elements, and give some precise results for specific diamond products (Theorem~\ref{thm:albert}).}\\

\section{Factorization of composed products of polynomials}\label{Sec:Opimality}

Let $\Ga \subseteq\FFC_q$ be an arbitrary Frobenius invariant set. For  $m\in\NN$ we define
\[\Ga_m = \Ga \cap \FF_{q^m}\qquad \text{and} \qquad \Gap_m = \Ga\cap\FFA_q(m).\]
Note that $\Ga_m$ is finite and $\Ga_m\cap \Ga_n = \Ga_{\gcd(m,n)}$.
The set $\FFA_q(m)$, defined in \cref{eq:finite-field-gens}, consists of all elements of $\FF_{q^m}$ which do not belong to any proper subfield of the form $\FF_{q^k}$ with $k < m$. Therefore, the elements of $\FFA_q(m)$, and hence those from $\Gap_m$, have a minimal polynomial of degree $m$ over $\FF_q$.
Recall that the minimal polynomial $f$ of $\alpha \in \FFA_q(m)$ over $\FF_q$ factorizes as follows in $\FF_{q^m}[X]$
\[f = \prod_{i=0}^{m-1}(X-\alpha^{q^i}).\]

In this paper we consider diamond products $\diamond$ on $\Ga\times\Gb$, where $\Ga,\Gb\subseteq\FFC_q$ are arbitrary Frobenius invariant subsets.  In previous papers mainly the case $\Ga=\Gb=G$ was studied. We call $\diamond:\Ga\times\Gb\rightarrow\FFC_q$ a \emph{diamond product} if
\begin{equation} \label{eq:g_dimond}
	(\alpha\diamond\beta)^q=\alpha^q\diamond\beta^q
\end{equation}
for any $(\alpha,\beta)\in\Ga\times\Gb$. Note that $\alpha \diamond \beta \in \FF_{q^{\lcm(m,n)}}$
if $\alpha \in \Gap_m$ and $\beta \in \Gbp_n$.
Given $\alpha\in\Gap_m$ and $\beta \in \Gbp_n$ with minimal polynomials
\[f = \prod_{i=0}^{m-1}(X-\alpha^{q^i}) \text{ and } g= \prod_{j=0}^{n-1}(X-\beta^{q^j}),\]
resp., the composed product of the polynomials $f$ and $g$ is then defined as
\begin{equation} \label{eq:g_dppol}
	f\diamond g = \prod_{i=0}^{m-1}\prod_{j=0}^{n-1} (X-\alpha^{q^i}\diamond\beta^{q^j}) \in \FF_q[X].
\end{equation}
Hence, to compute $f\diamond g$ we need the $mn$ values $\alpha^{q^i}\diamond\beta^{q^j}$ of the diamond product.
Next we observe that these $mn$ values can be uniquely determined if we have the values of the diamond product
for $\gcd(m,n)$ many pairs $(\alpha^{q^i},\beta^{q^j})$.
Indeed, by the definition of the diamond product, the value of $\alpha^{q^u} \diamond \beta^{q^v}$ for a fixed pair $(u,v)$
determines the values for all $\alpha^{q^i}\diamond\beta^{q^j}$ for which there is a $t$ satisfying
\begin{equation}\label{eq:g_orbit}
	(\alpha^{q^i})^{q^t} = \alpha^{q^{i+t}}= \alpha^{q^u} \text{ and }  (\beta^{q^j})^{q^t} = \beta^{q^{j+t}}= \beta^{q^v}.
\end{equation}
If \cref{eq:g_orbit} holds, we say that $(\alpha^{q^u}, \beta^{q^v})$ and $(\alpha^{q^i}, \beta^{q^j})$ belong to the same orbit.
Since
$\alpha^{q^m}=\alpha$ and $\beta^{q^n } = \beta$, we may consider, abusing the notation, the exponents $i$ and $j$ as
elements of $\mathbb{Z}_m$ and $\mathbb{Z}_n$, resp., or $(i,j)$ as an element of $\mathbb{Z}_m \times \mathbb{Z}_n$.
Then simultaneous taking $\alpha^{q^i}$ and $\beta^{q^j}$ to the $q$-th power translates to adding $(1,1)$ to $(i,j)$
in $\mathbb{Z}_m \times \mathbb{Z}_n$. Thus, $(\alpha^{q^u}, \beta^{q^v})$ and $(\alpha^{q^i}, \beta^{q^j})$ belong to the same orbit
if and only if $(u,v)$ and $(i,j)$ belong to the same coset of the cyclic subgroup $U =\langle(1,1)\rangle$ in
$\mathbb{Z}_m \times \mathbb{Z}_n$.
Since the order of $U$ is $\lcm(m,n)$, the length of any orbit is also $\lcm(m,n)$.
The number of cosets of $U$ in $\mathbb{Z}_m \times \mathbb{Z}_n$ is $\gcd(m,n)$, which is then also the number of orbits.
Consequently, the values of the diamond product on any set of representatives of the orbits will determine
those for all $mn$ pairs of conjugates of $\alpha$ and $\beta$.

The generalized Chinese Remainder Theorem (see, for example, \cite[Theorem 2.4.1]{ding1996ChineseRemainderTheorem})
yields the characterization of elements of $\mathbb{Z}_m \times \mathbb{Z}_n$ belonging to the same coset of $U = \langle(1,1)\rangle$,
as stated in \cref{g_orbits}.

\begin{theorem}[Generalized Chinese Remainder Theorem]\label{g:CRT}
	Let $m,n$ be two positive integers. Then the system of congruences
	\[x \equiv a_1\pmod m,\qquad x \equiv a_2 \pmod n\]
	has solutions if and only if $\gcd(m,n)\mid a_1-a_2$. Under this condition, the above system has only one solution modulo $\lcm(m,n)$.
\end{theorem}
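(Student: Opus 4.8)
The plan is to prove the two directions of the equivalence separately and then settle the uniqueness claim, using nothing beyond elementary divisibility and B\'ezout's identity; since the result is classical, I expect the argument to be essentially the textbook one.

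First I would prove necessity. Suppose some $x\in\ZZ$ satisfies both congruences. Then $x=a_1+mk$ and $x=a_2+n\ell$ for suitable $k,\ell\in\ZZ$, so $a_1-a_2=n\ell-mk$. As $\gcd(m,n)$ divides both $m$ and $n$, it divides the right-hand side, whence $\gcd(m,n)\mid a_1-a_2$.

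Next, for sufficiency, set $d=\gcd(m,n)$ and assume $d\mid a_1-a_2$, say $a_1-a_2=dt$ with $t\in\ZZ$. By B\'ezout's identity choose $u,v\in\ZZ$ with $um+vn=d$, and multiply through by $t$ to get $umt+vnt=a_1-a_2$. Then $x:=a_1-umt=a_2+vnt$ is a well-defined integer; since $umt=m(ut)$ is a multiple of $m$ we have $x\equiv a_1\pmod m$, and since $vnt=n(vt)$ is a multiple of $n$ we have $x\equiv a_2\pmod n$. This exhibits the required solution.

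Finally I would establish uniqueness modulo $\lcm(m,n)$: if $x$ and $x'$ both solve the system, then $m\mid x-x'$ and $n\mid x-x'$, hence $\lcm(m,n)\mid x-x'$; conversely, $x+c\lcm(m,n)$ is again a solution for every $c\in\ZZ$. Thus the solution set is exactly one residue class modulo $\lcm(m,n)$. The only point that needs a moment's care is checking that $t=(a_1-a_2)/d$ is a genuine integer, which is precisely the divisibility hypothesis; beyond that I do not anticipate any real obstacle. (Alternatively one may simply cite a standard reference, as the paper does.)
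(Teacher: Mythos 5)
Your proof is correct and complete: necessity follows from $\gcd(m,n)\mid m$ and $\gcd(m,n)\mid n$, sufficiency from B\'ezout applied to $t=(a_1-a_2)/\gcd(m,n)$, and uniqueness from the fact that a common multiple of $m$ and $n$ is a multiple of $\lcm(m,n)$. The paper itself offers no proof of this statement --- it simply cites a standard reference --- so there is nothing to compare against; your argument is exactly the textbook one that such a reference would contain.
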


Combining our above discussions with the generalized Chinese Remainder Theorem we get:

\begin{lemma}\label{g_orbits}
	Let $\alpha \in \FFA_q(m)$ and $\beta \in \FFA_q(n)$.
	\begin{itemize}
		\item A diamond product on the set
		$\{ (\alpha^{q^i},\beta^{q^j})\,|\, 0\leq i\leq m-1, 0\leq j \leq n-1\}$ is uniquely determined by its values on the subset
		$\{ (\alpha^{q^i},\beta^{q^j})\,|\, (i,j) \in \mathcal{R}\}$, where $\mathcal{R}$ is a set of representatives of the
		cosets of $U = \langle(1,1)\rangle$ in $\mathbb{Z}_m \times \mathbb{Z}_n$. The sets
		$\{ (0,j) : 0\leq j \leq \gcd(m,n)-1 \}$ and $\{ (i,0) : 0\leq i \leq \gcd(m,n)-1 \}$ are such sets of representatives.
		\item For integers $u,v,i,j$, the pairs $(\alpha^{q^u}, \beta^{q^v})$ and $(\alpha^{q^i}, \beta^{q^j})$ are in the same orbit if and only if $\gcd(m,n)$ divides $(u-i)+(v-j)$.
	\end{itemize}
\end{lemma}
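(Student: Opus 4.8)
The plan is to transport the whole statement into the additive group $\mathbb{Z}_m\times\mathbb{Z}_n$, where --- as set up in the paragraph preceding the lemma --- the pair of conjugates $(\alpha^{q^i},\beta^{q^j})$ corresponds to $(i,j)$ and raising both coordinates to the $q$-th power corresponds to adding $(1,1)$, and then to feed the resulting congruence systems into \cref{g:CRT}. For the first bullet I would start with the ``uniquely determined'' claim. Given $(i,j)$ with $0\le i\le m-1$ and $0\le j\le n-1$, let $(u,v)\in\mathcal R$ be the representative of the coset $(i,j)+U$; then $(i,j)=(u,v)+t(1,1)$ in $\mathbb{Z}_m\times\mathbb{Z}_n$ for some integer $t\ge 0$, i.e.\ $i\equiv u+t\pmod m$ and $j\equiv v+t\pmod n$, so $\alpha^{q^i}=(\alpha^{q^u})^{q^t}$ and $\beta^{q^j}=(\beta^{q^v})^{q^t}$. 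Iterating the diamond relation \cref{eq:g_dimond} then gives $\alpha^{q^i}\diamond\beta^{q^j}=(\alpha^{q^u}\diamond\beta^{q^v})^{q^t}=\sigma^t(\alpha^{q^u}\diamond\beta^{q^v})$. Thus every one of the $mn$ values of the diamond product on conjugates is obtained by applying a power of $\sigma$ to its value at a representative in $\mathcal R$, which is precisely the assertion.

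Next I would check that the two displayed sets really are complete and irredundant systems of representatives. Since $U=\langle(1,1)\rangle$ has order $\lcm(m,n)$, there are $mn/\lcm(m,n)=\gcd(m,n)$ cosets of $U$ in $\mathbb{Z}_m\times\mathbb{Z}_n$, which already matches $|\{(0,j):0\le j\le\gcd(m,n)-1\}|$; hence it is enough to show that these $\gcd(m,n)$ elements lie in pairwise distinct cosets. If $(0,j_1)$ and $(0,j_2)$ lay in the same coset, then $(0,j_1-j_2)\in U$, so there is $t$ with $t\equiv 0\pmod m$ and $t\equiv j_1-j_2\pmod n$; the first congruence forces $\gcd(m,n)\mid t$, hence $\gcd(m,n)\mid j_1-j_2$, and since $|j_1-j_2|<\gcd(m,n)$ we get $j_1=j_2$. (Alternatively this is immediate from the solvability criterion in \cref{g:CRT}.) The argument for $\{(i,0):0\le i\le\gcd(m,n)-1\}$ is the same with the roles of $m$ and $n$ interchanged.

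For the second bullet, recall from the discussion preceding the lemma that $(\alpha^{q^u},\beta^{q^v})$ and $(\alpha^{q^i},\beta^{q^j})$ lie in the same orbit if and only if $(u,v)$ and $(i,j)$ lie in the same coset of $U$, that is, if and only if there is an integer $t$ with $u\equiv i+t\pmod m$ and $v\equiv j+t\pmod n$. Viewing this as the system $t\equiv u-i\pmod m$, $t\equiv v-j\pmod n$ in the unknown $t$, \cref{g:CRT} tells us it is solvable exactly when $\gcd(m,n)\mid (u-i)-(v-j)$, which is the divisibility condition characterizing membership in a common orbit. None of these steps is genuinely deep: the only point that needs a little care is the cardinality count in the middle paragraph, which is what upgrades the easy observation ``every pair reduces, within its orbit, to one whose first (resp.\ second) coordinate lies in $\{0,\dots,\gcd(m,n)-1\}$'' to the full claim that the two displayed sets each meet every orbit exactly once.
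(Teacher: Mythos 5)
Your proof is correct and follows essentially the same route as the paper, which derives the lemma by combining the orbit--coset correspondence in $\mathbb{Z}_m\times\mathbb{Z}_n$ (set up in the discussion preceding the statement) with the generalized Chinese Remainder Theorem; the paper gives no further proof beyond that remark, and your write-up simply makes the coset count and the representative check explicit. One point deserves attention: for the second bullet your derivation correctly yields solvability of $t\equiv u-i\pmod m$, $t\equiv v-j\pmod n$ exactly when $\gcd(m,n)\mid (u-i)-(v-j)$, whereas the lemma as printed says $\gcd(m,n)\mid (u-i)+(v-j)$. These conditions are not equivalent in general (e.g.\ $m=3$, $n=6$, $(u,v)=(0,0)$, $(i,j)=(1,2)$: the sum $-3$ is divisible by $3$ but $(1,2)\notin\langle(1,1)\rangle$), so the printed statement contains a sign slip; your version is the correct one, and the discrepancy is harmless for the rest of the paper since the lemma is only invoked with $v=j$ or $u=i$, where the two conditions coincide. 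You should, however, have flagged rather than silently absorbed the mismatch between what you derived and what the statement asserts.
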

As already observed in Theorem 2.1 of \cite{mills2001FactorizationsRootbasedPolynomial} and its proof,  \Cref{g_orbits} and the discussions preceding it imply that irreducible factors
of $f\diamond g$ over $\FF_q$ are the minimal polynomials
of $\alpha \diamond \beta^{q^j}$ for $0\leq j \leq \gcd(m,n)$. In particular, the composed product $f\diamond g$ has at most $\gcd(m,n)$ different irreducible factors over $\FF_q$ and the degree of each irreducible factor is at most $\lcm(m,n)$.

Two natural questions arise in the study of diamond products:
\begin{enumerate}
    \item For a fixed diamond product $\diamond$, what is the set of polynomials $f,g$ such that $f\diamond g$ has a special type of factorization?
    \item For $f,g\in\FF_q[X]$ fixed, what is the set of diamond products such that $f\diamond g$ has a special type of factorization?
\end{enumerate}
Defining  diamond products on general sets $\Ga\times\Gb$ allows us to tackle both questions.
The first question is about a characterization of sets $\Ga$ and $\Gb$ for a given $\diamond$ such that the composed product of
the minimal polynomials of their elements  have a desired factorization. The second question is about the characterization
of diamond product leading to a desired factorization of $f\diamond g$ over $\FF_q$ for
a fixed pair $f,g\in\FF_q[X]$, or equivalently for the fixed sets $\Ga = \{\alpha,\alpha^q,\dots,\alpha^{q^{m-1}}\}$ and $\Gb=\{\beta,\beta^q,\dots,\beta^{q^{n-1}}\}$ of zeros of $f$ and $g$.

In this paper we study the above two questions for the diamond products satisfying
the following  conjugate cancellation rule.

\begin{definition}[Conjugate Cancellation on arbitrary sets]\label{Def:Conjugate-Cancellation}
	Let $n,m \geq 1$ be integers and $\Ga,\Gb\subseteq\FFC_q$ be Frobenius invariant sets. A diamond product $\diamond:\Ga\times\Gb\rightarrow\FFC_q$ is said to satisfy conjugate cancellation on $\Gap_m\times\Gbp_n$ if for every integer $k$ with $\gcd(m,n)\mid k$ and
	all $\alpha \in \Gap_m$, $\beta \in \Gbp_n$ it holds that
	\begin{align}
		\alpha \diamond \beta = \alpha^{q^k} \diamond \beta &\implies \alpha = \alpha^{q^k}, \label{Equ:CC-Diam-Prod-a}\\
		\alpha \diamond \beta = \alpha \diamond \beta^{q^k} &\implies \beta = \beta^{q^k} \label{Equ:CC-Diam-Prod-b}.
	\end{align}
	%where $\alpha \in \Gap_m$, $\beta \in \Gbp_n$.
\end{definition}
The choice  $\Ga = \FFA_q(m)$ and $\Gb = \FFA_q(n)$ with $\gcd(n,m)=1$ yields the definition of conjugate cancellation given in the introduction.
We say that the diamond product satisfies conjugate cancellation for $\alpha \in \Gap_m$ and $\beta \in \Gbp_n$, if conjugate cancellation
holds on $\Ga = \{\alpha,\alpha^q,\dots,\alpha^{q^{m-1}}\}$ and $\Gb=\{\beta,\beta^q,\dots,\beta^{q^{n-1}}\}$. By \Cref{g_orbits}~(ii), the pairs $(\alpha, \beta), (\alpha^{q^k},\beta)$ appearing in \Cref{Equ:CC-Diam-Prod-a} belong to the same orbit.
The same holds for pairs in \Cref{Equ:CC-Diam-Prod-b}. Hence, the implications in  \Cref{Equ:CC-Diam-Prod-a} and \Cref{Equ:CC-Diam-Prod-b}  need  to be checked only for pairs from the same orbit. It is worth to note that this would be not anymore the case  without the condition
$\gcd(m,n)\mid k$ in \cref{Def:Conjugate-Cancellation}.

The next example shows that  conjugate cancellation does not coincide with the concept of weak cancellation introduced in \cite{munemasa2016NoteBrawleyCarlitzTheorem}.

\begin{example}\label{Ex:CC-Less-Strict}
	Let $q = 2$ and $m = 2$ and $n = 3$. Take $f=X^2+X+1$ and $g = X^3+X+1$ and let $\alpha,\beta$ be zeros of $f,g$ respectively. Notice that the other two conjugates of $\beta$ are given by $\beta^2$ and $\beta(\beta+1)$. For $\FF_{2^6} \times \FF_{2^6}$, we define a diamond product by $x\diamond y = x\cdot u(y)$, where $u(y) = y(y+1)$. Since $u(\beta) = u(\beta+1)$, we have  $\alpha\diamond\beta = \alpha\diamond(\beta+1)$. As $\beta\neq\beta+1$, the diamond product $\diamond$ does not satisfy the weak cancellation. However, since $\beta$ and $\beta+1$ are not conjugates, this is not a contradiction to conjugate cancellation. We postpone the proof that this diamond product indeed satisfies conjugate cancellation on $\FF_{2^6} \times \FF_{2^6}$ to \Cref{Sec:Construction-Simple}.
\end{example}

The following theorem is a key step for understanding the diamond products satisfying conjugate cancellation.
\begin{theorem}\label{th:g_values}
	Let $\diamond:\Ga\times \Gb\rightarrow \FFC_q$ be a diamond product. Then it satisfies conjugate cancellation for $\alpha\in\Gap_m$ and $\beta\in\Gbp_n$ if and only if for every $0\leq j \leq  \gcd(m,n)-1$
	\[\FF_q(\alpha, \alpha \diamond \beta^{q^j}) = \FF_q(\beta, \alpha \diamond \beta^{q^j}) =\FF_q(\alpha, \beta)\]
    holds.
	Equivalently, the diamond product satisfies conjugate cancellation for $\alpha\in\Gap_m$ and $\beta\in\Gbp_n$ if and only if for every $0\leq j \leq \gcd(m,n)-1$ we have
	$\alpha\diamond\beta^{q^j} \in\FFA_q(r_j)$ with $r_j$ satisfying
	\[\lcm(m,n) = \lcm(n,r_j) = \lcm(m,r_j).\]
\end{theorem}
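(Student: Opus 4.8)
The plan is to fix $\alpha\in\Gap_m$ and $\beta\in\Gbp_n$ and recast the whole statement in terms of the $d:=\gcd(m,n)$ values $\gamma_j:=\alpha\diamond\beta^{q^j}$, $0\le j\le d-1$, which by the discussion following \cref{g_orbits} are roots of the (at most $d$) irreducible factors of $f\diamond g$. Put $L:=\lcm(m,n)$ and $r_j:=[\FF_q(\gamma_j):\FF_q]$, the degree of the corresponding irreducible factor; since $\gamma_j\in\FF_{q^L}$ by the remark after \cref{eq:g_dppol}, we have $r_j\mid L$, and $\gamma_j\in\FFA_q(r_j)$ by definition of $r_j$. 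First I would settle the equivalence of the two formulations of the right-hand side: as $\FF_q(\alpha)=\FF_{q^m}$, $\FF_q(\beta)=\FF_{q^n}$ and $\FF_q(\alpha,\beta)=\FF_{q^L}$, the compositum of finite subfields of $\FFC_q$ gives $\FF_q(\alpha,\gamma_j)=\FF_{q^{\lcm(m,r_j)}}$ and $\FF_q(\beta,\gamma_j)=\FF_{q^{\lcm(n,r_j)}}$, so $\FF_q(\alpha,\gamma_j)=\FF_q(\beta,\gamma_j)=\FF_q(\alpha,\beta)$ holds exactly when $\lcm(m,r_j)=\lcm(n,r_j)=L$. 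It then remains to prove that $\diamond$ satisfies conjugate cancellation for $(\alpha,\beta)$ if and only if $\lcm(m,r_j)=\lcm(n,r_j)=L$ for every $j$.

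For this equivalence I would translate the two implications in \cref{Def:Conjugate-Cancellation} into divisibility conditions on the $r_j$. The first step is to rewrite \cref{Equ:CC-Diam-Prod-a}, applied to all conjugates $\alpha^{q^a},\beta^{q^b}$: applying $\sigma^{-b}$ to both the hypothesis and the conclusion and using the diamond identity \cref{eq:g_dimond} shows it is equivalent to ``for all integers $a$ and all $k$ with $d\mid k$, $\alpha^{q^a}\diamond\beta=\alpha^{q^{a+k}}\diamond\beta$ implies $m\mid k$'', and symmetrically \cref{Equ:CC-Diam-Prod-b} becomes ``for all $b$ and all $k$ with $d\mid k$, $\alpha\diamond\beta^{q^b}=\alpha\diamond\beta^{q^{b+k}}$ implies $n\mid k$''. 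Next, for fixed $a$ and $k$ with $d\mid k$, \cref{g_orbits} says $(\alpha^{q^a},\beta)$ lies in the orbit of a unique $(\alpha,\beta^{q^j})$ with $0\le j\le d-1$, and since $d\mid k$ the pair $(\alpha^{q^{a+k}},\beta)$ lies in the same orbit; writing out the orbit relations and applying \cref{eq:g_dimond} produces integers $t,t'$ with $\alpha^{q^a}\diamond\beta=\gamma_j^{q^{t}}$, $\alpha^{q^{a+k}}\diamond\beta=\gamma_j^{q^{t'}}$, $t\equiv a\pmod m$, $t'\equiv a+k\pmod m$ and $t\equiv t'\equiv -j\pmod n$, so that $s:=t'-t$ satisfies $s\equiv k\pmod m$ and $n\mid s$. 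Since $\FF_q(\gamma_j)=\FF_{q^{r_j}}$, the hypothesis $\alpha^{q^a}\diamond\beta=\alpha^{q^{a+k}}\diamond\beta$ is equivalent to $\gamma_j^{q^{s}}=\gamma_j$, i.e.\ $r_j\mid s$, while the conclusion $m\mid k$ is equivalent to $m\mid s$, which together with $n\mid s$ means $L\mid s$. Finally, as $a$ runs over all integers the corresponding $j$ runs over all of $\{0,\dots,d-1\}$, and by \cref{g:CRT} the residues $s\bmod L$ obtained as $k$ runs over the multiples of $d$ are exactly the multiples of $n$; hence \cref{Equ:CC-Diam-Prod-a} is equivalent to ``for every $j$ and every $s$ with $n\mid s$, $r_j\mid s$ implies $L\mid s$'', which, taking $s=\lcm(n,r_j)$ and recalling $\lcm(n,r_j)\mid L$, is exactly $\lcm(n,r_j)=L$ for all $j$. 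The mirror computation turns \cref{Equ:CC-Diam-Prod-b} into $\lcm(m,r_j)=L$ for all $j$, and combining the two finishes the proof.

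The part I expect to be the main obstacle is the bookkeeping with quantifiers and orbits, not any single computation: one must check that the two rewritings in the first step are genuine equivalences (their converse directions invoke \cref{eq:g_dimond} again); that every $j\in\{0,\dots,d-1\}$ is realized by some admissible $a$, so that the universal quantifier over conjugates in \cref{Def:Conjugate-Cancellation} genuinely controls all of $\gamma_0,\dots,\gamma_{d-1}$; and that the passage from $k$ to $s\bmod L$ really does surject onto the multiples of $n$, which is where \cref{g:CRT} is used. Once these are pinned down, the rest is the elementary fact that an element of $\FF_{q^L}$ is fixed by $\sigma^{s}$ exactly when its degree over $\FF_q$ divides $s$, together with the trivial divisibilities $\lcm(m,r_j)\mid L$ and $\lcm(n,r_j)\mid L$.
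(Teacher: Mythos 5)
Your proposal is correct and follows essentially the same route as the paper's proof: reduce to the orbit representatives $(\alpha,\beta^{q^j})$, use the orbit structure together with the generalized Chinese Remainder Theorem (\cref{g:CRT}) to turn the cancellation implications into the divisibility conditions $m\mid\lcm(n,r_j)$ and $n\mid\lcm(m,r_j)$, and combine these with $r_j\mid\lcm(m,n)$. You are somewhat more explicit than the paper about two points it dispatches with ``using symmetry'' --- the reduction of the quantifier over all conjugate pairs to the representatives $(\alpha,\beta^{q^j})$, and the equivalence between the field-theoretic and the $\lcm$ formulations via the compositum $\FF_q(\gamma_j,\alpha)=\FF_{q^{\lcm(r_j,m)}}$ --- but the substance is the same.
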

\begin{proof}
	Using symmetry, we prove the statement only for $j=0$. Suppose $\alpha \diamond \beta \in \FF_{q^r}$. Then
	$r$ is a divisor of $\lcm(m,n)$ since
	\begin{equation*}
		(\alpha \diamond \beta)^{q^{\lcm(m,n)}}=\alpha^{q^{\lcm(m,n)}}\diamond \beta^{q^{\lcm(m,n)}}=\alpha \diamond \beta.
	\end{equation*}
	Suppose we have
	\begin{equation}\label{g:eq_k}
		\alpha \diamond \beta = \alpha^{q^{k}} \diamond \beta
	\end{equation}
	for some $k$ which is divisible by $\gcd(m,n)$. By \Cref{g_orbits}~(ii),
	$(\alpha, \beta)$ and $(\alpha^{q^{k}}, \beta)$ are in the same orbit of the diamond product. Hence, there is an integer $t$ satisfying
	\[\alpha \diamond \beta = \alpha^{q^{k}} \diamond \beta = (\alpha \diamond \beta)^{q^t} = \alpha^{q^t} \diamond \beta^{q^t}.\]
	The equality $\alpha \diamond \beta = (\alpha \diamond \beta)^{q^t}$ holds if and only if $t \equiv 0 \pmod r$.
	Further, $\alpha^{q^t} = \alpha^{q^k}$ if and only if $t\equiv k \pmod m$. And finally,
	$\beta^{q^t} = \beta$ if and only if   $t \equiv 0 \pmod n$.
	These three congruences are equivalent to
	\[t\equiv k\pmod m, ~~~~~t \equiv 0\pmod{\lcm(n,r)}.\]
	By \cref{g:CRT}, the latter congruences have a solution if and only if
	$\gcd(m, \lcm(n,r))$ divides $k$. This shows that we find a $k$   fulfilling \cref{g:eq_k} and $\alpha^{q^k} \ne \alpha$ (and hence
	violating the implication in \cref{Equ:CC-Diam-Prod-a})
	if and only if $\gcd(m, \lcm(n,r)) \ne m$.
	Similar arguments show that the equation $\alpha \diamond \beta^{q^k} = \alpha \diamond \beta$ holds
	for some $k$ with $\beta^{q^k} \ne \beta$ if and only if $\gcd(n, \lcm(m,r)) \ne n$.
	Combining these two observations, we get that conjugate cancellation  holds on the orbit of
	$(\alpha, \beta)$ if and only if $m$ divides $\lcm(n,r)$ and $n$ divides $\lcm(m,r)$.
	Recall that $r$ must divide $\lcm(m,n)$, too. Now it is easy to see that these divisibility
	properties imply
	\[\lcm(m,n) = \lcm(n,r) = \lcm(m,r),\]
	completing the proof.
\end{proof}

\Cref{th:g_values} generalizes Theorem 3.1  from \cite{mills2001FactorizationsRootbasedPolynomial}, which describes possible degrees of irreducible factors of
composed products induced by a diamond product defining a
group structure on a set $G$. Recall that such a diamond product satisfies conjugate cancellation, but the converse
is not true. It is worth to note, that the set $H_{m,n}$,
appearing in \cite{mills2001FactorizationsRootbasedPolynomial} having a rather technical definition coincides
with the set of permitted values $r$ in \Cref{th:g_values}.

\subsection{Irreducible diamond products of polynomials}
In order for the composed product of the minimal polynomials of $\alpha \in \FFA_q(m)$ and $\beta \in\FFA_q(n)$
to be irreducible, it is necessary that $\gcd(m,n)=1$, since $\alpha \diamond \beta \in \FF_{q^{\lcm(n,m)}}$. The results from
\cite{brawley1987IrreduciblesComposedProduct,munemasa2016NoteBrawleyCarlitzTheorem, irimagzi2023DiamondProductsEnsuring, krompholz} show that conjugate cancellation is a sufficient condition
for constructing irreducible
polynomials using diamond products. \cref{th:g_values} implies that
it is a necessary condition as well:

\begin{corollary}\label{g:cor-irred}
	Let $\gcd(m,n)=1$. Then a diamond product satisfies conjugate cancellation for $\alpha \in \FFA_q(m)$ and $\beta\in\FFA_q(n)$ if and only
	$\alpha \diamond \beta \in \FFA_q(mn)$.
\end{corollary}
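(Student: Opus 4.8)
The plan is to read the corollary off \Cref{th:g_values}. First I would observe that when $\gcd(m,n)=1$, the index $j$ in \Cref{th:g_values} ranges over the single value $j=0$. Hence the diamond product satisfies conjugate cancellation for $\alpha\in\FFA_q(m)$ and $\beta\in\FFA_q(n)$ if and only if $\alpha\diamond\beta\in\FFA_q(r)$ for some $r$ with
\[
\lcm(m,n)=\lcm(n,r)=\lcm(m,r).
\]
Since $\gcd(m,n)=1$ gives $\lcm(m,n)=mn$, the whole corollary reduces to the purely number-theoretic claim that, for coprime $m,n$, the conditions $\lcm(m,r)=\lcm(n,r)=mn$ are equivalent to $r=mn$.

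For the claim, one direction is immediate: if $r=mn$ then, using $\gcd(m,n)=1$, both $\lcm(m,r)$ and $\lcm(n,r)$ equal $mn$. For the converse I would argue as follows. The equality $\lcm(m,r)=mn$ forces $r\mid mn$; because $m$ and $n$ are coprime, the prime-power factors of $r$ split into those dividing $m$ and those dividing $n$, so $r=\gcd(r,m)\cdot\gcd(r,n)$. Write $r_1=\gcd(r,m)$ and $r_2=\gcd(r,n)$. Then $\gcd(n,r)=r_2$ and $\gcd(m,r)=r_1$, so $\lcm(n,r)=nr/r_2=nr_1$ and $\lcm(m,r)=mr_2$. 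The hypotheses $nr_1=mn$ and $mr_2=mn$ yield $r_1=m$ and $r_2=n$, hence $r=r_1r_2=mn$. Substituting back, conjugate cancellation holds for $\alpha,\beta$ exactly when $\alpha\diamond\beta\in\FFA_q(mn)$.

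There is no real obstacle here; this is a direct specialization of \Cref{th:g_values}. The only step requiring a bit of care is the elementary divisibility argument above, in particular the multiplicative splitting $r=\gcd(r,m)\gcd(r,n)$, which is valid precisely because $m$ and $n$ are coprime — exactly the hypothesis of the corollary.
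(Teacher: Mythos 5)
Your proof is correct and follows the same route as the paper: both specialize \cref{th:g_values} to the single orbit $j=0$ and then identify the admissible degrees $r$. The only difference is that you spell out the elementary verification that $\lcm(m,r)=\lcm(n,r)=mn$ forces $r=mn$ for coprime $m,n$, a detail the paper leaves implicit.
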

\begin{proof}
	The statement is a direct consequence of \cref{th:g_values}, since in this case only one orbit exists and $r=mn$ satisfies
	$\lcm(r,m) = \lcm(r,n)=\lcm(m,n)$.
\end{proof}

In the following theorem we summarize the obtained results in the case $\gcd(n,m)=1$, leading
to constructions of irreducible polynomials.

\begin{theorem}[Generalized Brawley--Carlitz--Theorem]\label{Thm:General-Brawley-Carlitz}
	Let $m,n\in\NN$ and let $\diamond:\Ga\times\Gb\rightarrow\FFC_q$ be a diamond product. Then the following statements are equivalent:
	\begin{enumerate}
		\item The element $\alpha\diamond\beta$ belongs to $\FFA_q(mn)$ for any $\alpha\in\Gap_m$ and $\beta\in\Gbp_n$.
		\item The polynomial $f\diamond g$ is irreducible over $\FF_q$ for any
		irreducible $f,g\in\FF_q[X]$ with their  zeros belonging to $\Gap_m$ and $\Gbp_n$, resp.
		\item $\gcd(m,n) = 1$ and $\diamond$ satisfies conjugate cancellation on $\Gap_m\times\Gbp_n$.
	\end{enumerate}
\end{theorem}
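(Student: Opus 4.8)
The plan is to prove the equivalences by a cyclic chain $(1)\Rightarrow(2)\Rightarrow(3)\Rightarrow(1)$, exploiting the already-established \cref{th:g_values} and \cref{g:cor-irred}, together with the orbit analysis from \cref{g_orbits}. The implication $(3)\Rightarrow(1)$ is immediate: assuming $\gcd(m,n)=1$ and conjugate cancellation on $\Gap_m\times\Gbp_n$, \cref{g:cor-irred} gives at once that $\alpha\diamond\beta\in\FFA_q(mn)$ for every $\alpha\in\Gap_m$, $\beta\in\Gbp_n$. For $(1)\Rightarrow(2)$, recall from the discussion following \cref{g_orbits} that the irreducible factors of $f\diamond g$ over $\FF_q$ are precisely the minimal polynomials of the elements $\alpha\diamond\beta^{q^j}$ for $0\leq j\leq\gcd(m,n)-1$; if each such element lies in $\FFA_q(mn)$ — which, when $\gcd(m,n)=1$, forces $mn=\deg(f\diamond g)$ — then $f\diamond g$ has a single irreducible factor of full degree $mn$, hence is irreducible. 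One subtlety to address here: $(1)$ a priori does not mention $\gcd(m,n)=1$, but if $\gcd(m,n)=d>1$ then $\alpha\diamond\beta\in\FF_{q^{\lcm(m,n)}}$ and $\lcm(m,n)<mn$, so $\alpha\diamond\beta$ cannot generate $\FF_{q^{mn}}$; thus $(1)$ already forces coprimality, and the argument goes through.

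The step that requires the most care is $(2)\Rightarrow(3)$. First, irreducibility of $f\diamond g$ of degree $mn$ forces $\gcd(m,n)=1$: otherwise, as above, $\deg(f\diamond g)=mn$ but $\alpha\diamond\beta^{q^j}\in\FF_{q^{\lcm(m,n)}}$ with $\lcm(m,n)<mn$, so no irreducible factor can have degree $mn$, contradicting irreducibility. Once $\gcd(m,n)=1$ is known, there is exactly one orbit, so $f\diamond g$ irreducible of degree $mn$ means $\alpha\diamond\beta\in\FFA_q(mn)$ for every choice of $\alpha\in\Gap_m$, $\beta\in\Gbp_n$; now \cref{g:cor-irred} (in its ``only if'' direction) yields conjugate cancellation on $\Gap_m\times\Gbp_n$. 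The only genuine obstacle is making sure the quantifiers match: statement $(2)$ quantifies over all irreducible $f,g$ with zeros in the prescribed sets, which corresponds exactly to quantifying over all $\alpha\in\Gap_m$, $\beta\in\Gbp_n$ (the zeros of an irreducible polynomial of degree $m$ in $\FF_q[X]$ lying in $\Ga$ are exactly a full Frobenius orbit inside $\Gap_m$), so the universally-quantified hypothesis $(2)$ translates faithfully into the universally-quantified conclusion $(1)$, and \cref{g:cor-irred} then upgrades $(1)$ to $(3)$.

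In summary, the proof is essentially a bookkeeping exercise layered on top of \cref{th:g_values} and \cref{g:cor-irred}: the mathematical content is already contained in those results, and what remains is to (i) observe that irreducibility of a composed product of degree-$m$ and degree-$n$ polynomials forces $\gcd(m,n)=1$, and (ii) translate between the polynomial language of $(2)$ and the root language of $(1)$ and $(3)$ via the orbit description of irreducible factors. I would present the chain $(1)\Rightarrow(2)\Rightarrow(3)\Rightarrow(1)$ explicitly but briefly, citing \cref{g:cor-irred} for the $\gcd(m,n)=1$ cases and the paragraph after \cref{g_orbits} for the factor structure, and flag the coprimality deductions as the one place where a short independent argument is needed.
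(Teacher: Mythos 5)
Your proof is correct and follows essentially the same route the paper intends: the theorem is stated there as a summary of \cref{th:g_values}, \cref{g:cor-irred} and the orbit description in \cref{g_orbits}, and your cyclic chain $(1)\Rightarrow(2)\Rightarrow(3)\Rightarrow(1)$ assembles exactly those ingredients, including the short independent observation that membership of $\alpha\diamond\beta$ in $\FFA_q(mn)$ (or irreducibility of $f\diamond g$) forces $\gcd(m,n)=1$. Nothing further is needed.
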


Next we present some observations on the factorization of irreducible composed products over  intermediate fields.
These results are motivated by the following lemma.%\namecref{Lma:irred-factor-over-ext-field}.

\begin{lemma}\label{Lma:irred-factor-over-ext-field}\cite{kyuregyan2011IrreducibleCompositionsPolynomials}
	A polynomial $f\in\FF_q[X]$ of degree $m = dk$ is irreducible over $\FF_q$ if and only if there is a monic irreducible polynomial $f_0(X) = X^d + \sum_{j=0}^{d-1} v_jX^j$ over $\FF_{q^k}$ such that $\FF_q(v_0,\dots,v_{d-1}) = \FF_{q^k}$ and
	\begin{equation}\label{eq:g_fact_inf}
	f(X) = \prod_{\mu=0}^{k-1}f_0^{(\mu)}(X) \in \FF_{q^k}[X],
	\end{equation}
	where $f_0^{(\mu)}$ is defined as
	\[f_0^{(\mu)}=X^d + \sum_{j=0}^{d-1} v_j^{q^\mu}X^j. \]
\end{lemma}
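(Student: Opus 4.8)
The plan is to argue entirely through roots in $\FFC_q$, using the tower $\FF_q \subseteq \FF_{q^k} \subseteq \FF_{q^m}$ with $m = dk$. Write $\sigma\colon x\mapsto x^q$ for the Frobenius; it fixes $\FF_q$, generates $\mathrm{Gal}(\FF_{q^k}/\FF_q)$, and $f_0^{(\mu)}$ is exactly the image of $f_0$ under applying $\sigma^\mu$ to the coefficients. I would use two elementary facts repeatedly: for $\ell\mid N$ the minimal polynomial of $\gamma\in\FF_{q^N}$ over $\FF_{q^\ell}$ has degree $[\FF_{q^\ell}(\gamma):\FF_{q^\ell}]$, and $\FF_{q^a}(\gamma)=\FF_{q^{\lcm(a,e)}}$ where $\FF_q(\gamma)=\FF_{q^e}$.

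For the forward implication I would take $f$ irreducible over $\FF_q$ and a root $\alpha$, so $\FF_q(\alpha)=\FF_{q^m}$; since $k\mid m$ one has $[\FF_{q^m}:\FF_{q^k}]=d$, so the minimal polynomial $f_0 = X^d+\sum_j v_jX^j$ of $\alpha$ over $\FF_{q^k}$ has degree $d$. If all $v_j$ lay in a proper subfield $\FF_{q^{k'}}$ of $\FF_{q^k}$, then $\alpha$ would have degree at most $d$ over $\FF_{q^{k'}}$, forcing $\FF_{q^m}=\FF_q(\alpha)\subseteq\FF_{q^{k'}}(\alpha)\subseteq\FF_{q^{dk'}}$ with $dk'<m$, which is impossible; hence $\FF_q(v_0,\dots,v_{d-1})=\FF_{q^k}$. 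For the factorization, $f_0^{(\mu)}$ is monic of degree $d$, and applying $\sigma^\mu$ to $\sum_j v_j(\alpha^{q^{\nu k}})^j=0$ shows its roots are $\alpha^{q^{\mu+\nu k}}$, $0\le\nu\le d-1$. As $\mu$ runs through $0,\dots,k-1$ and $\nu$ through $0,\dots,d-1$ the exponents $\mu+\nu k$ enumerate $0,\dots,m-1$ bijectively, so every linear factor $X-\alpha^{q^i}$ of the (separable) polynomial $f=\prod_{i=0}^{m-1}(X-\alpha^{q^i})$ divides $\prod_{\mu}f_0^{(\mu)}$; since these linear factors are pairwise coprime, $f\mid\prod_\mu f_0^{(\mu)}$, and comparing degrees ($dk=m$) together with monicity gives \eqref{eq:g_fact_inf}.

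For the converse I would start from $f_0$ as stated and a root $\alpha$ of $f_0$; since $f_0$ is irreducible of degree $d$ over $\FF_{q^k}$, $\FF_{q^k}(\alpha)=\FF_{q^{kd}}=\FF_{q^m}$, so $\alpha\in\FF_{q^m}$. Write $\FF_q(\alpha)=\FF_{q^e}$; then $e\mid m$ and $\FF_{q^k}(\alpha)=\FF_{q^{\lcm(e,k)}}$ forces $\lcm(e,k)=m=dk$, i.e. $e=d\gcd(e,k)$. Put $g=\gcd(e,k)$. Because $g\mid e$, the element $\alpha$ has degree $e/g=d$ over $\FF_{q^g}$, so its minimal polynomial $h_0\in\FF_{q^g}[X]\subseteq\FF_{q^k}[X]$ over $\FF_{q^g}$ is monic of degree $d$ with root $\alpha$, hence is a multiple of the minimal polynomial $f_0$ of $\alpha$ over $\FF_{q^k}$; equality of degrees gives $h_0=f_0$, so all $v_j$ lie in $\FF_{q^g}$. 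Then $\FF_{q^k}=\FF_q(v_0,\dots,v_{d-1})\subseteq\FF_{q^g}$ forces $k\mid g$, and with $g\mid k$ this yields $g=k$, hence $e=dk=m$. Therefore the minimal polynomial of $\alpha$ over $\FF_q$ has degree $m$; as $\alpha$ is a root of $f$, which lies in $\FF_q[X]$ (consistent with the hypothesis, since $\sigma$ permutes $f_0^{(0)},\dots,f_0^{(k-1)}$ cyclically and so fixes $f$), this minimal polynomial divides $f$, and both being monic of degree $m$ they coincide; thus $f$ is irreducible over $\FF_q$.

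I expect the only genuine obstacle to be the converse: a priori $f$ could split over $\FF_q$ into factors of degree less than $m$, and the crux is that the true generation condition $\FF_q(v_0,\dots,v_{d-1})=\FF_{q^k}$ forces $k\mid[\FF_q(\alpha):\FF_q]$ and hence $[\FF_q(\alpha):\FF_q]=m$. Introducing $g=\gcd(e,k)$ and comparing the minimal polynomials of $\alpha$ over $\FF_{q^g}$ and over $\FF_{q^k}$ is the device that settles this; the rest is bookkeeping with the field tower and with Frobenius orbits on the roots.
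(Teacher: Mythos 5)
Your proof is correct. Note first that the paper does not prove this lemma at all: it is quoted from the cited reference \cite{kyuregyan2011IrreducibleCompositionsPolynomials}, so there is no in-paper argument to compare against. Your root-based argument is a complete and standard way to establish it: the forward direction via the Frobenius orbit decomposition $i=\mu+\nu k$ of the conjugates of $\alpha$, and the converse via the observation that $\lcm(e,k)=dk$ alone does not force $e=m$, but combined with the generation condition $\FF_q(v_0,\dots,v_{d-1})=\FF_{q^k}$ (through your comparison of the minimal polynomials of $\alpha$ over $\FF_{q^{\gcd(e,k)}}$ and over $\FF_{q^k}$) it does. You correctly identify this as the crux; without the generation hypothesis the converse is false, e.g.\ when $f_0$ already has coefficients in a proper subfield the product $\prod_\mu f_0^{(\mu)}$ acquires repeated factors.

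One cosmetic imprecision: in the forward direction you write $\FF_{q^{k'}}(\alpha)\subseteq\FF_{q^{dk'}}$, which as a containment of finite fields would require $[\FF_{q^{k'}}(\alpha):\FF_{q^{k'}}]$ to divide $d$ rather than merely be at most $d$. The intended contradiction is immediate anyway by comparing degrees over $\FF_q$: one has $m=[\FF_{q^m}:\FF_q]\leq[\FF_{q^{k'}}(\alpha):\FF_q]\leq dk'<dk=m$. This is a one-line repair, not a gap.
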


Observe, that the factorization \cref{eq:g_fact_inf} is clearly unique and if $\alpha \in  \FF_{q^m}$ is a zero of $f_0$, then
\[f_0 = \prod_{j=0}^{d-1}(X-\alpha^{q^{kj}}) \in \FF_{q^m}[X],\]
that is, the zeros of $f_0$ are the conjugates of $\alpha$ over $\FF_{q^k}$. Consequently, the zeroes
of $f_0^{(\mu)}$ are the conjugates of $\alpha^{q^{\mu}}$ over $\FF_{q^k}$ for every $0\leq \mu \leq k-1$.
Further, note that any irreducible factor of $f$ in $\FF_{q^k}[X]$ has degree $d$. The opposite is also true,
any polynomial of degree $d$ dividing $f$ in $\FF_{q^k}[X]$ is irreducible.

It was already observed in \cite{brawley1987IrreduciblesComposedProduct} that if $f\in\FF_q[X]$ is monic and reducible, i.e., $f = f_1\cdot f_2$ with $f_1,f_2\in\FF_q$, the composed product with the monic polynomial $g\in\FF_q[X]$ satisfies $f\diamond g = (f_1\diamond g)\cdot(f_2\diamond g)$. Using the lemma above, we can extend this idea to factorization over intermediate fields.

\begin{theorem}
Let $\gcd(m,n) = 1$, $k\geq 1$ be a divisor of $m$ and $\ell \geq 1$ be a divisor of $n$. Further,
	let $f,g\in\FF_q[X]$ be monic irreducible polynomials with $\deg(f) = m$, $\deg(g) = n$ and
	 $f = f_0^{(0)}\cdots f_0^{(k-1)} \in \FF_{q^k}[X]$ and $g = g_0^{(0)}\cdots g_0^{(\ell-1)} \in \FF_{q^\ell}[X]$.
	Then a diamond product $\diamond$ satisfies conjugate cancellation for the zeros of
	$f$ and $g$ if and only if
	\begin{equation}\label{Equ:factorization}
		f\diamond g = \prod_{\mu=0}^{k-1}\prod_{\nu =0}^{\ell-1} f_0^{(\mu)}\diamond g_0^{(\nu)} \in \FF_{q^{k\ell}}[X],
	\end{equation}
	where the polynomials $f_0^{(\mu)}\diamond g_0^{(\nu)}$ are irreducible in $\FF_{q^{k\ell}}[X]$ and the coefficients of
	$f_0^{(\mu)}\diamond g_0^{(\nu)}$ generate the field $\FF_{q^{k\ell}}$.
\end{theorem}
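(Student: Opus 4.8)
The plan is to deduce this theorem from the three results already in hand: the Generalized Brawley--Carlitz Theorem (\cref{Thm:General-Brawley-Carlitz}), \cref{Lma:irred-factor-over-ext-field}, and the elementary multiplicativity of composed products in each argument. First I would observe that, since $\gcd(m,n)=1$, conjugate cancellation for the zeros $\alpha$ of $f$ and $\beta$ of $g$ is equivalent to $\alpha\diamond\beta\in\FFA_q(mn)$ by \cref{g:cor-irred}, equivalently to $f\diamond g$ being irreducible over $\FF_q$. So the whole theorem reduces to showing that \emph{$f\diamond g$ is irreducible over $\FF_q$ if and only if the right-hand side of \cref{Equ:factorization} holds with each factor $f_0^{(\mu)}\diamond g_0^{(\nu)}$ irreducible over $\FF_{q^{k\ell}}$ and having coefficients generating $\FF_{q^{k\ell}}$.} The field $\FF_{q^{k\ell}}$ is the right one to look at because $\gcd(k,\ell)=1$ (as $k\mid m$, $\ell\mid n$, $\gcd(m,n)=1$), so $\FF_{q^k}\FF_{q^\ell}=\FF_{q^{k\ell}}$ and $\FF_{q^k}\cap\FF_{q^\ell}=\FF_q$.

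For the forward direction, assume $f\diamond g$ is irreducible, so $\alpha\diamond\beta\in\FFA_q(mn)$. The factorization $f=\prod_{\mu=0}^{k-1}f_0^{(\mu)}$ in $\FF_{q^k}[X]$ and $g=\prod_{\nu=0}^{\ell-1}g_0^{(\nu)}$ in $\FF_{q^\ell}[X]$ gives, by iterating the componentwise multiplicativity $(f_1f_2)\diamond g=(f_1\diamond g)(f_2\diamond g)$ and the analogous identity in the second argument, the product formula $f\diamond g=\prod_{\mu,\nu}f_0^{(\mu)}\diamond g_0^{(\nu)}$, a priori over $\FF_{q^{k\ell}}[X]$; this is \cref{Equ:factorization} as an identity of polynomials. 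It remains to see each factor is irreducible over $\FF_{q^{k\ell}}$ with coefficient field exactly $\FF_{q^{k\ell}}$. Using \cref{Lma:irred-factor-over-ext-field} in the direction ``$f$ irreducible $\Rightarrow$ the $f_0^{(\mu)}$ exist'', the zeros of $f_0^{(\mu)}$ are the $\FF_{q^k}$-conjugates of $\alpha^{q^\mu}$, and likewise for $g_0^{(\nu)}$; hence the zeros of $f_0^{(\mu)}\diamond g_0^{(\nu)}$ are elements $\alpha^{q^{\mu+ki}}\diamond\beta^{q^{\nu+\ell j}}$. Since $\alpha\diamond\beta$ generates $\FF_{q^{mn}}$, each such conjugate generates $\FF_{q^{mn}}$ as well, and one computes that the $\FF_{q^{k\ell}}$-conjugates of a fixed $\alpha^{q^\mu}\diamond\beta^{q^\nu}$ are exactly the roots of $f_0^{(\mu)}\diamond g_0^{(\nu)}$ (there are $\frac{mn}{k\ell}=\frac{m}{k}\cdot\frac{n}{\ell}$ of them, matching the degree), so this polynomial is irreducible over $\FF_{q^{k\ell}}$ of degree $\frac{mn}{k\ell}$; that its coefficients generate $\FF_{q^{k\ell}}$ is then the ``only if'' part of \cref{Lma:irred-factor-over-ext-field} applied to $f\diamond g$ with $d=\frac{mn}{k\ell}$ and the extension degree $k\ell$.

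For the converse, suppose \cref{Equ:factorization} holds with each $f_0^{(\mu)}\diamond g_0^{(\nu)}$ irreducible over $\FF_{q^{k\ell}}$ and its coefficients generating $\FF_{q^{k\ell}}$. Then $f\diamond g$, written as $\prod_{\mu,\nu}(f_0^{(\mu)}\diamond g_0^{(\nu)})$, is a product of exactly $k\ell$ polynomials that are precisely the $\FF_{q^{k\ell}}/\FF_q$-Frobenius conjugates of $f_0^{(0)}\diamond g_0^{(0)}$ — here I would verify that applying $\sigma:x\mapsto x^q$ to the coefficients of $f_0^{(\mu)}\diamond g_0^{(\nu)}$ permutes these $k\ell$ factors transitively, which follows from $(f_0^{(\mu)})^{\sigma}=f_0^{(\mu+1)}$, $(g_0^{(\nu)})^{\sigma}=g_0^{(\nu+1)}$, the compatibility of $\diamond$ with $\sigma$, and $\gcd(k,\ell)=1$ forcing the orbit of $(0,0)$ under simultaneous $+1$ to be all of $\ZZ_k\times\ZZ_\ell$. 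Then \cref{Lma:irred-factor-over-ext-field}, in the direction ``such a factorization $\Rightarrow$ irreducibility'', applied with base field $\FF_q$, extension $\FF_{q^{k\ell}}$, and $f_0:=f_0^{(0)}\diamond g_0^{(0)}$, yields that $f\diamond g$ is irreducible over $\FF_q$, hence conjugate cancellation holds by \cref{Thm:General-Brawley-Carlitz}.

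The main obstacle I anticipate is bookkeeping the indices carefully in the forward direction: one has to confirm that the $\frac{mn}{k\ell}$ distinct elements $\alpha^{q^{\mu+ki}}\diamond\beta^{q^{\nu+\ell j}}$ ($0\le i<m/k$, $0\le j<n/\ell$) really are a full set of $\FF_{q^{k\ell}}$-conjugates of one element and not a smaller orbit repeated — i.e.\ that there is no unexpected coincidence among them. This is where $\gcd(m,n)=1$ (hence $\gcd(k,\ell)=1$) together with $\alpha\diamond\beta\in\FFA_q(mn)$ is essential: the exponents $\{(\mu+ki,\nu+\ell j)\}$ run over a transversal argument in $\ZZ_m\times\ZZ_n$ analogous to the orbit count in \cref{g_orbits}, and the generation property $\FF_q(\alpha\diamond\beta)=\FF_{q^{mn}}$ guarantees no collapse. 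Once the index combinatorics is pinned down, everything else is a direct application of the cited lemmas.
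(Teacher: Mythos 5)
Your proposal is correct and follows essentially the same route as the paper: both directions rest on \cref{Thm:General-Brawley-Carlitz}, \cref{Lma:irred-factor-over-ext-field}, and the Chinese Remainder Theorem to identify $(f_0\diamond g_0)^{(\delta)}$ with $f_0^{(\mu)}\diamond g_0^{(\nu)}$ for $\mu\equiv\delta\pmod{k}$, $\nu\equiv\delta\pmod{\ell}$. The only cosmetic difference is that where you verify by hand that the roots of $f_0^{(\mu)}\diamond g_0^{(\nu)}$ form a single $\FF_{q^{k\ell}}$-orbit of size $mn/(k\ell)$, the paper instead notes that $f_0$ and $g_0$ stay irreducible over $\FF_{q^{k\ell}}$ (as $\gcd(k,\ell)=1$) and applies \cref{Thm:General-Brawley-Carlitz} over the base field $\FF_{q^{k\ell}}$ to conclude directly that $f_0\diamond g_0$ is irreducible there.
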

\begin{proof} Let $\alpha \in \FF_{q^m}$ with $f_0 = \prod_{i=0}^{(m/k)-1}(X-\alpha^{q^{ki}})$, and
let $\beta \in \FF_{q^n}$ with $g_0 = \prod_{j=0}^{(n/\ell)-1}(X-\beta^{q^{\ell j}})$.
Then
\[
f_0 \diamond g_0 = \prod_{i=0}^{(m/k)-1}\prod_{j =0}^{(n/\ell)-1}(X-\alpha^{q^{ki}} \diamond \beta^{q^{\ell j}}).
\]
Since $\gcd(k,\ell)=1$, the polynomials $f_0$ and $g_0$ remain irreducible over the extension field $\FF_{q^{k\ell}}$.

If $\diamond$ satisfies conjugate cancellation, then by \Cref{Thm:General-Brawley-Carlitz}, we have that $f_0 \diamond g_0$ is irreducible over $\FF_{q^{k\ell}}$ and  $f\diamond g$ is irreducible over $\FF_q$. Then \cref{Lma:irred-factor-over-ext-field} implies
\[f\diamond g = \prod_{\delta = 0}^{kl-1}(f_0\diamond g_0)^{(\delta)} =
\prod_{\mu=0}^{k-1}\prod_{\nu =0}^{\ell-1} f_0^{(\mu)}\diamond g_0^{(\nu)},\]
since
\[(f_0\diamond g_0)^{(\delta)} = f_0^{(\delta)}\diamond g_0^{(\delta)} =
f_0^{(\mu)}\diamond g_0^{(\nu)},\]
with $\mu \equiv \delta {\pmod k}$ and $\nu \equiv \delta {\pmod \ell}$.

To prove the opposite, we use again the Chinese Remainder Theorem
\[f\diamond g = \prod_{\mu=0}^{k-1}\prod_{\nu =0}^{\ell-1} f_0^{(\mu)}\diamond g_0^{(\nu)} = \prod_{\delta = 0}^{kl-1}(f_0\diamond g_0)^{(\delta)},\]
and then \cref{Lma:irred-factor-over-ext-field} completes the proof.
\end{proof}

Important special cases of the above result are the case $(k,l)=(1,n)$ or $(k,l)=(m,1)$, which yield the factorization of $f\diamond g$ over
$\FF_{q^n}$ and $\FF_{q^m}$ respectively. We state here the corresponding statement for $(k,l)=(m,1)$:

\begin{corollary}
	Let $f,g\in\FF_q[X]$ be irreducible with $\deg(f) = m$ and $\deg(g) = n$, where $\gcd(m,n) = 1$.
	Let a diamond product $\diamond$ be defined on the zeroes of $f$ and $g$. Set
	\[h = \prod_{j=0}^{n-1}(X-\alpha\diamond\beta^{q^j}).\]
	Then $h\in\FF_{q^m}[X]$ and $f\diamond g = \prod_{\mu=0}^{m-1} h^{(\mu)}$. Moreover,
	$\diamond$ satisfies conjugate cancellation if and only if $h$ is irreducible and
	its coefficients generate the field $\FF_{q^m}$.
\end{corollary}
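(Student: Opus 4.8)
The plan is to obtain this corollary as the special case $k=m$, $\ell=1$ of the theorem immediately above, after first settling the two parts that hold with no hypothesis on $\diamond$. Throughout, let $\alpha,\beta$ be roots of $f,g$, respectively.

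First I would check directly that $h\in\FF_{q^m}[X]$ and $f\diamond g=\prod_{\mu=0}^{m-1}h^{(\mu)}$, independently of conjugate cancellation. Expanding the defining product \cref{eq:g_dppol} and writing $\alpha^{q^i}\diamond\beta^{q^j}=(\alpha\diamond\beta^{q^{j-i}})^{q^i}$, with the exponent $j-i$ read modulo $n$ since $\beta^{q^n}=\beta$, the factor with fixed $i$ equals $\prod_{j=0}^{n-1}(X-\alpha^{q^i}\diamond\beta^{q^j})=h^{(i)}$, where $h^{(i)}$ denotes the polynomial obtained from $h$ by raising all coefficients to the $q^i$-th power; multiplying over $i$ gives the asserted factorization. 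To see that $h$ itself has coefficients in $\FF_{q^m}$, note all its roots $\alpha\diamond\beta^{q^j}$ lie in $\FF_{q^{\lcm(m,n)}}=\FF_{q^{mn}}$, and $x\mapsto x^{q^m}$ sends $\alpha\diamond\beta^{q^j}$ to $\alpha\diamond\beta^{q^{j+m}}$; since $\gcd(m,n)=1$, the shift $j\mapsto j+m$ permutes $\ZZ_n$, so the root multiset of $h$ is invariant under $x\mapsto x^{q^m}$ and its coefficients are fixed by this map, i.e.\ lie in $\FF_{q^m}$.

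Next I would specialize the preceding theorem to $k=m$ (a divisor of $m$) and $\ell=1$ (a divisor of $n$). In its notation $f_0$ has degree $m/k=1$, hence $f_0=X-\alpha$ and $f_0^{(\mu)}=X-\alpha^{q^\mu}$, while $g_0=g$ and the product over $\nu$ is empty. Then $f_0\diamond g_0=\prod_{j=0}^{n-1}(X-\alpha\diamond\beta^{q^j})=h$, and by the computation above $f_0^{(\mu)}\diamond g_0=h^{(\mu)}$, so \cref{Equ:factorization} becomes exactly $f\diamond g=\prod_{\mu=0}^{m-1}h^{(\mu)}\in\FF_{q^m}[X]$, and the theorem's equivalence reads: $\diamond$ satisfies conjugate cancellation for the zeros of $f$ and $g$ iff each $h^{(\mu)}$ is irreducible over $\FF_{q^m}$ with coefficients generating $\FF_{q^m}$. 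Because $x\mapsto x^{q^\mu}$ is a field automorphism of $\FF_{q^m}$, which preserves both irreducibility and the property of generating the field, this condition holds for all $\mu$ precisely when it holds for $h$, giving the ``moreover'' part.

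I do not anticipate a real obstacle. The only place that needs a moment of care is matching the degenerate data of the general theorem ($\deg f_0=1$) with $X-\alpha$ and $h$, and observing that the generation hypothesis of \cref{Lma:irred-factor-over-ext-field} then collapses to $\FF_q(\alpha)=\FF_{q^m}$, which is automatic since $f$ is irreducible of degree $m$. Should one prefer to avoid invoking the general theorem, the same conclusion follows directly: if $\diamond$ satisfies conjugate cancellation then $\alpha\diamond\beta\in\FFA_q(mn)$ and $f\diamond g$ is irreducible over $\FF_q$ by \cref{g:cor-irred} and \cref{Thm:General-Brawley-Carlitz}, whence \cref{Lma:irred-factor-over-ext-field} with $k=m$ exhibits $h$ as the required irreducible degree-$n$ factor over $\FF_{q^m}$ with generating coefficients; conversely, if $h$ is such a factor, \cref{Lma:irred-factor-over-ext-field} forces $f\diamond g$ to be irreducible over $\FF_q$, and then \cref{g:cor-irred} yields conjugate cancellation.
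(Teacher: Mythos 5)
Your proposal is correct and matches the paper's intended argument: the paper derives this corollary precisely as the special case $(k,\ell)=(m,1)$ of the preceding theorem, exactly as you do. Your added direct verification of the unconditional parts ($h\in\FF_{q^m}[X]$ and $f\diamond g=\prod_{\mu}h^{(\mu)}$) and the reduction from all $\mu$ to $\mu=0$ via the Frobenius automorphism are details the paper leaves implicit, and they are carried out correctly.
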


\subsection{Decomposition of \texorpdfstring{$f \diamond g$}{f⬦g} over \texorpdfstring{$\FF_q$}{Fq}}\label{sub_sec:decopmosition}

For a given diamond product and two elements $\alpha \in \FFA_q(m)$ and $\beta \in \FFA_q(n)$,
\cref{g_orbits} implies that over $\FF_q$ the diamond product of their minimal polynomials is a product
of minimal polynomials of $\alpha \diamond \beta^{q^j}$, where $0\leq j \leq \gcd(m,n)-1$. We denote by
$m_\gamma$ the minimal polynomial of $\gamma$ over $\FF_q$. Then we have
\begin{equation}\label{g:eq_prod}
	m_\alpha\diamond m_\beta = \prod_{j=0}^{\gcd(m,n)-1}m_{\alpha\diamond \beta^{q^j}}^{\lcm(m,n)/r_j},
\end{equation}
where the number $r_j$ is the degree of the extension $\FF_q(\alpha\diamond \beta^{q^j})$ over $\FF_q$.
Moreover, by \cref{th:g_values} the diamond product $\diamond$ fulfills conjugate cancellation for $\alpha$ and $\beta$
if and only if every $r_j$ appearing in the factorization \eqref{g:eq_prod} satisfies $\lcm(r_j,m)=\lcm(r_j,n)=\lcm(m,n)$.
Note that in general it is possible that the
$m_{\alpha\diamond \beta^{q^j}}$ coincide for different $j$ with $0\leq j  <\gcd(m,n)$.

In the remainder of this section we characterize numbers $r$ satisfying $\lcm(r,m)=\lcm(r,n)=\lcm(m,n)$ in terms of prime decomposition
of $m$ and $n$. Further, we use it to describe the pairs $m,n$ for which the decomposition \cref{g:eq_prod} contains only factors
of maximal possible degree $\lcm(m,n)$ for every $(\alpha, \beta) \in \FFA_q(m) \times \FFA_q(n)$.  \\

For a prime $p$ and an integer $a$, we define the integer $\nu_p(a) \geq 0$ such that
$p^{\nu_p(a)} \mid a$ and $p^{\nu_p(a)+1} \nmid a$.

\begin{lemma}\label{g:lem_char_r}
	For given integers $n$ and $m$ define the pairwise coprime divisors $o,n_1,n_2$ of $n$, and resp. the pairwise coprime divisors
	$o, m_1,m_2$ of $m$, to satisfy:
	\[n=o\cdot n_1\cdot n_2, ~~ m=o\cdot m_1\cdot m_2\]
	such that $\nu_p(n) >\nu_p(m)$ for any prime divisor $p$ of $n_1$ and $\nu_{p'}(m) > \nu_{p'}(n)$ for any prime divisor $p'$
	of $m_1$. Then a divisor $r$ of $\lcm(n,m) = o\cdot n_1 \cdot m_1$ fulfills $\lcm(r,m)=\lcm(r,n)=\lcm(m,n)$ if and only if
	$r=o'\cdot n_1 \cdot m_1$ with $o'$ dividing $o$. In particular, there is no nontrivial divisor $r$ of $\lcm(m,n)$ satisfying
	the condition if and only if $o=1$, or equivalently if $\nu_s(m) \ne \nu_s(n)$ for any prime $s$ dividing $m\cdot n$.
\end{lemma}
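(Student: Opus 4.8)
The plan is to reduce everything to elementary statements about $p$-adic valuations, prime by prime, since the conditions $\lcm(r,m)=\lcm(r,n)=\lcm(m,n)$ and the divisibility $r\mid\lcm(m,n)$ are all equivalent to systems of inequalities on $\nu_p$ over the (finitely many) primes $p$ dividing $mn$. Write $a=\nu_p(m)$, $b=\nu_p(n)$, $c=\nu_p(r)$; then $\nu_p(\lcm(m,n))=\max(a,b)$, and the three conditions become: $c\le\max(a,b)$, $\max(c,a)=\max(a,b)$, and $\max(c,b)=\max(a,b)$, for every prime $p$.

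First I would analyze this local system for a fixed prime $p$ according to the three cases distinguishing $a,b$. If $a<b$ (so $p\mid n_1$): then $\max(a,b)=b$, the condition $\max(c,a)=b$ forces $c=b$ (since $a<b$), and the other two conditions are then automatically satisfied. Symmetrically, if $a>b$ (so $p\mid m_1$), we get $c=a=\max(a,b)$ forced. If $a=b$ (so $p\mid o$): then $\max(a,b)=a=b$, and all three conditions reduce to $c\le a$, so $c$ is a free valuation between $0$ and $a=\nu_p(o)$. Assembling these local conclusions over all primes: at primes dividing $n_1$ the valuation of $r$ is forced to equal $\nu_p(n)=\nu_p(n_1)$, at primes dividing $m_1$ it is forced to equal $\nu_p(m)=\nu_p(m_1)$, and at primes dividing $o$ it ranges freely up to $\nu_p(o)$; at all other primes (not dividing $mn$) the valuation of $\lcm(m,n)$ is $0$ so $c=0$. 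This says exactly that $r = o'\cdot n_1\cdot m_1$ with $o'\mid o$, which is the claimed description. The "in particular" clause is then immediate: the only candidates for $r$ are $o'n_1m_1$, and such an $r$ is a nontrivial proper divisor of $\lcm(m,n)=o\cdot n_1\cdot m_1$ precisely when $o$ itself has a proper nontrivial divisor, i.e. when $o>1$; conversely if $o=1$ the unique admissible $r$ is $n_1m_1=\lcm(m,n)$. Finally, $o=1$ is equivalent to saying no prime $s\mid mn$ falls into the "$a=b$" case, i.e. $\nu_s(m)\ne\nu_s(n)$ for every prime $s\mid mn$.

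I do not expect a serious obstacle here; the argument is a routine but careful bookkeeping exercise with $\max$ of valuations. The one point to be careful about is making sure the three cases for $(a,b)$ are genuinely exhaustive and that in the mixed cases ($a\ne b$) the value of $c$ is truly pinned down and not merely constrained — this follows because $\max(c,a)=b$ with $a<b$ leaves no slack. A secondary point is to state clearly at the outset that $r$ is assumed to divide $\lcm(m,n)$ (as in the lemma hypothesis), so that we only need to determine its valuations at primes dividing $mn$, and to note that $n_1,n_2,m_1,m_2,o$ as defined do partition the prime divisors of $mn$ into the four groups "$\nu_p(n)>\nu_p(m)$", "$\nu_p(m)>\nu_p(n)$", "$\nu_p(m)=\nu_p(n)>0$", together with primes dividing only one of $m,n$ with zero valuation in the other — but such a prime has unequal valuations and hence lands in $n_1$ or $m_1$, so the partition into $o,n_1,m_1$ (and the irrelevant $n_2,m_2$, which by coprimality contribute nothing to $\lcm$) is consistent.
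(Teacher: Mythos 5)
Your proof is correct and is exactly the argument the paper has in mind: the paper's proof consists of the single sentence that the statement ``follows directly from the prime factor decomposition of the numbers $m,n,r$,'' and your prime-by-prime case analysis of $\max(\nu_p(r),\nu_p(m))$ and $\max(\nu_p(r),\nu_p(n))$ is the routine verification being left to the reader. Your explicit treatment of the three cases $\nu_p(m)<\nu_p(n)$, $>$, and $=$, and of primes dividing only one of $m,n$, supplies all the details omitted there.
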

\begin{proof} The statement follows directly from the prime factor decomposition of the numbers $m,n,r$.
\end{proof}

\begin{theorem}\label{Thm:Brawley-Carlitz-gcd}
	Let $m,n\in\NN$ be such that $\nu_p(m) \neq \nu_p(n)$ for any prime $p$ dividing $m \cdot n$, and let $\diamond:\Ga\times\Gb\rightarrow\FFC_q$ be a diamond product. Then the following statements are equivalent:
	\begin{enumerate}
		\item For every $(\alpha,\beta)\in\Gap_m \times\Gbp_n$ it holds that $\alpha\diamond\beta\in\FFA_q(\lcm(m,n))$.
		\item For every $(\alpha,\beta)\in\Gap_m \times\Gbp_n$, any irreducible factor of $m_\alpha\diamond m_\beta$ has degree $\lcm(m,n)$.
		\item $\diamond$ satisfies conjugate cancellation on $\Gap_m\times\Gbp_n$.
	\end{enumerate}
\end{theorem}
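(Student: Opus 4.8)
The plan is to derive the statement directly from \cref{th:g_values}, the factorization formula \cref{g:eq_prod}, and \cref{g:lem_char_r}. The decisive observation is that the hypothesis ``$\nu_p(m)\neq\nu_p(n)$ for every prime $p\mid mn$'' is exactly the condition $o=1$ appearing in \cref{g:lem_char_r}. Hence, whenever $r$ is a divisor of $\lcm(m,n)$ with $\lcm(r,m)=\lcm(r,n)=\lcm(m,n)$, \cref{g:lem_char_r} forces $r=\lcm(m,n)$; this is the only arithmetic input needed, and after it is in place the proof is bookkeeping.

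Before comparing the three conditions I would record a small quantifier reduction. Since $\Gb$ and $\FFA_q(n)$ are both Frobenius invariant, the set $\Gbp_n$ is Frobenius invariant; thus for every $(\alpha,\beta)\in\Gap_m\times\Gbp_n$ and every integer $j$ the pair $(\alpha,\beta^{q^j})$ again lies in $\Gap_m\times\Gbp_n$, while taking $j=0$ shows conversely that every such pair arises this way. Consequently, condition (i) is equivalent to the formally stronger statement, which I will abbreviate (i$'$): for all $(\alpha,\beta)\in\Gap_m\times\Gbp_n$ and all $0\leq j\leq\gcd(m,n)-1$ one has $\alpha\diamond\beta^{q^j}\in\FFA_q(\lcm(m,n))$.

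Next I would check that (ii) and (iii) are each equivalent to (i$'$). Fix $(\alpha,\beta)$ and set $r_j=[\FF_q(\alpha\diamond\beta^{q^j}):\FF_q]$ for $0\leq j\leq\gcd(m,n)-1$; exactly as in the proof of \cref{th:g_values}, each $r_j$ divides $\lcm(m,n)$. For (iii): by \cref{th:g_values}, $\diamond$ satisfies conjugate cancellation for $\alpha,\beta$ if and only if every $r_j$ satisfies $\lcm(r_j,m)=\lcm(r_j,n)=\lcm(m,n)$, which by \cref{g:lem_char_r} holds if and only if $r_j=\lcm(m,n)$ for all $j$; quantifying over all pairs and using the reduction above gives (iii) $\Leftrightarrow$ (i$'$). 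For (ii): in the factorization \cref{g:eq_prod} every exponent $\lcm(m,n)/r_j$ is a positive integer, so the irreducible factors of $m_\alpha\diamond m_\beta$ are precisely the polynomials $m_{\alpha\diamond\beta^{q^j}}$, of degrees $r_j$; hence all irreducible factors of $m_\alpha\diamond m_\beta$ have degree $\lcm(m,n)$ exactly when $r_j=\lcm(m,n)$ for all $j$, and again quantifying over all pairs yields (ii) $\Leftrightarrow$ (i$'$). Chaining these equivalences, (i) $\Leftrightarrow$ (i$'$) $\Leftrightarrow$ (ii) $\Leftrightarrow$ (iii).

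I do not expect a genuine obstacle here: the statement is a repackaging of results already established in this section. The only points that need (routine) care are verifying that the local degrees $r_j$ divide $\lcm(m,n)$, so that \cref{g:lem_char_r} is applicable, and the elementary bookkeeping that lets the quantifier over the conjugates $\beta^{q^j}$ be absorbed into the quantifier over $\beta\in\Gbp_n$; both follow immediately from Frobenius invariance of the sets involved.
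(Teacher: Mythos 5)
Your proposal is correct and follows exactly the route the paper intends: its proof of this theorem is the one-line remark that the statement follows from \cref{th:g_values}, \cref{g:eq_prod} and \cref{g:lem_char_r}, and you have simply supplied the bookkeeping (the observation that the hypothesis is the $o=1$ case of \cref{g:lem_char_r}, plus the Frobenius-invariance quantifier reduction over the conjugates $\beta^{q^j}$) that the authors leave implicit. No gaps.
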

\begin{proof}
	The statement follows from \Cref{th:g_values}, \Cref{g:eq_prod} and \Cref{g:lem_char_r}.
\end{proof}

The next corollary specifies the factorization of \Cref{Thm:Brawley-Carlitz-gcd}~(ii):

\begin{corollary} \label{cor:213}
	Let $m,n\in\NN$ such that $\nu_p(m) \neq \nu_p(n)$ for any prime $p$ dividing $m \cdot n$, and let $\diamond:\Ga\times\Gb\rightarrow\FFC_q$ be a diamond product satisfying conjugate cancellation on $\Gap_m\times\Gbp_n$. Then for every $(\alpha,\beta)\in\Gap_m \times\Gbp_n$ it holds
	\begin{align*}
	m_\alpha\diamond m_\beta
		&=\prod_{i=0}^{\gcd(m,n)-1}{\prod_{j=0}^{\lcm(m,n)-1}(X-\alpha^{q^j}\diamond\beta^{q^{i+j}})}\\
		&=\prod_{i=0}^{\gcd(m,n)-1}{\prod_{j=0}^{\lcm(m,n)-1}(X-\alpha^{q^{i+j}}\diamond\beta^{q^{j}})},
	\end{align*}
	and
	\begin{equation*}
		m_{\alpha\diamond\beta^{q^i}} = \prod_{j=0}^{\lcm(m,n)-1}(X-\alpha^{q^j}\diamond\beta^{q^{i+j}})\quad\text{and}\quad
		m_{\alpha^{q^i}\diamond\beta}=\prod_{j=0}^{\lcm(m,n)-1}(X-\alpha^{q^{i+j}}\diamond\beta^{q^{j}}).
	\end{equation*}
	In particular, $m_{\alpha\diamond\beta^{q^i}}$ has  $\gcd(m,n)$ distinct irreducible factors over $\FF_q$ if and only if
	for every $i \ne i',~0\leq i,i' \leq \gcd(m,n)-1$ the elements $\alpha\diamond\beta^{q^i}$ and $\alpha\diamond\beta^{q^{i'}}$
	are not conjugates over $\FF_q$.
\end{corollary}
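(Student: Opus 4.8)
The plan is to feed the arithmetic hypothesis into \cref{g:lem_char_r} and \cref{th:g_values} and then to unfold the definition of the composed product along the orbit structure from \cref{g_orbits}. First I would note that, by the last part of \cref{g:lem_char_r} (the case $o=1$), the condition $\nu_p(m)\neq\nu_p(n)$ for every prime $p\mid mn$ says exactly that $\lcm(m,n)$ is the only divisor $r$ of $\lcm(m,n)$ with $\lcm(r,m)=\lcm(r,n)=\lcm(m,n)$. Since $\diamond$ satisfies conjugate cancellation for $\alpha$ and $\beta$, \cref{th:g_values} gives $\alpha\diamond\beta^{q^j}\in\FFA_q(r_j)$ with $\lcm(r_j,m)=\lcm(r_j,n)=\lcm(m,n)$ for each $0\le j\le\gcd(m,n)-1$; hence $r_j=\lcm(m,n)$ for all $j$. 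In particular each $\alpha\diamond\beta^{q^j}$ has minimal polynomial of degree exactly $\lcm(m,n)$, and \cref{g:eq_prod} collapses to $m_\alpha\diamond m_\beta=\prod_{j=0}^{\gcd(m,n)-1}m_{\alpha\diamond\beta^{q^j}}$.

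Next I would re-index the defining product of the composed product. By \cref{g_orbits} the $mn$ pairs $(\alpha^{q^u},\beta^{q^v})$ split into $\gcd(m,n)$ orbits of $U=\langle(1,1)\rangle$ acting on $\ZZ_m\times\ZZ_n$, each orbit having length $\lcm(m,n)$, and $\{(0,i):0\le i\le\gcd(m,n)-1\}$ is a set of representatives; the orbit through $(0,i)$ is precisely $\{(j,\,i+j):0\le j\le\lcm(m,n)-1\}$, with the first coordinate read modulo $m$ and the second modulo $n$, and each element is hit exactly once because $U$ has order $\lcm(m,n)$. Therefore
\[
m_\alpha\diamond m_\beta=\prod_{u=0}^{m-1}\prod_{v=0}^{n-1}(X-\alpha^{q^u}\diamond\beta^{q^v})=\prod_{i=0}^{\gcd(m,n)-1}\prod_{j=0}^{\lcm(m,n)-1}(X-\alpha^{q^j}\diamond\beta^{q^{i+j}}),
\]
which is the first displayed identity; the second follows in the same way from the set of representatives $\{(i,0)\}$. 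For fixed $i$, the diamond property gives $\alpha^{q^j}\diamond\beta^{q^{i+j}}=(\alpha\diamond\beta^{q^i})^{q^j}$, and since $\alpha\diamond\beta^{q^i}\in\FFA_q(\lcm(m,n))$ its $\lcm(m,n)$ conjugates are pairwise distinct, so
\[
\prod_{j=0}^{\lcm(m,n)-1}(X-\alpha^{q^j}\diamond\beta^{q^{i+j}})=\prod_{j=0}^{\lcm(m,n)-1}\bigl(X-(\alpha\diamond\beta^{q^i})^{q^j}\bigr)=m_{\alpha\diamond\beta^{q^i}};
\]
the statement for $m_{\alpha^{q^i}\diamond\beta}$ is symmetric.

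For the last assertion I would use that two monic irreducible polynomials over $\FF_q$ are equal if and only if they share a root, that is, if and only if their roots are conjugate over $\FF_q$. Since $m_\alpha\diamond m_\beta=\prod_{i=0}^{\gcd(m,n)-1}m_{\alpha\diamond\beta^{q^i}}$ is a product of $\gcd(m,n)$ irreducible polynomials, it has $\gcd(m,n)$ distinct irreducible factors over $\FF_q$ exactly when these polynomials are pairwise distinct, i.e.\ exactly when $\alpha\diamond\beta^{q^i}$ and $\alpha\diamond\beta^{q^{i'}}$ are non-conjugate over $\FF_q$ for all $i\neq i'$ with $0\le i,i'\le\gcd(m,n)-1$. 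I do not expect a genuine obstacle here: all the substantive content already sits in \cref{th:g_values} and \cref{g:lem_char_r}, and the only delicate point is the combinatorial bookkeeping — verifying that the representatives $\{(0,i)\}$ together with repeated addition of $(1,1)$ enumerate each orbit, hence each pair of conjugates, exactly once, and that consequently every exponent in \cref{g:eq_prod} equals $1$.
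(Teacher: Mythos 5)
Your argument is correct and follows exactly the route the paper intends: the paper states this corollary without a separate proof, presenting it as a direct consequence of \cref{Thm:Brawley-Carlitz-gcd}, whose own proof cites precisely the ingredients you use (\cref{th:g_values}, \cref{g:eq_prod}, \cref{g:lem_char_r}), supplemented by the orbit bookkeeping from \cref{g_orbits}. Your write-up simply makes explicit the re-indexing of the double product along the orbits of $\langle(1,1)\rangle$ and the collapse of the exponents in \cref{g:eq_prod} to $1$, both of which check out (you also sensibly read the final assertion as being about the distinct irreducible factors of $m_\alpha\diamond m_\beta$).
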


\section{Characterization and construction of diamond products satisfying conjugate cancellation}\label{Sec:Construction}

Theorem~\ref{Thm:Brawley-Carlitz-gcd} and Corollary~\ref{cor:213} show that the factorization of composed products can be explicitly determined if the diamond product satisfies conjugate cancellation. In this section, we give several criteria that ensure that a diamond product satisfies conjugate cancellation. Via Theorem~\ref{Thm:Brawley-Carlitz-gcd}, this can then for example be used to explicitly construct many irreducible polynomials. We give concrete examples later in this section.

We study a natural choice of diamond products, namely diamond products that are described by bivariate polynomials over $\FF_q$, so-called $\phi$-products. More precisely, we call a diamond product $\diamond:\Ga\times\Gb\rightarrow\FFC_q$ a $\phi$-product, if there is a $\phi\in\FF_q[X,Y]$ such that
\[ \alpha\diamond\beta = \phi(\alpha,\beta) \]
for all $(\alpha,\beta)\in\Ga\times\Gb$. Conversely, every polynomial $\phi\in\FF_q[X,Y]$ induces a diamond product on $\FFC_q\times\FFC_q$. These products were already studied in \cite{brawley1987IrreduciblesComposedProduct,stichtenoth2014NoteComposedProducts}. Algorithms for computing composed products based on $\phi$-products can be found in \cite{brawley1999computing,bostan2006FastComputationSpecial}. It is a natural question which choices of $\phi$ yield diamond products satisfying conjugate cancellation.

In this section, it is advantageous to represent the polynomials $\phi\in\FF_q[X,Y]$ in the following general form:
\begin{equation}\label{Equ:Diam-Prod-Repr}
	\phi(X,Y) = \sum_{s=1}^r u_s(X)v_s(Y),
\end{equation}
where $u_s\in\FF_q[X]$ and $v_s\in\FF_q[Y]$ are polynomials.
This representation generalizes many forms of polynomial representation.
For example, let $m,n$ be arbitrary integers and consider the following three representations:
\[
	\phi(X,Y) = \sum_{i=0}^{m-1}\sum_{j=0}^{n-1}c_{ij}X^iY^j = \sum_{i=0}^{m-1} \chi_i(Y)X^i = \sum_{j=0}^{n-1} \psi_j(X)Y^j
\]
for some coefficients $c_{ij}\in\FF_q$, and coefficient polynomials $\chi_i\in\FF_q[Y]$ and $\psi_j\in\FF_q[X]$ defined in the obvious way. All three of them are instances of \Cref{Equ:Diam-Prod-Repr}. For example, we obtain the last one by letting $r = n$, $v_s(Y) = Y^{s-1}$, and $u_s(X) = \sum_{i=0}^{m-1} c_{i,s-1}X^i$ for $s = 1,\dots,n$.

The main advantage that comes with \Cref{Equ:Diam-Prod-Repr} is that the coefficient polynomials can be chosen flexibly. Ultimately, this leads to the applicability of our methods to other classes of polynomials, e.g., $\phi(X,Y) = \sum_{i=0}^{m-1}\sum_{j=0}^{n-1}c_{ij}X^{q^i}Y^{q^j}$, which are the subject of \Cref{Sec:Normal}.

\subsection{Two key lemmas}

We now consider the following question: When does the $\phi$-product of a given $\phi\in\FF_q[X,Y]$ satisfy conjugate cancellation on $\Gap_m\times\Gbp_n$? In other words, we need to check if for every $k$ with $\gcd(m,n)\mid k$ it holds that
\begin{align}
	\phi(\alpha,\beta) = \phi(\alpha^{q^k},\beta) &\implies \alpha = \alpha^{q^k}, \label{Equ:CC-Phi-Prod-a}\\
	\phi(\alpha,\beta) = \phi(\alpha,\beta^{q^k}) &\implies \beta = \beta^{q^k} \label{Equ:CC-Phi-Prod-b}
\end{align}
where $(\alpha,\beta)\in \Gap_m\times\Gbp_n$. In some of the later proofs we consider only  \Cref{Equ:CC-Phi-Prod-a}. We refer to this as the right-sided conjugate cancellation, as we cancel out the argument on the right-hand side. It is clear from the symmetry in the representation \Cref{Equ:Diam-Prod-Repr} that analogous statements can be proven for \Cref{Equ:CC-Phi-Prod-b} as well by switching the roles of $u_s$ and $v_s$.

The following \namecref{Lma:Coeff-Polys} generalizes \cite[Lemma 8]{munemasa2016NoteBrawleyCarlitzTheorem}.

\begin{lemma}\label{Lma:Coeff-Polys}
	Let  $m,n>1$, $~k\geq 1$, $\beta \in \FFA_q(n)$ and $\phi\in\FF_q[X,Y]$ have a representation
	\[ \phi(X,Y) = \sum_{s=1}^r u_s(X)v_s(Y). \]
	Further, let $\{v_s(\beta) \mid s=1,\dots,r\}$ be linearly independent over $\FF_{q^m}$ and $\alpha\in\FFA_q(m)$. Then  $\phi(\alpha^{q^k},\beta) = \phi(\alpha,\beta)$ if and only if $u_s(\alpha) \in \FF_{q^k}$ for every $s = 1,\dots,r$.
\end{lemma}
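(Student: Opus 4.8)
The plan is to translate the polynomial identity $\phi(\alpha^{q^k},\beta)=\phi(\alpha,\beta)$ into a statement about the coefficients $u_s(\alpha)$ by exploiting linear independence of the $v_s(\beta)$ over the right field. First I would rewrite the two sides in the representation $\phi(X,Y)=\sum_{s=1}^r u_s(X)v_s(Y)$, so that
\[
\phi(\alpha,\beta)=\sum_{s=1}^r u_s(\alpha)v_s(\beta),\qquad \phi(\alpha^{q^k},\beta)=\sum_{s=1}^r u_s(\alpha^{q^k})v_s(\beta)=\sum_{s=1}^r u_s(\alpha)^{q^k}v_s(\beta),
\]
where the last equality uses that $u_s$ has coefficients in $\FF_q$, so $u_s(\alpha^{q^k})=u_s(\alpha)^{q^k}$. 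Subtracting, the hypothesis $\phi(\alpha^{q^k},\beta)=\phi(\alpha,\beta)$ becomes
\[
\sum_{s=1}^r \bigl(u_s(\alpha)^{q^k}-u_s(\alpha)\bigr)\,v_s(\beta)=0 .
\]
Now observe that each coefficient $u_s(\alpha)^{q^k}-u_s(\alpha)$ lies in $\FF_{q^m}$: indeed $\alpha\in\FFA_q(m)$ means $\alpha\in\FF_{q^m}$, hence $u_s(\alpha)\in\FF_{q^m}$, and $\FF_{q^m}$ is closed under the Frobenius $x\mapsto x^{q^k}$. Since $\{v_1(\beta),\dots,v_r(\beta)\}$ is linearly independent over $\FF_{q^m}$ by hypothesis, this forces $u_s(\alpha)^{q^k}-u_s(\alpha)=0$ for every $s$, i.e.\ $u_s(\alpha)^{q^k}=u_s(\alpha)$, which is exactly the condition $u_s(\alpha)\in\FF_{q^k}$ (the fixed field of $x\mapsto x^{q^k}$ inside $\FFC_q$). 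The converse is immediate by running the same computation backwards: if $u_s(\alpha)\in\FF_{q^k}$ for all $s$, then each difference $u_s(\alpha)^{q^k}-u_s(\alpha)$ vanishes and the two sums agree.

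The only point requiring a little care — and the step I would flag as the potential obstacle — is the interplay of fields in the phrase ``linearly independent over $\FF_{q^m}$'': one must be sure that the relation $\sum_s c_s v_s(\beta)=0$ with $c_s=u_s(\alpha)^{q^k}-u_s(\alpha)$ is genuinely a linear relation with coefficients in $\FF_{q^m}$, which is why verifying $c_s\in\FF_{q^m}$ (rather than merely in some larger field $\FF_{q^{\lcm(m,n)}}$) is the crux. This is where the hypothesis $u_s\in\FF_q[X]$ and $\alpha\in\FF_{q^m}$ are both used essentially. Everything else is a routine manipulation of the representation \eqref{Equ:Diam-Prod-Repr}, and the symmetry remark in the text then gives the analogous characterization for \eqref{Equ:CC-Phi-Prod-b} with the roles of $u_s$ and $v_s$ exchanged. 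I would also note in passing that the hypotheses $m,n>1$ and $k\ge 1$ are not really needed for this particular equivalence, but are kept for consistency with how the lemma is applied later.
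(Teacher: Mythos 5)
Your argument is correct and follows essentially the same route as the paper: expand both sides in the representation $\phi(X,Y)=\sum_s u_s(X)v_s(Y)$, use $u_s\in\FF_q[X]$ to rewrite $u_s(\alpha^{q^k})=u_s(\alpha)^{q^k}$, observe the coefficients $u_s(\alpha)^{q^k}-u_s(\alpha)$ lie in $\FF_{q^m}$, and invoke the linear independence of the $v_s(\beta)$ over $\FF_{q^m}$. The point you flag as the crux (verifying the coefficients genuinely lie in $\FF_{q^m}$) is exactly the step the paper also makes explicit, so nothing is missing.
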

\begin{proof}
	Assuming that $\phi(\alpha^{q^k},\beta) = \phi(\alpha,\beta)$, simple calculation shows
	\begin{align*}
		0 &= \phi(\alpha^{q^k},\beta) - \phi(\alpha,\beta)\\
		&= \sum_{s=1}^r \left(u_s(\alpha)-u_s(\alpha^{q^k})\right) v_s(\beta)\\
		&= \sum_{s=1}^r \left(u_s(\alpha)-u_s(\alpha)^{q^k}\right) v_s(\beta),
	\end{align*}
 	where we use that $u_s\in\FF_q[X]$ and hence $u_s(\alpha) \in \FF_{q^m}$.
	Because of the linear independence of the elements $v_s(\beta)$ over $\FF_{q^m}$, we must have $u_s(\alpha) = u_s(\alpha)^{q^k}$ for all $s = 1,\dots,r$. The converse is obvious.
\end{proof}

\begin{lemma}\label{Lma:Main-Lemma}
	Let  $m,n>1$ and $\phi\in\FF_q[X,Y]$ have a representation \[ \phi(X,Y) = \sum_{s=1}^r u_s(X)v_s(Y). \]
    Further, let $\beta\in\FFA_q(n)$ such that $\{v_s(\beta) \mid s=1,\dots,r\}$ are linearly independent over $\FF_{q^m}$ and $\alpha\in\FFA_q(m)$. Then  the implication
	\begin{equation}\label{Equ:Lma32-Implication}
		\phi(\alpha,\beta) = \phi(\alpha^{q^k},\beta)\implies \alpha = \alpha^{q^k}
	\end{equation}
	holds for every $k\in\ZZ$ if and only if $\FF_q(u_1(\alpha),u_2(\alpha),\dots,u_r(\alpha)) = \FF_{q^m}$.
\end{lemma}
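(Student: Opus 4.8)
The plan is to reduce the statement to an elementary divisibility fact, using \Cref{Lma:Coeff-Polys} as the bridge between the algebraic identity $\phi(\alpha,\beta) = \phi(\alpha^{q^k},\beta)$ and the field-theoretic condition $u_s(\alpha) \in \FF_{q^k}$. First I would note that, since each $u_s$ has coefficients in $\FF_q$ and $\alpha \in \FF_{q^m}$, we have $u_s(\alpha^{q^k}) = u_s(\alpha)^{q^k}$ with $u_s(\alpha)\in\FF_{q^m}$; hence both $\phi(\alpha^{q^k},\beta)$ and the truth value of ``$\alpha = \alpha^{q^k}$'' depend only on $k \bmod m$. Consequently it suffices to verify \eqref{Equ:Lma32-Implication} for $1 \le k \le m$, the case $k \equiv 0$ being trivially true, and this puts us in the range $k \ge 1$ where \Cref{Lma:Coeff-Polys} applies. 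For such $k$, the hypotheses on $\beta$ and $\phi$ are exactly those of \Cref{Lma:Coeff-Polys}, so $\phi(\alpha,\beta) = \phi(\alpha^{q^k},\beta)$ holds if and only if $u_s(\alpha) \in \FF_{q^k}$ for every $s = 1,\dots,r$.

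Next I would introduce the subfield $\FF_{q^d} := \FF_q(u_1(\alpha),\dots,u_r(\alpha))$ of $\FF_{q^m}$, so that $d \mid m$. Then $u_s(\alpha) \in \FF_{q^k}$ for all $s$ is equivalent to $\FF_{q^d} \subseteq \FF_{q^k}$, i.e. to $d \mid k$. On the other hand, because $\FF_q(\alpha) = \FF_{q^m}$, the equality $\alpha = \alpha^{q^k}$ holds if and only if $\alpha \in \FF_{q^k}$, i.e. if and only if $m \mid k$. Putting these together, \eqref{Equ:Lma32-Implication} holds for every $k \in \ZZ$ if and only if the implication $d \mid k \Rightarrow m \mid k$ holds for every $k$.

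It remains to observe that ``$d \mid k \Rightarrow m \mid k$ for all $k$'' is equivalent to $m \mid d$: specializing to $k = d$ forces $m \mid d$, and conversely $m \mid d$ makes the implication trivial by transitivity of divisibility. Combined with $d \mid m$ this yields $d = m$, that is, $\FF_q(u_1(\alpha),\dots,u_r(\alpha)) = \FF_{q^m}$, which is precisely the claimed characterization.

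I do not expect any genuine obstacle here once \Cref{Lma:Coeff-Polys} is available; the only points that need a little care are the reduction from arbitrary $k \in \ZZ$ to the range $k \ge 1$ required by \Cref{Lma:Coeff-Polys}, and the standard identifications $\alpha = \alpha^{q^k} \iff m \mid k$ and $\FF_{q^d} \subseteq \FF_{q^k} \iff d \mid k$, the former relying on $\alpha \in \FFA_q(m)$. The linear-independence hypothesis on $\{v_s(\beta)\}$ over $\FF_{q^m}$ enters only through \Cref{Lma:Coeff-Polys} and is not used again.
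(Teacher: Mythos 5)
Your proof is correct and follows essentially the same route as the paper: both directions hinge on \cref{Lma:Coeff-Polys} together with the identifications $\alpha=\alpha^{q^k}\iff m\mid k$ and $u_s(\alpha)\in\FF_{q^k}$ for all $s$ $\iff d\mid k$ where $\FF_{q^d}=\FF_q(u_1(\alpha),\dots,u_r(\alpha))$. The only cosmetic difference is that you package both implications at once as the divisibility statement ``$d\mid k\Rightarrow m\mid k$ for all $k$'' (specializing to $k=d$), while the paper argues the two directions separately, re-deriving the easy half of \cref{Lma:Coeff-Polys} by direct computation in the forward direction.
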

\begin{proof}
	Assume that Eq. \eqref{Equ:Lma32-Implication} holds. Let $k\in\NN$ such that $\FF_q(u_1(\alpha),\dots,u_r(\alpha)) = \FF_{q^k}$. We claim that $k = m$. Note that, by construction, $\FF_{q^k}$ is a subfield of $\FF_{q^m}$. Therefore, we have $k \leqslant m$. For $\beta\in\FFA_q(n)$ the following equation holds:
	\begin{align*}
		\phi(\alpha,\beta)-\phi(\alpha^{q^k},\beta)
		&= \sum_{s=1}^r\left(u_s(\alpha)-u_s(\alpha^{q^k})\right)v_s(\beta) \\
		&= \sum_{s=1}^r\left(u_s(\alpha)-u_s(\alpha)^{q^k}\right)v_s(\beta)\\
		&= 0,
	\end{align*}
    where we use that $u_s\in\FF_q[X]$ and $u_s(\alpha)\in\FF_{q^k}$.
	From Eq. \eqref{Equ:Lma32-Implication} it follows that $\alpha = \alpha^{q^k}$. This implies $m\mid k$ because $\alpha\in\FFA_q(m)$. Together with $k\leqslant m$ we arrive at $k = m$.

	To prove the converse, take $k\in\ZZ$ such that $\phi(\alpha,\beta) = \phi(\alpha^{q^k},\beta)$. Without loss of generality we may assume $k>0$. From \Cref{Lma:Coeff-Polys} it follows that
	\[u_s(\alpha) = u_s(\alpha)^{q^k} \text{ for all } s = 1,\dots,r.\]
	Therefore, $u_s(\alpha)\in\FF_{q^k}$ for all indices $s$. Now the assumption implies
	\[ \FF_{q^m} = \FF_q(u_1(\alpha),u_2(\alpha),\dots,u_r(\alpha)) \subseteq \FF_{q^k}. \]
	In particular, $\FF_{q^m}$ is a subfield of $\FF_{q^k}$, so $m\mid k$. Since $\alpha\in\FFA_q(m)$, we arrive at $\alpha = \alpha^{q^k}$.
\end{proof}

\begin{remark}
	\begin{enumerate}
		\item Notice that \Cref{Lma:Coeff-Polys,Lma:Main-Lemma} do not put any restriction on the degrees of the coefficient polynomials $u_s(X)$ and $v_s(Y)$ nor on the value of $\gcd(m,n)$. These lemmas both play an important role in the following sections where we impose different additional conditions.
		\item The implication \eqref{Equ:Lma32-Implication} shows that the value of $\phi(\alpha,\beta)$ is different from $\phi(\alpha^{q^k},\beta)$ for $k = 1,\dots,m-1$. Indeed, applying \Cref{Lma:Main-Lemma} to $\alpha^q,\dots,\alpha^{q^{m-1}}$ and noticing that $u_s(\alpha)^{q^k}$ belongs to the same field extension as $u_s(\alpha)$ shows that the values
		\[ \phi(\alpha,\beta),~\phi(\alpha^q,\beta),\dots,\phi(\alpha^{q^{m-1}},\beta) \]
		are pairwise distinct.
	\end{enumerate}
\end{remark}

\subsection{An important class of \texorpdfstring{$\phi$}{phi}-products}
With the help of \Cref{Lma:Main-Lemma}, we now show a characterization of conjugate cancellation on $\Gap_n\times\Gbp_m$ for a special kind of $\phi$-products. We assume that the degrees of the extension fields satisfy $\gcd(m,n) = 1$, and the partial degrees of $\phi$ satisfy $\deg_X(\phi) < m$ and $\deg_Y(\phi) < n$.

This type of $\phi$-product is of particular importance, as for every choice of monic irreducible polynomials $f,g,h\in\FF_q[X]$ with $\deg(f) = m$, $\deg(g) = n$, and $\deg(h) = mn$, there exists a $\phi$-product satisfying $\deg_X(\phi) < m$, $\deg_Y(\phi) < n$ and conjugate cancellation on the roots of $f,g$ such that $f\diamond_\phi g = h$. Indeed, let $\alpha$ be a root of $f$ and $\beta$ be a root of $g$. It is easy to see that the elements $\alpha^i\beta^j$ for $0\leq i\leq m-1$ and $0\leq j\leq n-1$ form a basis of $\FF_{q^{mn}}$ over $\FF_q$. Letting $\gamma$ be a root of $h$, there is a unique representation $\gamma = \sum_{ij}c_{ij}\alpha^i\beta^j$ with $c_{ij}\in\FF_q$. Define $\phi(X,Y) = \sum_{ij}c_{ij}X^iY^j$ for which $\phi(\alpha,\beta) = \gamma$ holds. Note that $\phi$ satisfies conjugate cancellation on $\alpha$ and $\beta$ by \Cref{Thm:General-Brawley-Carlitz}.

The next theorem describes a choice for coefficient polynomials $u_s$ and $v_s$ for which the linear independence of the evaluations on $\alpha$ resp. $\beta$ is automatically satisfied and hence \Cref{Lma:Main-Lemma} applies.

\begin{theorem}\label{Thm:Conj-Cancel}
	Let integers $m,n>1$ be coprime. Let $\phi\in\FF_q[X,Y]$ have a representation
	\[ \phi(X,Y) = \sum_{s=1}^r u_s(X)v_s(Y), \qquad \deg(u_s) < m, ~ \deg(v_s) < n, \]
	where the sets of coefficient polynomials $\{u_s\in\FF_q[X]\mid s=1,\dots,r\}$ and $\{v_s\in\FF_q[Y]\mid s= 1,\dots,r\}$ are linearly independent over $\FF_q$.
	The $\phi$-product satisfies conjugate cancellation on $(\alpha,\beta)\in\Gap_m\times\Gbp_n$ if and only if the following properties hold
	\begin{enumerate}
		\item $\FF_q(u_1(\alpha),u_2(\alpha),\dots,u_r(\alpha)) = \FF_{q^{m}}$, and
		\item $\FF_q(v_1(\beta),v_2(\beta),\dots,v_r(\beta)) = \FF_{q^{n}}$.
	\end{enumerate}
\end{theorem}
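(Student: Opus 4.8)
The plan is to reduce the statement to \Cref{Lma:Main-Lemma} and its mirror image. That lemma, applied with $\alpha,\beta$ in their present roles, requires the evaluations $v_1(\beta),\dots,v_r(\beta)$ to be linearly independent \emph{over $\FF_{q^m}$}, whereas the hypothesis only supplies linear independence of the polynomials $u_s$ and $v_s$ over $\FF_q$. Upgrading linear independence from $\FF_q$ to the larger field $\FF_{q^m}$ is the heart of the matter, and it is exactly here that the assumption $\gcd(m,n)=1$ is used.

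First I would note that $v_1(\beta),\dots,v_r(\beta)$ are linearly independent over $\FF_q$: since $\beta\in\FFA_q(n)$, evaluation at $\beta$ is an $\FF_q$-linear isomorphism from the space of polynomials of degree $<n$ onto $\FF_{q^n}$, and it therefore carries the $\FF_q$-independent family $v_1,\dots,v_r$ (each of degree $<n$) to an $\FF_q$-independent family. The same reasoning shows $u_1(\alpha),\dots,u_r(\alpha)$ are $\FF_q$-linearly independent in $\FF_{q^m}$.

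Next I would promote these to independence over the respective larger fields. Extend $v_1(\beta),\dots,v_r(\beta)$ to an $\FF_q$-basis $w_1,\dots,w_n$ of $\FF_{q^n}$. Since $\gcd(m,n)=1$, the compositum $\FF_{q^m}\FF_{q^n}$ equals $\FF_{q^{mn}}$, which has degree $n$ over $\FF_{q^m}$; as every element of $\FF_{q^{mn}}$ is an $\FF_{q^m}$-linear combination of elements of $\FF_{q^n}$, the $w_i$ span $\FF_{q^{mn}}$ over $\FF_{q^m}$, and since there are exactly $n$ of them they form an $\FF_{q^m}$-basis. In particular $v_1(\beta),\dots,v_r(\beta)$ are linearly independent over $\FF_{q^m}$. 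Running the same argument with $m$ and $n$ interchanged shows $u_1(\alpha),\dots,u_r(\alpha)$ are linearly independent over $\FF_{q^n}$.

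Finally, the hypotheses of \Cref{Lma:Main-Lemma} are now met, so that lemma gives that \eqref{Equ:CC-Phi-Prod-a} holds for every $k\in\ZZ$ --- equivalently, since $\gcd(m,n)=1$, for every $k$ with $\gcd(m,n)\mid k$ --- if and only if $\FF_q(u_1(\alpha),\dots,u_r(\alpha))=\FF_{q^m}$, i.e.\ condition (i). Applying \Cref{Lma:Main-Lemma} with the roles of $u_s$ and $v_s$, of $\alpha$ and $\beta$, and of $m$ and $n$ swapped shows that \eqref{Equ:CC-Phi-Prod-b} holds for all such $k$ if and only if $\FF_q(v_1(\beta),\dots,v_r(\beta))=\FF_{q^n}$, i.e.\ condition (ii). Since conjugate cancellation of the $\phi$-product on $(\alpha,\beta)$ is by definition the conjunction of \eqref{Equ:CC-Phi-Prod-a} and \eqref{Equ:CC-Phi-Prod-b} over all admissible $k$, the theorem follows. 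The only genuinely non-formal point is the passage from $\FF_q$-independence to $\FF_{q^m}$-independence in the third step; everything else is a direct appeal to the two key lemmas.
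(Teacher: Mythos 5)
Your proof is correct and follows essentially the same route as the paper: both reduce the theorem to \Cref{Lma:Main-Lemma} (and its mirror image for \eqref{Equ:CC-Phi-Prod-b}), with the only substantive work being the upgrade of the linear independence of $v_1(\beta),\dots,v_r(\beta)$ from $\FF_q$ to $\FF_{q^m}$. The paper does this upgrade by observing $\beta\in\FFA_{q^m}(n)$ (so a relation $\sum_s c_s v_s(\beta)=0$ with $c_s\in\FF_{q^m}$ forces $\sum_s c_s v_s=0$ as a polynomial) and then a rank argument on the coefficient matrix, whereas you extend the $v_s(\beta)$ to an $\FF_q$-basis of $\FF_{q^n}$ and do a dimension count on the compositum $\FF_{q^{mn}}$ over $\FF_{q^m}$; both are valid and use $\gcd(m,n)=1$ in the same essential way.
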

\begin{proof}
	Let $\beta\in\FFA_q(n)$ and $g\in\FF_q[X]$ be the minimal polynomial of $\beta$. From \cite[Theorem 3.46]{lidl1996FiniteFields} it follows that $g$ is also irreducible over $\FF_{q^m}$. Therefore, $\beta\in\FFA_{q^m}(n)$. Hence, $\FFA_q(n)\subseteq\FFA_{q^m}(n)$.

	We show that the set of evaluations $\{ v_s(\beta) : s = 1,\dots,r\}$ is linearly independent over $\FF_{q^m}$. Let $c_s\in\FF_{q^m}$ such that
	\[ 0 = \sum_{s=1}^r c_sv_s(\beta) = \left(\sum_{s=1}^r c_sv_s \right)(\beta). \]
	Since $\beta\in\FFA_{q^m}(n)$, the minimal polynomial of $\beta$ must be of degree $n$. Additionally, since $\deg(v_s) \leqslant n-1$, it follows that $\sum_{s=1}^r c_sv_s = 0$. Consider the $r\times n$ matrix $A = (v_{sj})_{sj}$, where $v_{sj}\in\FF_q$ is the coefficient of $Y^j$ in $v_s$. Now $\sum_{s=1}^r c_sv_s = 0$ is equivalent to
	\begin{equation}\label{Equ:Thm34}
		(0,\dots,0) = (c_1,\dots, c_r)A.
	\end{equation}
	Since the polynomials $v_1,\dots,v_r$ are linearly independent, it follows that the rows of $A$ are linearly independent over $\FF_q$. Equivalently, $A$ has an $r\times r$ submatrix with non-vanishing determinant. The determinant is independent of the field extension we view $A$ in. Hence, the rows of $A$ are linearly independent over $\FF_{q^m}$, and \Cref{Equ:Thm34} implies $c_1,\dots,c_r = 0$. In other words, $v_1(\beta),\dots,v_r(\beta)$ are linearly independent over $\FF_{q^m}$.

	Applying \Cref{Lma:Main-Lemma} shows that $\phi$ satisfies \eqref{Equ:CC-Phi-Prod-a} on $(\alpha,\beta)$ if and only if it holds that
	$\FF_q(u_1(\alpha),u_2(\alpha),\dots,u_r(\alpha)) = \FF_{q^{m}}$.
	The equivalence of \eqref{Equ:CC-Phi-Prod-b} and (ii) can be shown in the same way by swapping the roles of $u_s$ and $v_s$.
\end{proof}

\begin{remark}\label{Rmk:Existence-Special-Repr}
	Given $\phi\in\FF_q[X,Y]$, it is always possible to choose $\{u_1,\dots,u_r\}$ and $\{v_1,\dots,v_r\}$ in such a way that both sets of polynomials  are linearly independent over $\FF_q$.

	Let $\phi(X,Y) = \sum_{i=0}^{m-1}\sum_{j=0}^{n-1}c_{ij}X^iY^j$. Then define $C = (c_{ij})_{ij}\in\FF_q^{m\times n}$ and let $r = \rank(C)$. Now, we can decompose $C$ into a sum of rank 1 matrices, i.e., $C = \sum_{s = 1}^r a_sb_s^T$,
	where $a_1,\dots,a_r\in\FF_q^m$ and $b_1,\dots,b_r\in\FF_q^n$ are two linearly independent collections of vectors.
	Let $u_s(X) = \sum_{i=0}^{m-1} a_{si} X^i$ and $v_s(Y) = \sum_{j=0}^{n-1} b_{sj}Y^j$, where $a_{si}$ denotes the $i$-th entry of $a_s$ and $b_{sj}$ the $j$-th entry of $b_s$. Then $c_{ij} = \sum_{s=1}^r a_{si}b_{sj}$, which implies
	\[\phi(X,Y) = \sum_{i=0}^{m-1}\sum_{j=0}^{n-1}\sum_{s=1}^r a_{si}b_{sj} X^iY^j = \sum_{s=1}^r u_s(X)v_s(Y).\]
	The linear independence of $u_1,\dots,u_r$ and $v_1,\dots,v_r$ is obvious. Conversely, if $\phi$ is represented as in \Cref{Equ:Diam-Prod-Repr} with linear independent $u_1,\dots,u_r$ and $v_1,\dots,v_r$, the same calculation shows that $C$ is the sum of $r$ rank 1 matrices. This implies $\rank(C) = r$.
\end{remark}

We now give a characterization for conjugate cancellation using a very common representation of bivariate polynomials. Note that the coefficient polynomials $\chi_i$ and $\psi_j$ do not need to be linearly independent.

\begin{theorem}\label{Cor:Conj-Cancel-nice-coeffs}
	Let integers $m,n>1$ be coprime and let $\phi\in\FF_q[X,Y]$ have coefficient polynomials $\chi_i\in\FF_q[X],~\psi_j\in\FF_q[Y]$ such that
	\[ \phi(X,Y) = \sum_{i=0}^{m-1} \chi_i(Y)X^i = \sum_{j=0}^{n-1} \psi_j(X)Y^j. \]
	The $\phi$-product satisfies conjugate cancellation on $(\alpha,\beta)\in\Gap_m\times\Gbp_n$ if and only if of the following properties hold
	\begin{enumerate}
		\item $\FF_q(\psi_0(\alpha),\psi_1(\alpha),\dots,\psi_{n-1}(\alpha)) = \FF_{q^{m}}$, and
		\item $\FF_q(\chi_0(\beta),\chi_1(\beta),\dots,\chi_{m-1}(\beta)) = \FF_{q^{n}}$.
	\end{enumerate}
\end{theorem}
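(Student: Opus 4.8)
The plan is to derive this directly from \Cref{Lma:Main-Lemma}, applied to the two standard representations $\phi(X,Y) = \sum_{j=0}^{n-1}\psi_j(X)Y^j$ and $\phi(X,Y) = \sum_{i=0}^{m-1}\chi_i(Y)X^i$. Note that one cannot simply invoke \Cref{Thm:Conj-Cancel}, because the coefficient polynomials $\psi_j$ (resp. $\chi_i$) need not be linearly independent over $\FF_q$. The reason \Cref{Lma:Main-Lemma} nevertheless applies is that the evaluations appearing in its hypothesis are the powers $1,\beta,\dots,\beta^{n-1}$ (resp. $1,\alpha,\dots,\alpha^{m-1}$), and these turn out to form a basis of a suitable field extension precisely because $\gcd(m,n)=1$. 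Since $\gcd(m,n)=1$, \Cref{Def:Conjugate-Cancellation} asks that the implications \Cref{Equ:CC-Phi-Prod-a} and \Cref{Equ:CC-Phi-Prod-b} hold for \emph{every} integer $k$, so it suffices to characterize each of these two conditions separately.

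First I would treat \Cref{Equ:CC-Phi-Prod-a}, writing $\phi$ in the form $\sum_{s=1}^{n} u_s(X)v_s(Y)$ with $u_s(X) = \psi_{s-1}(X)$ and $v_s(Y) = Y^{s-1}$. As in the proof of \Cref{Thm:Conj-Cancel}, since $\gcd(m,n)=1$ the minimal polynomial of $\beta$ over $\FF_q$ remains irreducible over $\FF_{q^m}$ by \cite[Theorem 3.46]{lidl1996FiniteFields}, so $\FF_{q^m}(\beta) = \FF_{q^{mn}}$ and hence $\{v_s(\beta)\} = \{1,\beta,\dots,\beta^{n-1}\}$ is a basis of $\FF_{q^{mn}}$ over $\FF_{q^m}$; in particular it is linearly independent over $\FF_{q^m}$. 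Applying \Cref{Lma:Main-Lemma} then shows that \Cref{Equ:CC-Phi-Prod-a} holds for all $k$ if and only if $\FF_q(u_1(\alpha),\dots,u_n(\alpha)) = \FF_q(\psi_0(\alpha),\dots,\psi_{n-1}(\alpha)) = \FF_{q^m}$, which is exactly condition (i).

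The treatment of \Cref{Equ:CC-Phi-Prod-b} is the mirror image, using the representation $\phi(X,Y) = \sum_{i=0}^{m-1}\chi_i(Y)X^i$ and switching the roles of the two variables. Here the ``fixed-variable'' coefficient polynomials are $1,X,\dots,X^{m-1}$, whose evaluations $\{1,\alpha,\dots,\alpha^{m-1}\}$ form a basis of $\FF_{q^{mn}}$ over $\FF_{q^n}$ --- again because $\gcd(m,n)=1$ keeps the minimal polynomial of $\alpha$ irreducible over $\FF_{q^n}$. The symmetric version of \Cref{Lma:Main-Lemma} then yields that \Cref{Equ:CC-Phi-Prod-b} holds for all $k$ if and only if $\FF_q(\chi_0(\beta),\dots,\chi_{m-1}(\beta)) = \FF_{q^n}$, that is, condition (ii). Since conjugate cancellation on $(\alpha,\beta)$ is the conjunction of \Cref{Equ:CC-Phi-Prod-a} and \Cref{Equ:CC-Phi-Prod-b} over all $k$, combining the two equivalences completes the proof. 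The only delicate point, and the step I would spell out carefully, is verifying the linear-independence hypothesis of \Cref{Lma:Main-Lemma} in each direction; once that is in place the argument is a routine substitution into the lemma.
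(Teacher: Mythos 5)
Your proposal is correct and takes essentially the same route as the paper: both apply \Cref{Lma:Main-Lemma} to the representation $\phi=\sum_j\psi_j(X)Y^j$ with $v_s(Y)=Y^{s-1}$, using that $1,\beta,\dots,\beta^{n-1}$ are linearly independent over $\FF_{q^m}$ because $\gcd(m,n)=1$, and then argue symmetrically for the other variable. You merely spell out the linear-independence step (and fix a minor indexing slip) that the paper's proof states without elaboration.
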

\begin{proof}
	Note that the representation of $\phi$ already implies that $\deg(\chi_i) < n$ and $\deg(\psi_j) < m$. Let $v_s(Y) = Y^s$ and $u_s(X) = \psi_s(X)$ for $s = 1,\dots,n-1 =: r$. Since $1,\beta,\beta^2,\dots,\beta^{n-1}$ are linearly independent over $\FF_{q^m}$, it follows form \Cref{Lma:Main-Lemma} that $\phi$ satisfies right-sided conjugate cancellation from \eqref{Equ:CC-Phi-Prod-a} on $(\alpha,\beta)$ if and only if $\FF_q(\psi_0(\alpha),\psi_1(\alpha),\dots,\psi_{n-1}(\alpha)) = \FF_{q^{m}}$.
	Again, the second part follows analogously.
\end{proof}

\subsection{Algorithmic verification of conjugate cancellation}\label{Sec:Algorithm}

We now give an algorithm that verifies conjugate cancellation. We use the arithmetic in $R = \FF_q[X]/(f)$ for a polynomial $f\in\FF_q[X]$. Let $\mathsf{M}(m)$ be the number of operations in $\FF_q$ required to multiply two polynomials in $\FF_q[X]$ with degree less than $m$. For instance, classical multiplication requires $\mathsf{M}(m) = \bigO(m^2)$ operations. Faster algorithms only require $\mathsf{M}(m) = \bigO(m\log(m)\log(\log(m)))$ operations (see e.g., \cite[Ch. 8.3]{vonzurgathen2013ModernComputerAlgebra}). Multiplication in $R$ can be implemented using $\bigO(\mathsf{M}(m))$ operations as well.

To test conjugate cancellation, we use the criteria given by \Cref{Thm:Conj-Cancel} or \Cref{Cor:Conj-Cancel-nice-coeffs}. Let $u_1,\dots,u_r\in\FF_q[X]$ be polynomials of degree less than $m$, and let $\alpha$ be the root of a monic irreducible polynomial $f\in\FF_q[X]$ of degree $m$. The goal is to verify $\FF_q(u_1(\alpha),\dots, u_r(\alpha)) = \FF_{q^m}$.
Since $u_s(\alpha)\in\FF_{q^m}$ for $s=1,\dots,r$, we only need to check that $u_s(\alpha)$ does not belong to any subfield of $\FF_{q^m}$ of the form $\FF_{q^d}$, where $d\mid m$. Notice that any such subfield is contained in a subfield $\FF_{q^{m/p}}$, where $p$ is a prime divisor of $m$. Thus, $\FF_q(u_1(\alpha),\dots, u_r(\alpha)) = \FF_{q^m}$ holds if and only if for every prime divisor $p$ of $m$ there exists an $s$ such that $u_s(\alpha)\notin\FF_{q^{m/p}}$ or, equivalently, $u_s(\alpha) \neq u_s(\alpha)^{q^{m/p}}$.

\begin{algorithm}
	\caption{Verify degree of field extension}\label{alg:verify-field-estension}
	\begin{algorithmic}[1]
		\Require $m,r\in\NN$, monic irreducible polynomial $f\in\FF_q[X]$ with $\deg(f) = m$, polynomials $u_1,\dots,u_r\in\FF_q[X]$ with $\deg(u_s) < m$ for all $s = 1,\dots,r$.
		\Ensure $\FF_q(u_1(\alpha),\dots,u_r(\alpha)) = \FF_{q^m}$, where $\alpha\in\FF_{q^m}$ is a root of $f$.

		\State Obtain the set of distinct prime factors $M = \{p_1,\dots,p_k\}$ of $m$.
		\State Let $R = \FF_q[X]/(f)$ and $\xi = X + (f)\in R$. Compute $\xi^{q^{m/p_1}},\dots,\xi^{q^{m/p_k}}$ using the square and multiply algorithm.
		\For{$s = 1,\dots,r$}
			\State Use Horner's scheme to evaluate $u_s(\xi^{q^{m/p}})$ for all $p\in M$.
			\State If $u_s(\xi^{q^{m/p}}) \neq u_s(\xi)$, remove $p$ from $M$.
			\State If $M = \emptyset$ \Return True.
		\EndFor
	\State\Return False
	\end{algorithmic}
\end{algorithm}

\begin{remark}
	\begin{enumerate}
		\item If \Cref{alg:verify-field-estension} is called multiple times on the same polynomial $f\in\FF_q[X]$, then the values $\xi^{q^{m/p_1}},\dots,\xi^{q^{m/p_k}}$ can be reused.
		\item The evaluation of $u_s(\xi)$ comes at no additional cost as $u_s\in\FF_q[X]$ is the canonical representation of $u_s(\xi)$ in $R$.
		\item If $M$ becomes empty in the execution of \Cref{alg:verify-field-estension}, we can prematurely terminate the for-loop.
	\end{enumerate}
\end{remark}

\begin{proposition} \Cref{alg:verify-field-estension} is correct and requires the following number of operations.
	\begin{enumerate}
		\item The cost for the pre-computation in step 2 is $\bigO(m\log(m)\log(q)\mathsf{M}(m))$ operations in $\FF_q$.
		\item Not considering the cost for the pre-computation, the cost for the algorithm is $\bigO(rm\log(m)\mathsf{M}(m))$ operations in $\FF_q$.
		\item Assuming that the inputs $u_1,\dots,u_r$ are independent and identically uniformly distributed, the average case only needs $\bigO(m\log(m)\mathsf{M}(m))$ operations.
	\end{enumerate}
\end{proposition}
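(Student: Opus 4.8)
The plan is to verify correctness first, then to bound the number of $\FF_q$-operations separately for the precomputation (step~2) and for the main loop, and finally to carry out the average-case analysis.

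\emph{Correctness.} Write $\alpha$ for the root of $f$ that corresponds to $\xi = X+(f)$ under the isomorphism $R = \FF_q[X]/(f)\cong\FF_q(\alpha) = \FF_{q^m}$. I would start from the equivalence recorded just before \cref{alg:verify-field-estension}: the identity $\FF_q(u_1(\alpha),\dots,u_r(\alpha)) = \FF_{q^m}$ holds if and only if for every prime divisor $p$ of $m$ there is an index $s$ with $u_s(\alpha)\notin\FF_{q^{m/p}}$, equivalently with $u_s(\alpha)^{q^{m/p}}\neq u_s(\alpha)$. Since each $u_s$ has coefficients in $\FF_q$, evaluation commutes with the Frobenius, so $u_s(\xi^{q^{m/p}})$ is the element of $R$ corresponding to $u_s(\alpha)^{q^{m/p}}$, while $u_s(\xi) = u_s\bmod f$ corresponds to $u_s(\alpha)$; hence the comparison inside the loop tests exactly these two field elements. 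I would then record the loop invariant: once indices $1,\dots,s$ have been processed, $M$ is precisely the set of primes $p\mid m$ for which no $s'\le s$ satisfies $u_{s'}(\alpha)\notin\FF_{q^{m/p}}$. It follows that $M$ becomes empty during the loop exactly when every prime divisor of $m$ has such a witness, so returning \texttt{True} is correct; and if the loop finishes with $M\neq\emptyset$, then for some $p$ all the $u_s(\alpha)$ lie in the proper subfield $\FF_{q^{m/p}}$, the generated field is properly contained in $\FF_{q^m}$, and \texttt{False} is correct.

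\emph{Claims (i) and (ii).} For (i), the integer factorization of $m$ in step~1 uses $\bigO(\sqrt m)$ integer operations, negligible against the field arithmetic; in step~2 each of the $k$ powers $\xi^{q^{m/p_j}}$ is computed by binary exponentiation with exponent $q^{m/p_j}\le q^m$, hence with $\bigO(m\log q)$ multiplications in $R$, each costing $\bigO(\mathsf{M}(m))$ operations in $\FF_q$; since $k$, the number of distinct prime divisors of $m$, is at most $\log_2 m$, the total is $\bigO(k\,m\log(q)\,\mathsf{M}(m)) = \bigO(m\log(m)\log(q)\,\mathsf{M}(m))$. For (ii), in each pass of the loop the current index $s$ triggers at most $k$ Horner evaluations of a polynomial of degree $<m$ at points of $R$, each costing $\bigO(m)$ multiplications in $R$, i.e.\ $\bigO(m\,\mathsf{M}(m))$ operations in $\FF_q$, while the equality tests and the updates of $M$ add only $\bigO(km)$, which is dominated; thus a pass costs $\bigO(k\,m\,\mathsf{M}(m)) = \bigO(m\log(m)\,\mathsf{M}(m))$, and summing over $s = 1,\dots,r$ gives $\bigO(r\,m\log(m)\,\mathsf{M}(m))$, as claimed (excluding, as in the statement, the precomputation of step~2).

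\emph{Claim (iii), which I expect to be the main obstacle.} The crux is that the loop almost always terminates at $s=1$. Since $1,\alpha,\dots,\alpha^{m-1}$ is an $\FF_q$-basis of $\FF_{q^m}$, the map sending a polynomial of degree $<m$ to its value at $\alpha$ is a bijection onto $\FF_{q^m}$; hence if $u_s$ is uniform among such polynomials, then $u_s(\alpha)$ is uniform in $\FF_{q^m}$, so for each prime $p\mid m$ we have $\Pr[u_s(\alpha)\in\FF_{q^{m/p}}] = q^{m/p-m}\le q^{-m/2}$. Let $T\le r$ be the number of loop passes actually carried out; the loop survives past pass $s$ only if some prime $p\mid m$ has $u_1(\alpha),\dots,u_s(\alpha)$ all in $\FF_{q^{m/p}}$, which by independence and a union bound over the $\le\log_2 m$ primes has probability at most $\log_2(m)\,q^{-sm/2}$. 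Summing the tail bounds, $\mathbb{E}[T]\le 1+\sum_{s\ge1}\log_2(m)\,q^{-sm/2} = \bigO(1)$ uniformly in $m$ and $q$. Since every pass costs $\bigO(m\log(m)\,\mathsf{M}(m))$ by the analysis of (ii), the expected cost, again excluding the one-time precomputation of step~2, is $\mathbb{E}[T]\cdot\bigO(m\log(m)\,\mathsf{M}(m)) = \bigO(m\log(m)\,\mathsf{M}(m))$. The points requiring care are the uniformity of $u_s(\alpha)$ via the basis bijection, the clean tail estimate for $T$ obtained from the union bound, and the observation that this average-case count, like that of (ii), is taken relative to the already-computed Frobenius powers.
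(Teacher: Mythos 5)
Your proposal is correct, and for the correctness claim and for parts (i) and (ii) it follows the same route as the paper: square-and-multiply for the $k=\bigO(\log m)$ Frobenius powers $\xi^{q^{m/p}}$ at $\bigO(m\log(q)\mathsf{M}(m))$ each, and Horner evaluation at $k$ points per index $s$ at $\bigO(m\log(m)\mathsf{M}(m))$ per pass. The only genuine divergence is in (iii): the paper invokes the Rabin-type density estimate $|\FFA_q(m)|/q^m\geqslant 1/2$ to conclude that already $u_1$ empties $M$ with probability at least $1/2$, so the algorithm terminates on average after two iterations; you instead bound, for each maximal proper subfield $\FF_{q^{m/p}}$, the probability $q^{m/p-m}\leqslant q^{-m/2}$ that $u_s(\alpha)$ lands in it, and combine a union bound over the at most $\log_2 m$ primes with a geometric tail estimate to get $\mathbb{E}[T]=\bigO(1)$. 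Both arguments rest on the same observation (which you make explicit and the paper leaves implicit) that $u\mapsto u(\alpha)$ is a bijection from polynomials of degree less than $m$ onto $\FF_{q^m}$, so $u_s(\alpha)$ is uniform there. Your version is self-contained, avoids the external density lemma, and in fact yields the slightly sharper conclusion that the expected number of passes is $1+o(1)$; the paper's version is shorter at the price of citing the bound from Rabin's paper.
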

\begin{proof}
	The correctness is clear form the discussion preceding \Cref{alg:verify-field-estension}.

	The cost for computing the elements $\xi^{q^{m/p}}$ for a $p\in M$ using square and multiply is bounded by $\bigO(m\log(q)\mathsf{M}(m))$. Since $|M|=k=\bigO(\log(m))$, step 2 requires $\bigO(m\log(m)\log(q)\mathsf{M}(m))$ operations in $\FF_q$.

	The evaluation of $u_s$ at $k = \bigO(\log(m))$ points using Horner's scheme takes $\bigO(m\log(m))$ operations in $R$, and thus $\bigO(m\log(m)\mathsf{M}(m))$ operations in $\FF_q$. The total cost for the for-loop is $\bigO(rm\log(m)\mathsf{M}(m))$.

	Now consider the average case. The estimate $1/2 \leqslant |\FFA_q(m)|/q^m$ can be shown in the same way as \cite[Lemma 2]{rabin1980ProbabilisticAlgorithmsFinite}. As a result, the probability of picking $u_1$ such that $u_1(\xi^{q^{m/p}}) \neq u_1(\xi)$ for all $p\in M$ is $\geqslant 1/2$. Hence, the algorithm terminates on average after two iterations which cost $\bigO(m\log(m)\mathsf{M}(m))$ operations in $\FF_q$.
\end{proof}

The next algorithm uses \Cref{alg:verify-field-estension} to compute an arbitrary number of random $\phi$-products that satisfy conjugate cancellation on the roots of fixed polynomials $f,g$.
We can use those to generate large amounts of irreducible polynomials $f\diamond_{\phi} g$. This last step of computing the composed product can be achieved through the algorithms from \cite{bostan2006FastComputationSpecial},  see also \cite{bostan2002FastComputationTwo,brawley1999computing}. Letting $E = \max\{m,n\}$, it is clear that each iteration of the following \Cref{Alg:compute-many-phi-products} requires $\bigO(E\log(E)\mathsf{M}(E))$ operations in $\FF_q$.

\begin{algorithm}
	\caption{Compute many $\phi$-products satisfying conjugate cancellation for fixed $\alpha,\beta\in\FFA_q(m)\times\FFA_q(n)$}\label{Alg:compute-many-phi-products}
	\begin{algorithmic}[1]
		\Require $m,n,k\in\NN$, polynomials $f,g\in\FF_q[X]$ with $\deg(f) = m$, $\deg(g)=n$.
		\Ensure $\phi_1,\dots,\phi_k\in\FF_q[X,Y]$ that satisfy conjugate cancellation on $(\alpha,\beta)$, where $\alpha$ is a root of $f$ and $\beta$ is a root of $g$.
		\While{$k>0$}
			\State Select $\phi\in\FF_q[X,Y]$ uniformly random with $\deg_X(\phi) < m$ and $\deg_Y(\phi) < n$.
			\State Obtain coefficient polynomials $\psi_0,\dots,\psi_{n-1}$ and $\chi_0,\dots,\chi_{m-1}$ from $\phi$.
			\State Call \Cref{alg:verify-field-estension} with inputs $f$ and $\psi_0,\dots,\psi_{n-1}$.
			\State Call \Cref{alg:verify-field-estension} with inputs $g$ and $\chi_0,\dots,\chi_{m-1}$.
			\If{both calls return True}
				\State $\phi_k = \phi$
				\State $k\gets k-1$
			\EndIf
			\EndWhile
		\State\Return $\phi_1,\dots,\phi_k$.
	\end{algorithmic}
\end{algorithm}

It is also possible to formulate the test for conjugate cancellation using matrices. This test, however, is slower than the one given above due to the relatively high cost of matrix-matrix multiplication. We  mention it here for its illustrative nature.

Assume again that we want to show conjugate cancellation for $(\alpha,\beta)\in\FFA_q(m)\times\FFA_q(m)$ for a $\phi$-product given by
\[
	\phi(X,Y) = \sum_{i=0}^{m-1}\sum_{j=0}^{n-1}c_{ij}X^iY^j = \sum_{i=0}^{m-1} \chi_i(Y)X^i = \sum_{j=0}^{n-1} \psi_j(X)Y^j\in\FF_q[X,Y].
\]
The so-called Petr--Berlekamp matrix $A\in\FF_q^{m\times m}$ (see e.g., \cite[Chapter 14.8]{vonzurgathen2013ModernComputerAlgebra}) is the matrix representation of the Frobenius automorphism in the ordered $\FF_q$-basis $(1,\alpha,\dots,\alpha^{m-1})$ of $\FF_{q^m}$. Let $x_j\in\FF_q^m$ be the representation of $\psi_j(\alpha)$ in that basis. Clearly, \[x_j = (c_{0j}, c_{1j},\dots,c_{m-1,j})^T. \] The equation $\psi_j(\alpha)^{q^d} = \psi_j(\alpha) $ holds if and only if $A^dx_j = x_j$.
Collecting the coefficients of $\phi$ into the matrix $C_\phi = (c_{ij})_{ij}\in\FF_q^{m\times n}$, the $j$-th column of $C_\phi$ is given by $x_j$. Therefore, the equation $(A^d-I)C_\phi = 0$ holds if and only if all coefficient polynomials $\psi_j(X)$ satisfy $\psi_j(\alpha) \in \FF_{q^d}$.
Analogously let $B$ the $n\times n$ Petr--Berlekamp matrix with respect to the ordered basis $(1,\beta,\dots,\beta^{n-1})$. With this, the following proposition is an immediate consequence of \Cref{Cor:Conj-Cancel-nice-coeffs}.

%Analogously let $B$ the $n\times n$ Petr--Berlekamp matrix with respect to the ordered basis $(1,\beta,\dots,\beta^{n-1})$ induced by an element $\beta\in\FFA_q(n)$. Similarly, the rows of $C_\phi$ represent the evaluations of $\chi_i(\beta)$, where $\chi_i(Y) = \sum_{j=0}^{n-1} c_{ij}Y^j$ are the coefficient polynomials of $\psi$ in $Y$ for $i=1,\dots,m-1$. With this, the following proposition follows immediately from \Cref{Cor:Conj-Cancel-nice-coeffs}.
\begin{proposition}
	Let $m,n$ be integers with $\gcd(m,n) = 1$ and $\phi,C_\phi,A,B$ be defined as above. Then the $\phi$-product satisfies conjugate cancellation on the conjugates of $\alpha$ and $\beta$ if and only if
	\begin{enumerate}
		\item $(A^{m/p}-I)C_\phi \neq 0$ for all prime divisors $p$ of $m$,
		\item $(B^{n/p}-I)C_\phi^T \neq 0$ for all prime divisors $p$ of $n$.
	\end{enumerate}
\end{proposition}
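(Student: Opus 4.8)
The plan is to read off both equivalences directly from \Cref{Cor:Conj-Cancel-nice-coeffs} after translating its two field-generation conditions into the language of the Petr--Berlekamp matrices. By that theorem, the $\phi$-product satisfies conjugate cancellation on the conjugates of $\alpha$ and $\beta$ if and only if $\FF_q(\psi_0(\alpha),\dots,\psi_{n-1}(\alpha)) = \FF_{q^m}$ and $\FF_q(\chi_0(\beta),\dots,\chi_{m-1}(\beta)) = \FF_{q^n}$. Thus it suffices to show that the first of these equalities is equivalent to condition (i) and the second to condition (ii).

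For the first equality I would use the subfield reduction already exploited in the discussion preceding \Cref{alg:verify-field-estension}: a family of elements of $\FF_{q^m}$ generates $\FF_{q^m}$ over $\FF_q$ if and only if, for every prime divisor $p$ of $m$, at least one of them fails to lie in the maximal subfield $\FF_{q^{m/p}}$. Applied to $\psi_0(\alpha),\dots,\psi_{n-1}(\alpha)$, this says $\FF_q(\psi_0(\alpha),\dots,\psi_{n-1}(\alpha)) = \FF_{q^m}$ precisely when, for every prime $p \mid m$, there is an index $j$ with $\psi_j(\alpha)^{q^{m/p}} \neq \psi_j(\alpha)$. I would then invoke the identity established just above the statement, namely that $\psi_j(\alpha)^{q^d} = \psi_j(\alpha)$ holds if and only if $A^d x_j = x_j$, where $x_j = (c_{0j},\dots,c_{m-1,j})^T$ is the $j$-th column of $C_\phi$. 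Hence the existence of such a $j$ for a fixed $p$ is exactly the assertion $(A^{m/p}-I)C_\phi \neq 0$, since the columns of $(A^{m/p}-I)C_\phi$ are the vectors $(A^{m/p}-I)x_j$. Collecting this over all prime divisors $p$ of $m$ yields the equivalence of the first field equality with condition (i).

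For the second equality the argument is identical after switching to the representation $\phi(X,Y) = \sum_{i=0}^{m-1}\chi_i(Y)X^i$, so that $\chi_i(Y) = \sum_{j=0}^{n-1} c_{ij}Y^j$. The coordinate vector of $\chi_i(\beta)$ in the basis $(1,\beta,\dots,\beta^{n-1})$ is the $i$-th row of $C_\phi$, equivalently the $i$-th column of $C_\phi^T$, and $\chi_i(\beta)^{q^d} = \chi_i(\beta)$ holds if and only if $B^d$ fixes this column. Therefore, for a prime $p \mid n$, some $\chi_i(\beta)$ escapes $\FF_{q^{n/p}}$ if and only if $(B^{n/p}-I)C_\phi^T \neq 0$, and ranging over the prime divisors of $n$ gives condition (ii). The only point requiring a little care is this bookkeeping with the transpose: in the $\psi_j$-representation the relevant coordinate vectors are the columns of $C_\phi$, whereas in the $\chi_i$-representation they are its rows, which is exactly why $C_\phi$ appears in (i) and $C_\phi^T$ in (ii). No genuine obstacle arises beyond this indexing, as the statement is a direct reformulation of \Cref{Cor:Conj-Cancel-nice-coeffs}.
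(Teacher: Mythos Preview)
Your proposal is correct and follows exactly the route the paper intends: the paper itself offers no proof beyond the remark that the proposition ``is an immediate consequence of \Cref{Cor:Conj-Cancel-nice-coeffs}'', together with the observation just before it that $(A^d-I)C_\phi = 0$ holds if and only if every $\psi_j(\alpha)\in\FF_{q^d}$. You have simply spelled out those details, including the maximal-subfield reduction and the bookkeeping with $C_\phi^T$ for the $\chi_i$ side.
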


\begin{example}
	Let $q = 3$.  We want to construct irreducible polynomials from $f = x^4+x^2+2$ and $g = x^3+2x+1$. This leads to the Petr--Berlekamp matrices
	\[A = \begin{pmatrix}
		1 & 0 & 2 & 0 \\
		0 & 0 & 0 & 2 \\
		0 & 0 & 2 & 0 \\
		0 & 1 & 0 & 0
	\end{pmatrix}
	\quad\text{and}\quad
	B = \begin{pmatrix}
		1 & 2 & 1 \\
		0 & 1 & 1 \\
		0 & 0 & 1
	\end{pmatrix}.\]
	Now we may choose any suitable $\phi$-product. For this example we choose
	\[ \phi(z_1,z_2) = z_1^2+z_2^2+z_1+2z_2. \]
	The corresponding coefficient matrix is given by
	\[ C_\phi = \begin{pmatrix}
		0 & 2 & 1 \\
		1 & 0 & 0 \\
		1 & 0 & 0 \\
		0 & 0 & 0
	\end{pmatrix}.\]
	Since $n=3$ is a prime, there are no subfields except the base field $\FF_3$. We have $(B^1-I)C_\phi^T\neq 0$. For $m = 4$ we need to test for the subfield of size $3^2$. By checking
	\[ (A^2-I)C_\phi = \begin{pmatrix}
		0 & 0 & 0 \\
		1 & 0 & 0 \\
		0 & 0 & 0 \\
		0 & 0 & 0
	\end{pmatrix} \neq 0 \]
	we can confirm that $\phi$ satisfies conjugate cancellation on $\{\alpha,\alpha^3,\alpha^9\}\times\{\beta,\beta^3,\beta^9,\beta^{27}\}$. A direct computation yields the irreducible polynomial
	\[ f \diamond_\phi g = x^{12} + x^{11} + x^{10} + x^8 + 2x^7 + x^6 + x^5 + x^3 + 2x^2 + x + 2. \]
	On the other hand, if we use the product $\zeta(z_1,z_2) = z_1^2 + z_2^2 + 2z_2$ we get $(A^2-I)C_\zeta = 0$, so the product does not satisfy conjugate cancellation. Indeed, we see that
	\[ f \diamond_\zeta g = (x^6 + 2x^5 + x^3 + x^2 + 2x + 1)^2 \]
    is not irreducible.
	%This is exactly what is to be expected by \Cref{Thm:Projection-Rank}. {\color{green}{FIX}}
\end{example}

\subsection{Sufficient criteria for conjugate cancellation}\label{Sec:Construction-Simple}

We consider two special cases in which we can easily verify conjugate cancellation using \Cref{Thm:Conj-Cancel,Cor:Conj-Cancel-nice-coeffs}.

\begin{proposition}
	Let $m,n$ be coprime integers  and let $m_1,n_1$ be the respective smallest prime divisors. Let $\phi(X,Y) = \sum_{i=0}^{m-1}\sum_{j=0}^{n-1}c_{ij}X^iY^j$. If the matrix $C = (c_{ij})_{ij}\in\FF_q^{m\times n}$ has $\rank(C) > \max\{\frac{m}{m_1},\frac{n}{n_1}\}$, then $\phi$ satisfies conjugate cancellation on $\FFA_q(m)\times\FFA_q(n)$.
\end{proposition}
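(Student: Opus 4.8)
The plan is to deduce the statement directly from \Cref{Thm:Conj-Cancel} (or \Cref{Cor:Conj-Cancel-nice-coeffs}) together with the rank decomposition described in \Cref{Rmk:Existence-Special-Repr}. First I would write $C = \sum_{s=1}^r a_s b_s^T$ with $r = \rank(C)$ and with $a_1,\dots,a_r \in \FF_q^m$ and $b_1,\dots,b_r \in \FF_q^n$ each linearly independent over $\FF_q$, and set $u_s(X) = \sum_{i=0}^{m-1} a_{si}X^i$, $v_s(Y) = \sum_{j=0}^{n-1} b_{sj}Y^j$, so that $\phi(X,Y) = \sum_{s=1}^r u_s(X)v_s(Y)$ is a representation to which \Cref{Thm:Conj-Cancel} applies. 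By that theorem it suffices to show $\FF_q(u_1(\alpha),\dots,u_r(\alpha)) = \FF_{q^m}$ for every $\alpha \in \FFA_q(m)$, and symmetrically $\FF_q(v_1(\beta),\dots,v_r(\beta)) = \FF_{q^n}$ for every $\beta \in \FFA_q(n)$.

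\textbf{Key step.} The heart of the argument is the following counting/dimension observation: if $\FF_q(u_1(\alpha),\dots,u_r(\alpha)) \subsetneq \FF_{q^m}$, then (as noted in \Cref{Sec:Algorithm}) all the $u_s(\alpha)$ lie in some maximal subfield $\FF_{q^{m/p}}$ with $p$ a prime divisor of $m$, hence $p \ge m_1$ and $m/p \le m/m_1$. The condition $u_s(\alpha) \in \FF_{q^{m/p}}$ means $u_s(\alpha) = u_s(\alpha)^{q^{m/p}}$, i.e. $\alpha$ is a root of $u_s(X)^{q^{m/p}} - u_s(X)$ regarded mod the minimal polynomial of $\alpha$; but a cleaner route is to use the Petr--Berlekamp matrix as in the preceding \namecref{Cor:Conj-Cancel-nice-coeffs}. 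Concretely, $u_s(\alpha) \in \FF_{q^{m/p}}$ for all $s$ is equivalent to $(A^{m/p} - I)C = 0$, where $A$ is the Petr--Berlekamp matrix of $\alpha$. The matrix $A^{m/p} - I$ is the matrix (in the basis $1,\alpha,\dots,\alpha^{m-1}$) of the $\FF_q$-linear map $x \mapsto x^{q^{m/p}} - x$ on $\FF_{q^m}$, whose kernel is exactly $\FF_{q^{m/p}}$, a space of dimension $m/p \le m/m_1$ over $\FF_q$; hence $\rank(A^{m/p}-I) = m - m/p \ge m - m/m_1$. If $(A^{m/p}-I)C = 0$ then the column space of $C$ is contained in $\ker(A^{m/p}-I)$, so $\rank(C) = \rank(\text{column space of } C) \le m/p \le m/m_1$. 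Thus $\rank(C) > m/m_1$ forces $\FF_q(u_1(\alpha),\dots,u_r(\alpha)) = \FF_{q^m}$ for every $\alpha \in \FFA_q(m)$; by the same argument applied to $C^T$ and the Petr--Berlekamp matrix $B$ of $\beta$, the condition $\rank(C) > n/n_1$ forces $\FF_q(v_1(\beta),\dots,v_r(\beta)) = \FF_{q^n}$ for every $\beta \in \FFA_q(n)$. Combining, $\rank(C) > \max\{m/m_1, n/n_1\}$ yields both conditions of \Cref{Thm:Conj-Cancel}, hence conjugate cancellation on $\FFA_q(m) \times \FFA_q(n)$.

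\textbf{Main obstacle.} There is no real obstacle of substance here: once one observes that $u_s(\alpha)$ lying in a subfield $\FF_{q^{m/p}}$ constrains the column space of $C$ to the $(m/p)$-dimensional kernel of $x \mapsto x^{q^{m/p}}-x$, the rank inequality closes the argument immediately. The only point requiring mild care is the bookkeeping that any proper subfield of $\FF_{q^m}$ is contained in a \emph{maximal} one $\FF_{q^{m/p}}$ with $p$ the \emph{smallest} prime dividing $m$ giving the \emph{largest} such subfield $\FF_{q^{m/m_1}}$ — so that $m/p \le m/m_1$ in all cases — and the symmetric statement for $n$; this is exactly the reasoning already spelled out before \Cref{alg:verify-field-estension}. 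One should also note that $r = \rank(C)$, so the hypothesis is literally a hypothesis on $r$, and the linear independence of $\{u_s\}$ and $\{v_s\}$ needed to invoke \Cref{Thm:Conj-Cancel} is automatic from the rank-$1$ decomposition as recorded in \Cref{Rmk:Existence-Special-Repr}.
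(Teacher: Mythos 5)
Your proof is correct and follows essentially the same route as the paper: both use the rank-$r$ decomposition of $C$ from \Cref{Rmk:Existence-Special-Repr} and then a dimension count showing that $r$ linearly independent elements cannot all lie in a proper subfield $\FF_{q^{m/p}}$, whose $\FF_q$-dimension is $m/p \leq m/m_1 < r$. The paper phrases this count via the linear independence of the evaluations $u_1(\alpha),\dots,u_r(\alpha)$ (so that $\FF_q(u_1(\alpha),\dots,u_r(\alpha)) = \FF_{q^k}$ forces $k \geq r$), while you phrase the equivalent count via the Petr--Berlekamp matrix and the column space of $C$.
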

\begin{proof}
	As pointed out in \Cref{Rmk:Existence-Special-Repr}, we can always find linearly independent vectors polynomials $u_1,\dots,u_r\in\FF_q[X]$ and $v_1,\dots,v_r\in\FF_q[Y]$ such that we have $\phi(X,Y) = \sum_{s=1}^ru_s(X)v_s(Y)$. In particular, the number of products in this representation is $r = \rank(C)$.
	Let $\alpha\in\FFA_q(m)$ and $k\mid m$ such that
	\begin{equation*}
		\FF_q(u_1(\alpha),u_2(\alpha),\dots,u_r(\alpha)) = \FF_{q^k}.
	\end{equation*}	As pointed out in the proof of \Cref{Thm:Conj-Cancel}, the $r$ evaluations $u_1(\alpha),u_2(\alpha),\dots,u_r(\alpha)$ are linearly independent over $\FF_q$. Hence, $k = [\FF_{q^k}:\FF_q] \geqslant r > m/m_1$. Since $m/m_1$ is the largest nontrivial divisor of $m$, and $k\mid m$, it follows that $k = m$. An identical argument holds for the $v_i$ under the condition $r>n/n_1$. Conjugate cancellation on $\FFA_q(m)\times\FFA_q(n)$ then follows by \Cref{Thm:Conj-Cancel}.
\end{proof}

A second strategy for constructing diamond products satisfying conjugate cancellation can be found in \cite[Theorem 9]{munemasa2016NoteBrawleyCarlitzTheorem}. We provide a similar criterion, which has fewer restrictions on the coefficient polynomials. The approach is as follows. To apply \Cref{Thm:Conj-Cancel} or \Cref{Cor:Conj-Cancel-nice-coeffs} we can use precisely those polynomials $u$ as coefficient polynomials satisfying $\FF_q(u(\alpha)) = \FF_{q^m}$ for all $\alpha\in \Gap_m\subseteq\FFA_q(m)$.
The following lemma shows that coefficient polynomials with small degree satisfy this property. We give an alternative proof to the original one.

\begin{lemma}\cite[Lemma 5]{munemasa2016NoteBrawleyCarlitzTheorem}
	Let $\psi\in\FF_q[X]$ and $\alpha\in\FFA_q(m)$. Then
	\[ m \leqslant [\FF_q(\psi(\alpha)) : \FF_q]\deg(\psi). \]
\end{lemma}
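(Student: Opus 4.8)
I want to show that $m \le [\FF_q(\psi(\alpha)):\FF_q]\cdot\deg(\psi)$ for $\alpha\in\FFA_q(m)$. Write $\gamma = \psi(\alpha)$ and let $k = [\FF_q(\gamma):\FF_q]$, so $\gamma\in\FF_{q^k}$ with $k\mid m$. The key observation is that $\alpha$ is a root of the polynomial $\psi(X) - \gamma \in \FF_{q^k}[X]$, which is nonzero and has degree $\deg(\psi)$. Hence the degree of $\alpha$ over $\FF_{q^k}$ is at most $\deg(\psi)$. On the other hand, since $\alpha\in\FFA_q(m)$ generates $\FF_{q^m}$ over $\FF_q$, it also generates $\FF_{q^m}$ over $\FF_{q^k}$ (as $\FF_{q^k}\subseteq\FF_{q^m}$), so $[\FF_{q^m}:\FF_{q^k}] = m/k$ is exactly the degree of $\alpha$ over $\FF_{q^k}$.

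Combining these, $m/k \le \deg(\psi)$, which rearranges to $m \le k\deg(\psi) = [\FF_q(\psi(\alpha)):\FF_q]\deg(\psi)$, as desired.

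There is essentially no obstacle here: the only thing to be slightly careful about is the edge case where $\psi$ is constant (then $\gamma\in\FF_q$, $k=1$, and $\psi(X)-\gamma$ is the zero polynomial, so the argument via "$\alpha$ is a root of a degree-$\deg\psi$ polynomial" degenerates). But in that case $\psi(\alpha)\in\FF_q$ forces $m=1$ or makes the inequality trivial once one notes that a constant $\psi$ can only occur meaningfully when $m=1$; more cleanly, if $\psi$ is nonconstant then $\psi(X)-\gamma$ is a nonzero polynomial of degree $\deg(\psi)\ge 1$ over $\FF_{q^k}$ having $\alpha$ as a root, giving $m/k\le\deg\psi$ directly, and if $\psi$ is constant the claimed inequality reads $m\le 0$ unless $m=1$, so one should just assume $\deg(\psi)\ge 1$, which is the only case of interest. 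The heart of the proof is the single field-theoretic inequality $[\FF_{q^m}:\FF_{q^k}]\le\deg(\psi)$ coming from $\alpha$ being a root of $\psi(X)-\psi(\alpha)$ over the smaller field $\FF_{q^k}=\FF_q(\psi(\alpha))$.
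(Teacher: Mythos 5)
Your proof is correct, and it takes a mildly different route from the paper's. The paper stays entirely over the base field: writing $f$ and $h$ for the minimal polynomials of $\alpha$ and $\psi(\alpha)$ over $\FF_q$, it observes that $(h\circ\psi)(\alpha)=0$, hence $f\mid h\circ\psi$ in $\FF_q[X]$, and reads off $m=\deg(f)\leq\deg(h)\deg(\psi)$ in one line. You instead pass to the intermediate field $\FF_{q^k}=\FF_q(\psi(\alpha))$, note that $\alpha$ is a root of $\psi(X)-\psi(\alpha)\in\FF_{q^k}[X]$, conclude $[\FF_{q^m}:\FF_{q^k}]\leq\deg(\psi)$, and finish with the tower law $m=k\cdot[\FF_{q^m}:\FF_{q^k}]$. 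The two arguments are dual phrasings of the same degree count: your version makes the intermediate field explicit (which matches how the lemma is actually used later, where one wants $[\FF_q(\psi_j(\alpha)):\FF_q]>m/m_1$), while the paper's divisibility argument is slightly more compact and avoids invoking the tower law. Your attention to the constant-$\psi$ edge case is reasonable and actually applies to the paper's proof too (there $h\circ\psi$ would be the zero polynomial), but it is a non-issue in context since the lemma is only ever applied to nonconstant coefficient polynomials.
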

\begin{proof}
	Let $f$ and $h$ be the minimal polynomials of $\alpha$ and $\psi(\alpha)$ over $\FF_q$ respectively. Because $h(\psi(\alpha)) = 0$ we get $f\mid h\circ \psi$. Therefore, $\deg(f) \leqslant \deg(h)\deg(\psi)$ which yields the desired result.
\end{proof}

This yields the following simple criterion which was also observed in \cite{irimagzi2023DiamondProductsEnsuring,krompholz}.

\begin{proposition}\label{thm:simple-criterion}
	Let $m,n$ be coprime integers and let $m_1,n_1$ be the respective smallest prime divisors. Let \[ \phi(X,Y) = \sum_{i=0}^{m-1} \chi_i(Y)X^i = \sum_{j=0}^{n-1} \psi_j(X)Y^j\in\FF_q[X,Y].\]
	If there exists an index $i$, such that $\deg(\chi_i) < n_1$, and an index $j$, such that $\deg(\psi_j) < m_1$, then $\phi$ satisfies conjugate cancellation on $\FFA_q(m)\times\FFA_q(n)$.
\end{proposition}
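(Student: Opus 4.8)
The plan is to deduce this directly from \Cref{Cor:Conj-Cancel-nice-coeffs}, whose criterion demands that $\FF_q(\psi_0(\alpha),\dots,\psi_{n-1}(\alpha)) = \FF_{q^m}$ and $\FF_q(\chi_0(\beta),\dots,\chi_{m-1}(\beta)) = \FF_{q^n}$ for all $\alpha\in\FFA_q(m)$, $\beta\in\FFA_q(n)$. To establish the first of these it suffices to exhibit a single coefficient polynomial $\psi_j$ whose evaluation $\psi_j(\alpha)$ already generates all of $\FF_{q^m}$ over $\FF_q$, since then the compositum of all the $\psi_j(\alpha)$ is at least $\FF_{q^m}$, and it is contained in $\FF_{q^m}$ because each $\psi_j\in\FF_q[X]$ and $\alpha\in\FF_{q^m}$. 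The hypothesis hands us exactly such an index $j$ with $\deg(\psi_j) < m_1$.

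The key step is to apply the preceding lemma (\cite[Lemma 5]{munemasa2016NoteBrawleyCarlitzTheorem}) to $\psi = \psi_j$: it gives $m \leqslant [\FF_q(\psi_j(\alpha)):\FF_q]\cdot\deg(\psi_j)$. Write $d = [\FF_q(\psi_j(\alpha)):\FF_q]$, so $d\mid m$. From $m \leqslant d\cdot\deg(\psi_j) < d\cdot m_1$ we get $d > m/m_1$. Since $m_1$ is the smallest prime divisor of $m$, the quantity $m/m_1$ is the largest proper divisor of $m$; any divisor $d$ of $m$ strictly exceeding $m/m_1$ must therefore equal $m$. Hence $\FF_q(\psi_j(\alpha)) = \FF_{q^m}$, and a fortiori property (i) of \Cref{Cor:Conj-Cancel-nice-coeffs} holds. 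The symmetric argument with $\chi_i$ in place of $\psi_j$, $n$ in place of $m$, and $n_1$ in place of $m_1$ gives property (ii). One should note the small edge case $\deg(\psi_j) = 0$: then $\psi_j(\alpha)\in\FF_q$, so the lemma forces $m = 1$, and the statement is vacuous or trivial in that regime (and the excerpt's standing assumption $m,n>1$ in \Cref{Cor:Conj-Cancel-nice-coeffs} means we are really in the case $\deg(\psi_j)\geq 1$, where the bound $m/m_1 < d$ is still what we need). Conjugate cancellation on $\FFA_q(m)\times\FFA_q(n)$ then follows from \Cref{Cor:Conj-Cancel-nice-coeffs}.

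I do not expect any real obstacle here; the only point requiring a moment's care is the elementary number-theoretic observation that a divisor of $m$ exceeding $m/m_1$ is forced to be $m$, which is where the minimality of the prime $m_1$ is used. Everything else is bookkeeping combining the degree lemma with \Cref{Cor:Conj-Cancel-nice-coeffs}.
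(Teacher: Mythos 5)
Your proof is correct and follows essentially the same route as the paper: apply the degree lemma to the single coefficient polynomial of small degree to force $[\FF_q(\psi_j(\alpha)):\FF_q]>m/m_1$, conclude equality with $m$ since $m/m_1$ is the largest proper divisor, and then invoke \Cref{Cor:Conj-Cancel-nice-coeffs}. The extra remark about the constant-polynomial edge case is a reasonable (if slightly informal) caution that the paper itself does not address, but it does not change the argument.
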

\begin{proof}
	From the lemma above it is clear that $\frac{m}{m_1} < [\FF_q(\psi_j(\alpha)):\FF_q]$ for all $\alpha\in\FFA_q(m)$. Since $[\FF_q(\psi_j(\alpha)):\FF_q]$ is a divisor of $m$, it follows that $[\FF_q(\psi_j(\alpha)):\FF_q]=m$, and thus  $\FF_q(\psi_j(\alpha))=\FF_{q^m}$. The claim follows from \Cref{Cor:Conj-Cancel-nice-coeffs}.
\end{proof}

The criterion above yields a simple argument that the diamond product presented in \Cref{Ex:CC-Less-Strict} satisfies conjugate cancellation.

\begin{example}
	We return to \Cref{Ex:CC-Less-Strict} where we constructed a $\phi$-product with $\Ga=\Gb = \FF_{2^6}$ given by
	\[\phi(X,Y) = XY(Y+1).\]
	We have already seen that it does not satisfy weak cancellation. It does, however, satisfy conjugate cancellation on $\Gap_2\times\Gbp_3 = \FFA_2(2)\times\FFA_2(3)$. Letting $u_1(X) = X$ and $v_1(Y) = Y(Y+1)$, it follows easily that $v_1(\beta)\in\FFA_2(3)$ for all $\beta\in\FFA_2(3)$. Applying \Cref{thm:simple-criterion} shows that the $\phi$-product satisfies conjugate cancellation.
\end{example}

\begin{remark}
	It is of interest when an element $\phi(\alpha,\beta)$ is primitive in the extension field, i.e., is a generator of its multiplicative group. Observe that this cannot be the case if the coefficient matrix of $\phi$ has rank 1, in which case we can write $\phi(X,Y) = u_1(X)v_1(Y)$. Indeed,
	since $u_1(\alpha)\in\FF_{q^m}$ and $v_1(\beta)\in\FF_{q^n}$ it follows $k = \ord(u_1(\alpha)) \leqslant  q^m-1$ and $\ell = \ord(v_1(\beta)) \leqslant q^n-1$. This implies
	\[ \ord(u_1(\alpha)v_1(\beta)) = \lcm(k, \ell) \leqslant k\ell = q^{m+n}-q^m-q^n+1 < q^{mn}-1,\]
	where the last inequality holds in the relevant cases $m,n > 1$.
\end{remark}

\section{Diamond products from linearized polynomials}\label{Sec:Normal}
In this section we investigate diamond products arising from polynomials $\phi\in\FF_q[X,Y]$ of the form
\[ \phi(X,Y) = \sum_{i=0}^{m-1}\sum_{j=0}^{n-1}c_{ij}X^{q^i}Y^{q^j}, \]
i.e., $\phi$ is bilinear over $\FF_q$.
We can apply the same methods, i.e., \Cref{Lma:Coeff-Polys,Lma:Main-Lemma} for this case as well.
The difference in the approach here is that instead of polynomial bases of the form $\{1,\alpha,\dots,\alpha^{m-1}\}$, we need to use \emph{normal bases}. An element $\alpha\in\FF_{q^m}$ is called normal over $\FF_q$, if the conjugates $\alpha,\alpha^q,\dots,\alpha^{q^{m-1}}$ are linearly independent over $\FF_q$. Define
\[ \mathcal{N}_q(m) := \{\alpha\in\FF_{q^m}:\alpha\text{ is normal over } \FF_q\}. \]
It is well known that $\mathcal{N}_q(m) \neq \emptyset$ for any choice of $q,m$.
Obviously, $\mathcal{N}_q(m)$ is a Frobenius invariant set, and the inclusion $\mathcal{N}_q(m) \subseteq \FFA_q(m)$ holds. Additionally, recall that we call $u\in\FF_q[X]$ a $q$-polynomial if it has the form $u(X) = \sum_i c_i X^{q^i}$.

\subsection{Equivalent and sufficient criteria for conjugate cancellation}
We give an analog of \Cref{Thm:Conj-Cancel} for the case of bilinear $\phi$-products. Note that compared to \Cref{Thm:Conj-Cancel}, we can choose much larger degrees. The proof of the theorem is essentially identical to \Cref{Thm:Conj-Cancel}, with polynomial bases exchanged for normal bases.
\begin{theorem}\label{Lma:Coeff-Polys-Linear}
	Let $m,n > 1$ be coprime and let $\phi\in\FF_q[X,Y]$ be bilinear over $\FF_q$ with a representation
	\[ \phi(X,Y) = \sum_{s=1}^r u_s(X)v_s(Y), \]
	where the $q$-polynomials $u_s\in\FF_q[X],~v_s\in\FF_q[Y]$ are linearly independent over $\FF_q$ with $\deg(u_s) < q^m$ and $\deg(v_s)<q^n$ for all $s$.
	The following are equivalent
	\begin{enumerate}
		\item The $\phi$-product satisfies conjugate cancellation on $\mathcal N_q(m)\times \mathcal N_q(n)$. \label{itm:41a}
		\item There exist $(\alpha,\beta)\in\mathcal{N}_q(m)\times\mathcal{N}_q(n)$ with $\FF_q(u_1(\alpha),\dots,u_r(\alpha)) = \FF_{q^{m}}$ and $\FF_q(v_1(\beta),\dots,v_r(\beta)) = \FF_{q^{n}}$. \label{itm:41b}
		\item For all $(\alpha,\beta)\in\mathcal{N}_q(m)\times\mathcal{N}_q(n)$ we have $\FF_q(u_1(\alpha),\dots,u_r(\alpha)) = \FF_{q^{m}}$ and $\FF_q(v_1(\beta),\dots,v_r(\beta)) = \FF_{q^{n}}$. \label{itm:41c}
	\end{enumerate}
\end{theorem}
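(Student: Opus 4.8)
The plan is to follow the proof of \Cref{Thm:Conj-Cancel} almost line by line, replacing the power basis $1,\alpha,\dots,\alpha^{m-1}$ by the normal basis $\alpha,\alpha^q,\dots,\alpha^{q^{m-1}}$ and the degree bound $\deg(v_s)<n$ by its linearized counterpart $\deg(v_s)<q^n$. The technical heart is the claim that, for $\beta\in\mathcal{N}_q(n)$, the evaluations $v_1(\beta),\dots,v_r(\beta)$ are linearly independent over $\FF_{q^m}$, and symmetrically that $u_1(\alpha),\dots,u_r(\alpha)$ are linearly independent over $\FF_{q^n}$ for $\alpha\in\mathcal{N}_q(m)$. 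Granting this, \Cref{Lma:Main-Lemma}, together with the version obtained by interchanging the roles of the $u_s$ and $v_s$, shows that for a \emph{fixed} pair $(\alpha,\beta)\in\mathcal{N}_q(m)\times\mathcal{N}_q(n)$ the $\phi$-product satisfies conjugate cancellation for $(\alpha,\beta)$ if and only if $\FF_q(u_1(\alpha),\dots,u_r(\alpha))=\FF_{q^m}$ and $\FF_q(v_1(\beta),\dots,v_r(\beta))=\FF_{q^n}$. Since $\mathcal{N}_q(m)$ and $\mathcal{N}_q(n)$ are nonempty, this yields the equivalence (i) $\Leftrightarrow$ (iii) immediately, while (iii) $\Rightarrow$ (ii) is trivial.

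To prove the independence claim I would argue exactly as in \Cref{Thm:Conj-Cancel}. Writing $v_s(Y)=\sum_{j=0}^{n-1}b_{sj}Y^{q^j}$, the matrix $(b_{sj})\in\FF_q^{r\times n}$ has $\FF_q$-linearly independent rows, hence an $r\times r$ minor with nonzero determinant; this determinant is still nonzero over $\FF_{q^m}$, so $v_1,\dots,v_r$ remain linearly independent as linearized polynomials over $\FF_{q^m}$. Now suppose $\sum_{s}c_sv_s(\beta)=0$ with $c_s\in\FF_{q^m}$. Since scaling a linearized polynomial by a constant keeps it linearized, $L:=\sum_s c_sv_s$ is a $q$-polynomial over $\FF_{q^m}$ with $\deg(L)<q^n$ and $L(\beta)=0$. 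The set of roots of $L$ in $\FFC_q$ is an $\FF_q$-subspace, and since the coefficients of $L$ lie in $\FF_{q^m}$ one has $L(\gamma^{q^m})=L(\gamma)^{q^m}$, so this root set is stable under raising to the $q^m$-th power. It therefore contains the $\FF_q$-span of $\{\beta^{q^{jm}}:j\ge 0\}$; because $\gcd(m,n)=1$ and $\beta^{q^n}=\beta$, the exponents $jm$ run through all residues modulo $n$, so that span equals the $\FF_q$-span of $\beta,\beta^q,\dots,\beta^{q^{n-1}}$, which is all of $\FF_{q^n}$ because $\beta$ is normal over $\FF_q$. Hence $L$ has at least $q^n$ roots while $\deg(L)<q^n$, forcing $L=0$, and linear independence of the $v_s$ over $\FF_{q^m}$ then gives $c_1=\dots=c_r=0$. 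Exchanging $m$ and $n$ handles the $u_s$.

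Finally, to close the cycle I would prove (ii) $\Rightarrow$ (iii) by showing that the two field-generation conditions are independent of the chosen normal elements. Writing $u_s(X)=\sum_{i=0}^{m-1}a_{si}X^{q^i}$ and fixing $\alpha\in\mathcal{N}_q(m)$, the equality $\FF_q(u_1(\alpha),\dots,u_r(\alpha))=\FF_{q^m}$ fails exactly when some prime $p\mid m$ satisfies $u_s(\alpha)=u_s(\alpha)^{q^{m/p}}$ for all $s$. Expanding in the normal basis $\alpha,\alpha^q,\dots,\alpha^{q^{m-1}}$ and comparing coefficients (using that these conjugates are $\FF_q$-linearly independent), this is equivalent to $a_{si}=a_{s,\,i-m/p}$ for all $s$ and all $i$, with indices read modulo $m$ — a condition on the coefficient vectors of $u_1,\dots,u_r$ alone. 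Thus the condition for the $u_s$ in (iii) holds for one $\alpha\in\mathcal{N}_q(m)$ if and only if it holds for all of them, and symmetrically for the $v_s$ and $\mathcal{N}_q(n)$; combined with the per-pair characterization above, this gives (ii) $\Rightarrow$ (iii).

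I expect the root-counting step in the independence claim to be the main obstacle — not because it is deep, but because it must be arranged carefully: one has to notice that the root set of a linearized polynomial with coefficients in $\FF_{q^m}$ is simultaneously $\FF_q$-linear and stable under the $q^m$-th power map, and then use $\gcd(m,n)=1$ together with the normality of $\beta$ over $\FF_q$ to force all of $\FF_{q^n}$ into this root set, contradicting $\deg(v_s)<q^n$. Everything else is a faithful transcription of the reasoning behind \Cref{Thm:Conj-Cancel} and \Cref{Lma:Main-Lemma}, with normal bases in place of power bases.
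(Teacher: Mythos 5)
Your proposal is correct and follows the same overall blueprint as the paper's proof: establish that the evaluations $v_1(\beta),\dots,v_r(\beta)$ are linearly independent over $\FF_{q^m}$ (and symmetrically for the $u_s$), feed this into \Cref{Lma:Main-Lemma} to get the per-pair characterization, and then show that the field-generation conditions are independent of the chosen normal elements. You differ from the paper in how the two key sub-steps are carried out. For the independence claim, the paper simply cites \cite[Theorem 2.3.2]{gao1993NormalBasesFinite}, which says that an element normal over $\FF_q$ remains normal over $\FF_{q^m}$ when $\gcd(m,n)=1$; once $\beta\in\mathcal{N}_{q^m}(n)$ is known, the vanishing of $\sum_s c_sv_s(\beta)$ forces $\sum_s c_sv_s=0$ directly by comparing coefficients in the normal basis, and the rank argument finishes as in \Cref{Thm:Conj-Cancel}. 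Your root-counting argument --- the root set of $L=\sum_s c_sv_s$ is an $\FF_q$-subspace stable under $x\mapsto x^{q^m}$, hence contains all of $\FF_{q^n}$, contradicting $\deg(L)<q^n$ unless $L=0$ --- is a correct, self-contained substitute that in effect reproves the consequence of Gao's normality-descent theorem that is actually needed. For (ii)$\Rightarrow$(iii), the paper writes the given $\alpha$ in the normal basis generated by an arbitrary $\gamma\in\mathcal{N}_q(m)$ and deduces the containment $\FF_q(u_1(\alpha),\dots,u_r(\alpha))\subseteq\FF_q(u_1(\gamma),\dots,u_r(\gamma))$, whereas you observe that the failure condition $u_s(\alpha)=u_s(\alpha)^{q^{m/p}}$ for all $s$ translates, via coefficient comparison in the normal basis, into the relation $a_{si}=a_{s,i-m/p}$ on the coefficient vectors alone and is therefore independent of $\alpha$. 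Both routes are valid; yours buys a fully elementary, citation-free proof at the cost of a somewhat longer independence step, while the paper's is shorter but leans on the external normality result.
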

\begin{proof}
	Let $\beta\in\mathcal{N}_q(n)$. Since $\gcd(m,n) = 1$, it follows from \cite[Theorem 2.3.2]{gao1993NormalBasesFinite} that the elements $\{\beta,\beta^q,\beta^{q^2},\dots,\beta^{q^{n-1}}\}$ also form a normal basis of $\FF_{(q^m)^n}$ over $\FF_{q^m}$. Hence, $\beta\in\mathcal{N}_{q^m}(n)$ and the inclusion $\mathcal{N}_q(n)\subseteq\mathcal{N}_{q^m}(n)$ holds.

    Let $c_s\in\FF_{q^m}$ such that
	\[0 = \sum_{s=1}^r c_sv_s(\beta) = \left(\sum_{s=1}^r c_sv_s\right)(\beta).\]
	This equation describes a linear combination of $\beta,\beta^q,\dots,\beta^{q^{n-1}}$. It follows from $\beta\in\mathcal{N}_{q^m}(n)$ that $\sum_{s=1}^r c_sv_s = 0$. Using the same argument as in the proof of \Cref{Thm:Conj-Cancel}, the linear independence of $v_1,\dots,v_r$ over $\FF_q$ yields $c_1,\dots,c_r = 0$.
	The equivalence of \ref*{itm:41a} and \ref*{itm:41b} now follows by an identical argument on $u_1,\dots,u_r$ and the application of \Cref{Lma:Main-Lemma}.

	The implication \ref*{itm:41c} $\implies$ \ref*{itm:41b} is obvious. Thus, it remains to show \ref*{itm:41b} $\implies$ \ref*{itm:41c}. Let $\gamma\in\mathcal{N}_q(m)$ be an arbitrary normal element. Let $c_0,\dots,c_{n-1}\in\FF_q$ such that $\alpha = c_0\gamma+c_1\gamma^q+\dots+c_{n-1}\gamma^{q^{n-1}}$. Then
	\begin{align*}
		u_1(\alpha) &= u_1(c_0\gamma+c_1\gamma^q+\dots+c_{n-1}\gamma^{q^{n-1}}) \\
		&= c_0u_1(\gamma)+c_1u_1(\gamma)^q+\dots+c_{n-1}u_1(\gamma)^{q^{n-1}} \in\FF_q(u_1(\gamma)).
	\end{align*}
	This shows $\FF_q(u_1(\alpha)) \subseteq \FF_q(u_1(\gamma))$. Repeating this argument for the remaining $u_2,\dots,u_r$ shows
	\[\FF_{q^m} = \FF_q(u_1(\alpha),u_2(\alpha),\dots,u_r(\alpha)) \subseteq \FF_q(u_1(\gamma),u_2(\gamma),\dots,u_r(\gamma)),\]
	and thus $\FF_q(u_1(\gamma),u_2(\gamma),\dots,u_r(\gamma)) = \FF_{q^m}$. Since $\gamma$ was arbitrary, this implies (3).
\end{proof}

\Cref{Lma:Coeff-Polys-Linear} gives an analogous criterion to \Cref{Thm:Conj-Cancel} in the case of linearized coefficient polynomials, that is, bilinear diamond products. It is clear that a similar statement to \Cref{Cor:Conj-Cancel-nice-coeffs} can be shown in the case of bilinear $\phi$-products by replacing $X^i$ with $X^{q^i}$ and $Y^j$ with $Y^{q^j}$.

We remark that, analogous to \Cref{Sec:Algorithm}, we can characterize conjugate cancellation for bilinear diamond products using matrices. In this case however, we can make additional use of the fact that we are working with normal elements. Let $\alpha\in\mathcal N_q(m)$. The matrix representation of the Frobenius automorphism in the normal basis $\alpha,\alpha^q,\dots,\alpha^{q^{m-1}}$ is given by
\[A = \begin{pmatrix}
	0 & \cdots & 0 & 1 \\
	1 & \ddots &   & 0 \\
	  & \ddots & \ddots & \vdots \\
	  &   & 1 & 0 &   \\
\end{pmatrix}\in\FF_q^{m\times m}.\]
This matrix simply describes the cyclic shift of a vector. We can now apply the same approach as in \Cref{Sec:Algorithm} by letting $C = (c_{ij})_{ij}\in\FF_q^{m\times n}$ be the coefficient matrix of the $\phi$-product
\[\phi(X,Y) = \sum_{i=0}^{m-1}\sum_{j=0}^{n-1}c_{ij}X^{q^i}Y^{q^j} = \sum_{j=0}^{n-1} \psi_j(X)Y^{q^j}.\]
Then $x_j = (c_{0j},c_{1j},\dots,c_{m-1,j})^T$ is again the representation of $\psi_j(\alpha)$ in the normal basis generated by $\alpha$, and is also the $j$-th column of $C$. This means that the condition \ref*{itm:41a} in \Cref{Lma:Coeff-Polys-Linear} can be verified by checking $(A^d-I)C\neq 0$ for all proper divisors $d$ of $m$.
Note that this test only uses comparisons of elements and no arithmetic operations in $\FF_q$.

Analogous to the methods presented in \Cref{Sec:Construction-Simple}, we now provide a simple criterion to test if a linearized polynomial over $\FF_q$ satisfies $\psi(\alpha)\in\FFA_q(m)$.

\begin{theorem} \label{thm:linearized}
	Let $m>1$ and $m_1$ be the largest divisor of $m$ such that $m_1<m$. Let further $\psi\in\FF_q[X]$ be a $q$-polynomial with $\deg(\psi) < q^{m-m_1}$. Then $\psi(\alpha)\in\mathcal{F}_q(m)$ for all $\alpha\in\mathcal{N}_q(m)$.
\end{theorem}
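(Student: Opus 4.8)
The plan is to prove the equivalent statement $\FF_q(\psi(\alpha))=\FF_{q^m}$ by excluding that $\psi(\alpha)$ lies in any proper subfield of $\FF_{q^m}$. Since $\psi\in\FF_q[X]$ and $\alpha\in\mathcal N_q(m)\subseteq\FF_{q^m}$ we have $\psi(\alpha)\in\FF_{q^m}$ automatically, so suppose towards a contradiction that $\FF_q(\psi(\alpha))=\FF_{q^d}$ with $d$ a proper divisor of $m$; then $d\le m_1$ by the defining property of $m_1$, and this is the only fact about $m_1$ I will use.

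The first step uses that a $q$-polynomial is $\FF_q$-linear on $\FFC_q$: from $\psi(\alpha)^{q^d}=\psi(\alpha)$ and $\psi(\alpha)^{q^d}=\psi(\alpha^{q^d})$ we obtain $\psi(\alpha^{q^d}-\alpha)=0$. The second step exploits normality to spread this over a whole subspace. Consider the $\FF_q$-linear map $T\colon\FF_{q^m}\to\FF_{q^m}$, $T(x)=x^{q^d}-x$; it commutes with the Frobenius, and since the root set of $\psi$ in $\FFC_q$ is Frobenius-invariant, $\psi\bigl(T(\alpha^{q^j})\bigr)=\psi\bigl((\alpha^{q^d}-\alpha)^{q^j}\bigr)=0$ for every $j$. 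As $\{\alpha^{q^j}:0\le j\le m-1\}$ is an $\FF_q$-basis of $\FF_{q^m}$ by normality of $\alpha$, the elements $T(\alpha^{q^j})$ span $\img T$, so $\img T$ is contained in the $\FF_q$-subspace of roots of $\psi$ in $\FFC_q$.

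The final step is a dimension count. We have $\ker T=\FF_{q^m}\cap\FF_{q^d}=\FF_{q^{\gcd(m,d)}}=\FF_{q^d}$ because $d\mid m$, so $\dim_{\FF_q}\img T=m-d\ge m-m_1$ by rank--nullity. On the other hand, writing $\psi(X)=\sum_i c_iX^{q^i}$ with not all $c_i$ zero, the hypothesis $\deg(\psi)<q^{\,m-m_1}$ says that the largest $i$ with $c_i\ne0$ is at most $m-m_1-1$, and the roots in $\FFC_q$ of such a nonzero $q$-polynomial form an $\FF_q$-subspace of dimension at most $m-m_1-1$. Hence $m-m_1\le\dim_{\FF_q}\img T\le m-m_1-1$, a contradiction; therefore $\FF_q(\psi(\alpha))=\FF_{q^m}$, i.e.\ $\psi(\alpha)\in\FFA_q(m)$.

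I expect the one step that needs care is the reduction: it must be noticed that it is \emph{not} enough to treat only the maximal subfield $\FF_{q^{m_1}}$, since a proper divisor $d$ of $m$ need not divide $m_1$; rather, one runs the argument for an arbitrary proper divisor $d$ and uses only $d\le m_1$, which is precisely what matches the degree bound $q^{m-m_1}$. The remaining ingredients --- $\FF_q$-linearity of $q$-polynomials, the description of $\ker(x^{q^d}-x)$ on $\FF_{q^m}$, and the bound on the dimension of the root space of a $q$-polynomial --- are standard. As an alternative to the dimension count one may observe that $\psi\circ T$ is a nonzero $q$-polynomial of $q$-degree at most $(m-m_1-1)+d\le m-1$ that vanishes on all of $\FF_{q^m}$, which is impossible since it would then have at least $q^m$ roots.
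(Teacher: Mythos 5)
Your proof is correct and follows essentially the same route as the paper: both arguments use normality of $\alpha$ to propagate the single relation $\psi(\alpha)\in\FF_{q^d}$ into an $\FF_q$-subspace of $\ker\psi$ of dimension $m-d\ge m-m_1$, and then contradict the degree bound via rank--nullity. The only cosmetic difference is that the paper observes $\psi(\FF_{q^m})\subseteq\FF_{q^d}$ and applies rank--nullity to $\psi$ itself, whereas you apply it to $T(x)=x^{q^d}-x$ and note $\img T\subseteq\ker\psi$; both yield the identical estimate $\dim_{\FF_q}\ker\psi\ge m-d$.
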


\begin{proof}
	Let $\alpha\in\mathcal{N}_q(m)$ be fixed and let the integer $k$ be a divisor of $m$ such that $\FF_q(\psi(\alpha)) = \FF_{q^k}$. Since $\alpha$ is normal and $\psi$ is $q$-linear, it easily follows that $\psi(x)\in\FF_{q^k}$ for all $x\in\FF_{q^m}$. Notice that $\dim(\img(\psi)) \leqslant k$ if we view $\psi$ as a linear mapping over $\FF_q$. The rank-nullity theorem then implies $\dim(\ker(\psi)) \geqslant m-k$, so $\deg(\psi) \geqslant q^{m-k}$. The hypothesis $\deg(\psi) < q^{m-m_1}$ then implies $k = m$ and $\psi(\alpha)\in\mathcal{F}_q(m)$.
\end{proof}
Theorem~\ref{thm:linearized} thus yields a simple criterion when a $q$-polynomial defines a $\phi$-product that satisfies conjugate conjugation via Theorem~\ref{Lma:Coeff-Polys-Linear}.

\subsection{When is \texorpdfstring{$\phi(\alpha,\beta)$}{ɸ(α,β)} a normal element?}
A challenge in the study of normal elements is to describe transformations preserving normality.
In this subsection we study for which choices of $\phi$ the image $\phi(\alpha,\beta)$ of two normal elements is again normal.
It is interesting to note that this is never the case for the
sum of two normal elements as \cref{prop:normal_sum} shows, while the multiplication of two
normal elements is always normal \cite[Theorem 2.3.3]{gao1993NormalBasesFinite}.
The diamond product
 $\alpha\diamond\beta=\alpha+\beta+d$ with $d\in\FF_q$ generalizes the field addition and  was introduced in \cite{brawley1987IrreduciblesComposedProduct}.

\begin{proposition}\label{prop:normal_sum}
	Let $(\alpha,\beta)\in\mathcal{N}_q(m)\times\mathcal{N}_q(n)$ where $m,n>1$ and $\gcd(m,n)=1$. Then $\alpha+\beta+d$ is not normal over $\FF_q$ for all $d\in\FF_q$.
\end{proposition}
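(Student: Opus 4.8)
The plan is to argue by contradiction: suppose $\gamma = \alpha+\beta+d$ is normal over $\FF_q$, i.e. $\gamma \in \mathcal N_q(mn)$. The first step is to use $\gcd(m,n)=1$ together with the tower structure $\FF_q \subseteq \FF_{q^m} \subseteq \FF_{q^{mn}}$ and $\FF_q \subseteq \FF_{q^n} \subseteq \FF_{q^{mn}}$. Since $\gamma$ is normal over $\FF_q$ of degree $mn$, the trace-like sums of $\gamma$ down to the intermediate fields should be controllable; the key classical fact I would invoke is that if $\gamma$ is normal over $\FF_q$ and $\FF_{q^k}$ is an intermediate field, then $\operatorname{Tr}_{\FF_{q^{mn}}/\FF_{q^n}}(\gamma) = \sum_{i=0}^{m-1}\gamma^{q^{ni}}$ is a normal element of $\FF_{q^n}$ over $\FF_q$, and similarly $\operatorname{Tr}_{\FF_{q^{mn}}/\FF_{q^m}}(\gamma)$ is normal in $\FF_{q^m}$ over $\FF_q$ (this is a standard consequence of the theory of normal bases in towers, cf.\ the Gao reference already cited).

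The second step is to compute these two partial traces explicitly for $\gamma = \alpha+\beta+d$. Since $\alpha \in \FF_{q^m}$ and $\beta \in \FF_{q^n}$, applying the Frobenius $q^{ni}$ fixes $\beta$ (as $n \mid ni$) and cycles $\alpha$ through conjugates whose exponents run over $\ZZ_m$ (because $\gcd(m,n)=1$ forces $\{ni \bmod m : 0\le i \le m-1\} = \ZZ_m$). Hence
\[
\operatorname{Tr}_{\FF_{q^{mn}}/\FF_{q^n}}(\gamma) = \sum_{i=0}^{m-1}\alpha^{q^i} + m\beta + md = \operatorname{Tr}_{\FF_{q^m}/\FF_q}(\alpha) + m\beta + md.
\]
Now $\operatorname{Tr}_{\FF_{q^m}/\FF_q}(\alpha) + md \in \FF_q$, so this partial trace equals $m\beta + c$ for some constant $c \in \FF_q$. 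If $p \mid m$ where $p = \operatorname{char}(\FF_q)$, this is simply $c \in \FF_q$, which is certainly not normal in $\FF_{q^n}$ over $\FF_q$ since $n>1$ — contradiction. If $p \nmid m$, then $m\beta + c$ is an $\FF_q$-affine image of $\beta$; I would then note that $\beta$ normal over $\FF_q$ is equivalent to $m\beta$ being normal (scaling by a nonzero scalar in $\FF_q$ preserves normality), but adding the constant $c$ need not, so here I must be a little more careful — the cleanest route is to observe that normality of $\mu := m\beta+c$ is equivalent to $\{\mu^{q^j}\}_{j=0}^{n-1}$ being $\FF_q$-linearly independent, and since $\mu^{q^j} = m\beta^{q^j} + c$, a nontrivial dependence $\sum a_j \mu^{q^j}=0$ exists iff $\sum a_j = 0$ can be combined with the (impossible) dependence among $\beta^{q^j}$'s; more directly, $\sum_{j} \beta^{q^j} = \operatorname{Tr}_{\FF_{q^n}/\FF_q}(\beta) \in \FF_q$ gives a dependence among $\{\mu^{q^j}\}$ unless this trace interacts with $c$ in just the wrong way. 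The truly clean argument: apply $\operatorname{Tr}_{\FF_{q^{mn}}/\FF_{q^m}}$ as well, getting $n\alpha + c'$ with $c' \in \FF_q$, and play the two off each other — at least one of $p\mid m$, $p \mid n$, or a direct linear-dependence argument must succeed.

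I expect the main obstacle to be precisely this last point: pinning down that an affine image $a\beta + c$ (with $a \in \FF_q^\times$, $c \in \FF_q$) of a normal element fails to be normal, or rather exploiting that it \emph{need not} matter. The slick fix is to use the \emph{derivative} trick: $\gamma$ is normal over $\FF_q$ iff $\gcd\!\big(\sum_{i=0}^{mn-1}\gamma^{q^i}X^i,\ X^{mn}-1\big)=1$ in $\FF_q[X]$ via the $q$-associate / linearized-polynomial criterion for normal bases; equivalently one tracks the $\FF_q[X]$-module structure. Under this lens, $\alpha+\beta+d$ corresponds to the sum of the $q$-associates of $\alpha$ and $\beta$ (plus a constant contributing a factor of $(X-1)$-type), and since the $\ZZ_q[X]$-order of $\alpha$ divides $X^m-1$ while that of $\beta$ divides $X^n-1$, and $\gcd(X^m-1, X^n-1) = X^{\gcd(m,n)}-1 = X-1$, the order of the sum divides $\operatorname{lcm}(X^m-1, X^n-1)/(X-1)$ or similar, which has degree strictly less than $mn$ — so $\alpha+\beta$ cannot generate a normal basis of $\FF_{q^{mn}}$. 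I would carry out this module-theoretic computation carefully, as it is where all the arithmetic lives; everything else is bookkeeping with traces and the coprimality hypothesis.
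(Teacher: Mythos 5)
Your first strategy (partial traces to intermediate fields) has a real gap, which you half-detect yourself: when $p\nmid m$ the trace $\operatorname{Tr}_{\FF_{q^{mn}}/\FF_{q^n}}(\alpha+\beta+d)=\operatorname{Tr}_{\FF_{q^m}/\FF_q}(\alpha)+m\beta+md$ is an invertible $\FF_q$-affine image $a\beta+c$ of the normal element $\beta$, and a short computation (using that $\operatorname{Tr}_{\FF_{q^n}/\FF_q}(\beta)\neq 0$ for normal $\beta$) shows that such an image is again normal for every $c$ except at most one. So when $p\nmid mn$ no contradiction arises, and ``at least one of the cases must succeed'' is not an argument.

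Your second strategy, however, is correct and essentially complete: the $\FF_q[X]$-order of $\alpha$ under the Frobenius action divides $X^m-1$, that of $\beta$ divides $X^n-1$, that of $d$ divides $X-1$, hence the order of $\alpha+\beta+d$ divides $\lcm(X^m-1,X^n-1)=(X^m-1)(X^n-1)/(X-1)$, whose degree $m+n-1$ is strictly less than $mn$ since $m,n>1$; a normal element of $\FF_{q^{mn}}$ must have order exactly $X^{mn}-1$. (Two slips to fix: the order divides the lcm itself, not the lcm divided once more by $X-1$; and it is the $\FF_q[X]$-order, not ``$\ZZ_q[X]$''.) This route differs from the paper's, which is bare-hands linear algebra: every conjugate $(\alpha+\beta+d)^{q^k}$ lies in the $\FF_q$-span of the $m+n$ elements $\alpha^{q^i},\beta^{q^j}$ together with the constant $d$, so a nontrivial dependence among the $mn$ conjugates is produced by counting ($m+n$ homogeneous conditions on $mn$ unknowns, and $mn>m+n$ because $m,n>1$ are coprime), with the constructed coefficients summing to zero so that $d$ drops out. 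Both proofs encode the same obstruction --- the conjugates of $\alpha+\beta+d$ span a space of dimension at most $m+n+1<mn$ --- but yours packages it via annihilator polynomials, which is slicker and quantifies the defect, while the paper's is self-contained and exhibits an explicit dependence.
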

\begin{proof}
	Starting with $d = 0$, let $s_k\in\FF_q$ for $k = 0,\dots,mn-1$ such that
	\[0 = \sum_{k=0}^{mn-1} s_k (\alpha+\beta)^{q^k}
	= \sum_{i=0}^{m-1}\sum_{j=0}^{n-1} s_{jm + i} \alpha^{q^i} + \sum_{j=0}^{n-1}\sum_{i=0}^{m-1} s_{in + j} \beta^{q^j}.\]
	In particular, the above equation is satisfied if
	\[ 0 = \sum_{i=0}^{m-1}\sum_{j=0}^{n-1} s_{jm + i}\alpha^{q^i} \quad\text{ and }\quad 0 = \sum_{j=0}^{n-1}\sum_{i=0}^{m-1} s_{in + j} \beta^{q^j}. \]
	This leads to the $m+n$ equations $\sum_{j=0}^{n-1} s_{jm + i} = 0$ for all $i$ and $\sum_{i=0}^{m-1} s_{in + j} = 0$ for all $j$. Since we have $mn$ unknowns $s_k$, there will always be a non-trivial solution.

	Let $d\in\FF_q$ be arbitrary. Using the same $s_k$ as above we have
	\begin{align*}
		\sum_{k=0}^{mn-1}s_k(\alpha+\beta+d)^{q^k} &= d\sum_{k=0}^{mn-1}s_k + \sum_{k=0}^{mn-1}s_k(\alpha+\beta)^{q^k} \\
		&= d\sum_{i=1}^{m-1}\sum_{j=0}^{n-1} s_{jm + i} + 0\\
		&= 0.
	\end{align*}
\end{proof}

We provide a simple criterion for the normality of the evaluations of $\phi$-products given by
\begin{equation}\label{Equ:Staircase-Def-Repr}
	\phi(X,Y) = \sum_{i=0}^{m-1}\sum_{j=0}^{n-1}c_{ij}X^{q^i}Y^{q^j}.
\end{equation}
For this representation we introduce the so-called staircase-polynomials.

\begin{definition}
	Given a $q$-linear $\phi$-product as in \eqref{Equ:Staircase-Def-Repr}, the staircase-polynomial $e\in\FF_q[X]$ is defined as
	\[ e=\sum_{k = 1}^{mn-1} c_{k\text{ mod } m, k\text{ mod } n} X^k. \]
\end{definition}

The name is motivated from the idea that the coefficients of $\phi$ can be arranged in a matrix $C = (c_{ij})_{ij}$ of size $m\times n$. Then the coefficients from the staircase-polynomial are obtained by following the diagonal.

Let $(\alpha,\beta)\in\mathcal{N}_q(m)\times\mathcal{N}_q(n)$, where $\gcd(m,n) = 1$. Then it can be shown that the product $\alpha\beta$ is a normal element \cite[Theorem 2.3.3]{gao1993NormalBasesFinite}. Hence, every element in $\FF_{q^{mn}}$ can be represented using the ordered $\FF_q$-basis $(\alpha\beta, (\alpha\beta)^q,\dots,(\alpha\beta)^{q^{mn-1}})$. By \cite[Theorem 2.2.6]{gao1993NormalBasesFinite} %answers the question when
a linear combination %of a normal basis is again normal. In particular, an element
$\gamma = \sum_{k=0}^{mn-1} r_k (\alpha\beta)^{q^k}$, where $r_k\in\FF_q$, is normal if and only if the polynomial $\gamma(X) = \sum_{k=0}^{mn-1} r_kX^k$ is coprime to $X^{mn}-1$. Observe that $\phi(\alpha,\beta)$ can be represented in the ordered basis induced by $\alpha\beta$. Thus, choosing $r_k$ such that $\gamma = \phi(\alpha,\beta)$ we have
\[ \sum_{i=0}^{m-1}\sum_{j=0}^{n-1}c_{ij}\alpha^{q^i}\beta^{q^j} = \sum_{k=0}^{mn-1} r_k (\alpha\beta)^{q^k} = \sum_{k=0}^{mn-1} r_k \alpha^{q^{k\text{ mod } m}}\beta^{q^{k\text{ mod } n}},\]
where $c_{ij}$ is defined by \eqref{Equ:Staircase-Def-Repr}. Therefore, $r_k = c_{k\text{ mod } m, k\text{ mod } n}$. This means that $\gamma(X)$ is exactly the staircase-polynomial of $\phi$. Hence, $\phi(\alpha,\beta)$ is normal if and only if the staircase-polynomial is coprime to $X^{mn}-1$.

Hence, Theorem 2.2.6 from \cite{gao1993NormalBasesFinite} reduces in our notation to:

\begin{theorem}\label{Thm:Staircase-Normal}
	Let $(\alpha,\beta)\in\mathcal{N}_q(m)\times \mathcal{N}_q(n)$, where $\gcd(m,n) = 1$. Then $\phi(\alpha,\beta)$ is normal if and only if the staircase-polynomial is relatively prime to $X^{mn}-1$.
\end{theorem}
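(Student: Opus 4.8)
The plan is to transport the question into a fixed normal basis of $\FF_{q^{mn}}$ and then apply the classical description of which elements in the span of a normal basis are themselves normal. First I would use that, since $\gcd(m,n)=1$ and both $\alpha$ and $\beta$ are normal over $\FF_q$, the product $\alpha\beta$ is normal over $\FF_q$ by \cite[Theorem 2.3.3]{gao1993NormalBasesFinite}; hence the conjugates $(\alpha\beta)^{q^k}$ for $k=0,\dots,mn-1$ form an $\FF_q$-basis of $\FF_{q^{mn}}$. The second input is \cite[Theorem 2.2.6]{gao1993NormalBasesFinite}: an element $\gamma=\sum_{k=0}^{mn-1} r_k(\alpha\beta)^{q^k}$ with $r_k\in\FF_q$ is normal over $\FF_q$ if and only if $\gcd\!\big(\sum_{k=0}^{mn-1} r_kX^k,\,X^{mn}-1\big)=1$. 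So the task reduces to computing the coordinate vector of $\gamma=\phi(\alpha,\beta)$ in this basis and recognizing the polynomial it defines.

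The heart of the argument is this coordinate computation. Since $\gcd(m,n)=1$, the map $\ZZ_{mn}\to\ZZ_m\times\ZZ_n$ given by $k\mapsto(k\bmod m,\,k\bmod n)$ is a bijection by the Chinese Remainder Theorem (\cref{g:CRT}). Because $(\alpha\beta)^{q^k}=\alpha^{q^k}\beta^{q^k}=\alpha^{q^{k\bmod m}}\beta^{q^{k\bmod n}}$, the basis $\{(\alpha\beta)^{q^k}\}_{k=0}^{mn-1}$ is just the reindexing of $\{\alpha^{q^i}\beta^{q^j}:0\le i<m,\ 0\le j<n\}$ along that bijection; in particular the latter family is also an $\FF_q$-basis, so the defining expansion $\phi(\alpha,\beta)=\sum_{i,j}c_{ij}\alpha^{q^i}\beta^{q^j}$ from \eqref{Equ:Staircase-Def-Repr} is already the unique coordinate expansion of $\phi(\alpha,\beta)$ in it. Transporting this expansion back along the bijection and matching it term by term with $\sum_k r_k\alpha^{q^{k\bmod m}}\beta^{q^{k\bmod n}}$ forces $r_k=c_{k\bmod m,\,k\bmod n}$ for all $k$; that is, $\sum_k r_kX^k$ is precisely the staircase-polynomial $e$ of $\phi$.

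Feeding this back into the criterion of \cite[Theorem 2.2.6]{gao1993NormalBasesFinite} yields exactly the assertion: $\phi(\alpha,\beta)$ is normal over $\FF_q$ if and only if $e$ is coprime to $X^{mn}-1$. I expect the only genuinely delicate part to be the bookkeeping in the middle step — verifying that $k\mapsto(k\bmod m,k\bmod n)$ is indeed a bijection of the relevant index ranges, that the reindexed family $\{\alpha^{q^i}\beta^{q^j}\}$ truly forms a basis so that coordinates are well defined and unique, and then tracking the relabelling carefully enough to see that the two-index coefficient array $(c_{ij})$ collapses onto exactly the diagonal pattern in the definition of $e$. Everything else is a direct appeal to the two cited results of Gao.
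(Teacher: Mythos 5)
Your proposal is correct and follows essentially the same route as the paper: both arguments combine Gao's result that $\alpha\beta$ is normal with his coprimality criterion for normality of elements expanded in a normal basis, and both identify the coordinates of $\phi(\alpha,\beta)$ in the basis $\{(\alpha\beta)^{q^k}\}$ with the staircase coefficients via the CRT bijection $k\mapsto(k\bmod m,\,k\bmod n)$. Your added care about the uniqueness of the coordinate expansion is a welcome but minor elaboration of the same argument.
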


We now consider the special case of $\phi(\alpha,\beta)=\alpha^{q^k}\beta\pm \alpha\beta^{q^\ell}$.
These $\phi$-products satisfy conjugate cancellation for normal elements by \Cref{Lma:Coeff-Polys-Linear} as we can express $\phi$ by $\phi(X,Y) = u_1(X)v_1(Y) + u_2(X)v_2(Y)$ with $u_2(X) = X$ and $v_1(Y) = Y$.
Note that for certain choices of $q,k,l$ these $\phi$-products are actually the multiplication of a semifield, called the generalized twisted fields~\cite{albert1961generalized}.

\begin{theorem}\label{thm:albert}
	Let $(\alpha,\beta)\in\mathcal{N}_q(m)\times\mathcal{N}_q(n)$ where $\gcd(m,n)=1$. Then $\alpha^{q^k}\beta+\alpha\beta^{q^\ell}$ is normal over $\FF_q$ if and only if $q$ is odd and one of the properties holds:
	\begin{enumerate}
		\item $m,n$ are odd
		\item $m$ is even with $\nu_2(m) \leqslant \nu_2(k)$
		\item $n$ is even with $\nu_2(n) \leqslant \nu_2(\ell)$
	\end{enumerate}
\end{theorem}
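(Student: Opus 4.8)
The plan is to reduce the statement to a coprimality condition via \Cref{Thm:Staircase-Normal} and then analyse that condition by elementary $2$-adic and root-of-unity arguments. Set $\phi(X,Y)=X^{q^k}Y+XY^{q^\ell}$, so that $\alpha^{q^k}\beta+\alpha\beta^{q^\ell}=\phi(\alpha,\beta)$; this $\phi$-product already satisfies conjugate cancellation for normal $\alpha,\beta$ by \Cref{Lma:Coeff-Polys-Linear}. First I would identify the staircase-polynomial of $\phi$. Since $\gcd(m,n)=1$, the Chinese Remainder Theorem yields unique $t_1,t_2\in\{0,\dots,mn-1\}$ with $t_1\equiv k\pmod m$, $t_1\equiv 0\pmod n$ and $t_2\equiv 0\pmod m$, $t_2\equiv\ell\pmod n$; expanding $\phi(\alpha,\beta)$ in the normal basis generated by $\alpha\beta$ (which is normal by \cite[Theorem 2.3.3]{gao1993NormalBasesFinite} since $\gcd(m,n)=1$) shows the staircase-polynomial equals $X^{t_1}+X^{t_2}$ when $t_1\neq t_2$, and $2X^{t_1}$ when $t_1=t_2$ (which happens exactly when $m\mid k$ and $n\mid\ell$). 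Factoring out the common power of $X$, which is coprime to $X^{mn}-1$, \Cref{Thm:Staircase-Normal} reduces the whole theorem to deciding when $\gcd(X^{d}+1,\,X^{mn}-1)=1$ in $\FF_q[X]$, where $d=|t_1-t_2|$ (and $d=0$ is read as $\gcd(2,\,X^{mn}-1)$).

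Next I would settle the parity of $q$. If $q$ is even, then $X^d+1=X^d-1$ and $\gcd(X^d-1,X^{mn}-1)=X^{\gcd(d,mn)}-1\neq 1$ for $d\geq 1$, while for $d=0$ the staircase-polynomial vanishes; hence $\phi(\alpha,\beta)$ is never normal, which forces $q$ to be odd. Assuming $q$ odd with $\mathrm{char}(\FF_q)=p$, I would use the Frobenius identities $X^d+1=(X^{d_0}+1)^{p^{c}}$ and $X^{mn}-1=(X^{M_0}-1)^{p^{a}}$, where $p\nmid d_0,M_0$, to see that the coprimality is equivalent to $\gcd(X^{d_0}+1,X^{M_0}-1)=1$ with both factors separable. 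Passing to $\FFC_q$, this $\gcd$ is nontrivial precisely when some $M_0$-th root of unity $\zeta$ satisfies $\zeta^{d_0}=-1$; a short computation in the cyclic group of $M_0$-th roots of unity (the congruence $jd_0\equiv M_0/2\pmod{M_0}$ being solvable iff $\gcd(d_0,M_0)\mid M_0/2$) shows that such a $\zeta$ exists if and only if $\nu_2(M_0)>\nu_2(d_0)$. As $p$ is odd we have $\nu_2(M_0)=\nu_2(mn)$ and $\nu_2(d_0)=\nu_2(d)$, so coprimality holds if and only if $\nu_2(mn)\leq\nu_2(d)$ (reading $\nu_2(0)=\infty$, which also covers $d=0$).

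Finally I would translate $\nu_2(mn)\leq\nu_2(d)$ into the three cases. From $t_1-t_2\equiv k\pmod m$ and $t_1-t_2\equiv-\ell\pmod n$ one gets $\nu_2(d)=\nu_2(t_1-t_2)$, $d\equiv k\pmod m$ and $d\equiv-\ell\pmod n$. If $m$ and $n$ are both odd, then $\nu_2(mn)=0$ and the condition always holds (case (i)). If $m$ is even (hence $n$ odd), then $\nu_2(mn)=\nu_2(m)$, and since $2^{\nu_2(m)}\mid m$ and $d\equiv k\pmod m$ we obtain $\nu_2(d)\geq\nu_2(m)\iff 2^{\nu_2(m)}\mid k\iff\nu_2(m)\leq\nu_2(k)$ (case (ii)); the case $n$ even is symmetric via $d\equiv-\ell\pmod n$ and yields $\nu_2(n)\leq\nu_2(\ell)$ (case (iii)). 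Since $\gcd(m,n)=1$ means exactly one of ``$m,n$ both odd'', ``$m$ even'', ``$n$ even'' holds, combining this with the parity of $q$ gives the stated equivalence.

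The main obstacle I expect is bookkeeping rather than a single hard idea: making the $2$-adic root-of-unity step fully rigorous (the exact threshold $\nu_2(M_0)>\nu_2(d_0)$, and that the constructed $\zeta$ really satisfies $\zeta^{d_0}=-1$ rather than merely $\zeta^{d_0}\neq 1$), correctly discarding the inseparable part when $p\mid mn$, and checking that the degenerate configurations---$t_1=t_2$, or $t_i=0$ when $m\mid k$ or $n\mid\ell$---stay consistent with conditions (i)--(iii).
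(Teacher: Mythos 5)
Your proposal is correct and follows essentially the same route as the paper: compute the staircase polynomial of $X^{q^k}Y+XY^{q^\ell}$ via CRT, dispose of even $q$ by noting $1$ is a common root, and reduce normality through \cref{Thm:Staircase-Normal} to the coprimality of $X^{mn}-1$ with a binomial $X^{|s-t|}+1$, settled by the same $2$-adic criterion. The only differences are cosmetic: you derive the $\gcd(X^i-1,X^r+1)$ identity from scratch via roots of unity where the paper cites it as well known, and you explicitly treat the degenerate case $s=t$ (and the inseparable part when $p\mid mn$), which the paper glosses over.
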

\begin{proof}
	The staircase polynomial of $X^{q^k}Y+XY^{q^\ell}$ is given by $X^s+X^t$ for some integers $s,t$. In particular, $s$ and $t$ are uniquely determined by the Chinese remainder theorem as
	\begin{align*}
		s &= kn\overline{n} \\
		t &= \ell m \overline{m}
	\end{align*}
    where $\overline{n}$ is the least positive residue of the inverse of $n$ modulo $m$; and similarly $\overline{m}$ is the least positive residue of the inverse of $m$ modulo $n$.

    We immediately see that if $q$ is even then 1 is a root of both $X^{mn}-1$ and $X^s+X^t$, and Theorem~\ref{Thm:Staircase-Normal} gives the result.

    So assume that $q$ is odd.
	To calculate $\gcd(X^{mn}-1, X^s+X^t)$ we may assume $s < t$. The case $s>t$ can be done in the same way. Because $X^{mn}-1$ has no root in $0$, we have $\gcd(X^{mn}-1, X^s+X^t) = \gcd(X^{mn}-1, X^{t-s}+1)$. We may now use this well known property
	\[ \gcd(X^i-1, X^r+1) = \left\{\begin{array}{cl}
		X^{\gcd(i,r)}+1 & \text{if } \frac{i}{\gcd(i,r)} \text{ is even} \\
		1 & \text{else.}
	\end{array}\right.\]
	We now go over the different possible cases that may arise.
	\begin{enumerate}
		\item If $m$ and $n$ are odd, it follows that $X^{mn}-1$ and $X^s+X^t$ are coprime.
		\item If $m$ is even, then so is $t$. Because $m,n$ are coprime it follows that $n$ and $\overline{n}$ are odd. In particular, $\nu_2(s) = \nu_2(k)$. Therefore, $\nu_2(t-s) = \min\{\nu_2(t), \nu_2(k)\}$ and $\nu_2(t) \geqslant \nu_2(m)$. Observe that
		\begin{equation}\label{Equ:Thm47-1}
			\begin{split}
				\nu_2\left(\frac{mn}{\gcd(mn,t-s)}\right) &= \nu_2(m) - \min\{\nu_2(m),\nu_2(t-s)\} \\
				&= \nu_2(m) - \min\{\nu_2(m),\nu_2(t),\nu_2(k)\} \\
				&= \nu_2(m) - \min\{\nu_2(m),\nu_2(k)\}.
			\end{split}
		\end{equation}
		From this it follows that \eqref{Equ:Thm47-1} becomes $0$ if and only if $\nu_2(m) \leqslant \nu_2(k)$.
		\item The case $n$ even is done in the same way as (ii). In particular, $\nu_2(n) \leqslant \nu_2(\ell)$ is equivalent to $\nu_2\left(\frac{mn}{\gcd(mn,t-s)}\right) = 0$.
	\end{enumerate}
	Finally, the application of \Cref{Thm:Staircase-Normal} yields the desired result.
\end{proof}

\begin{proposition}\label{prop:albert}
	Let $(\alpha,\beta)\in\mathcal{N}_q(m)\times\mathcal{N}_q(n)$ where $\gcd(m,n)=1$. Then $\alpha^{q^k}\beta-\alpha\beta^{q^\ell}$ is never normal over $\FF_q$.
\end{proposition}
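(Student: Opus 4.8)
The plan is to mirror the proof of \Cref{thm:albert} and invoke \Cref{Thm:Staircase-Normal}. First I would compute the staircase-polynomial of $\phi(X,Y) = X^{q^k}Y - XY^{q^\ell}$. Exactly as in the proof of \Cref{thm:albert}, the only nonzero diagonal coefficients come from the two monomials $X^{q^k}Y$ and $XY^{q^\ell}$, so the staircase-polynomial has the form $X^s - X^t$, where $s$ and $t$ are determined by the Chinese remainder theorem as $s = kn\overline n$ and $t = \ell m\overline m$, with $\overline n$ the least positive residue of $n^{-1} \bmod m$ and $\overline m$ the least positive residue of $m^{-1}\bmod n$. (When $q$ is even this is literally the plus case, and \Cref{thm:albert} already covers it; the argument below works uniformly in all characteristics.)

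The key observation -- and the one place where the sign genuinely matters -- is that $X = 1$ is always a root of $X^s - X^t$, since $1^s - 1^t = 0$ in any field, in contrast to $X^s + X^t$, which evaluates to $2$ at $X=1$. As $X = 1$ is also a root of $X^{mn}-1$, the factor $X-1$ divides $\gcd(X^{mn}-1,\, X^s - X^t)$, so this gcd is never $1$. If $s = t$, the staircase-polynomial is the zero polynomial and $\gcd(X^{mn}-1,0) = X^{mn}-1 \neq 1$, so the conclusion holds in this degenerate case as well; this situation corresponds to $m\mid k$ and $n\mid\ell$, where $\phi(\alpha,\beta) = 0$. Applying \Cref{Thm:Staircase-Normal} then shows that $\alpha^{q^k}\beta - \alpha\beta^{q^\ell}$ is not normal over $\FF_q$, for every choice of $q, k, \ell$.

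I expect essentially no obstacle here: once the staircase-polynomial is identified via the same CRT computation as before, the statement reduces to the trivial remark that $x\mapsto x^s - x^t$ vanishes at $1$. If one prefers an explicit gcd, one can finish instead with the formula $\gcd(X^{mn}-1,\, X^s - X^t) = X^{\gcd(mn,\,|s-t|)} - 1$ when $s\neq t$ -- using that $X^{mn}-1$ has no root at $0$, so that $\gcd(X^{mn}-1,\, X^s - X^t) = \gcd(X^{mn}-1,\, X^{|s-t|}-1)$ -- which is always a nontrivial divisor of $X^{mn}-1$.
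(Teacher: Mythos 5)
Your proposal is correct and follows essentially the same route as the paper's proof: identify the staircase polynomial as $X^s-X^t$ via the computation from \cref{thm:albert}, note that it shares the root $1$ with $X^{mn}-1$, and conclude via \cref{Thm:Staircase-Normal}. Your extra remarks on the degenerate case $s=t$ and the explicit gcd formula are fine but not needed.
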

\begin{proof}
	As in the previous proof we may use that the staircase polynomial of the diamond product is given by $X^s-X^t$ for some integers $s,t$. Observe that both the staircase polynomial and $X^{mn}-1$ have a root at $1$.
	Thus, the staircase polynomial always has a common divisor with $X^{mn}-1$.
\end{proof}

\begin{remark}
    As mentioned before, \cref{thm:albert} and \cref{prop:albert} in particular deal with the cases that the $\phi$-product is the multiplication of a semifield, which means that the binary operation $x \circ y :=\phi(x,y)$ distributes over addition and has no zero divisors; in our cases the specific semifield operation was that of a \emph{generalized twisted field}, see~\cite{albert1961generalized}. It is a natural question whether similar results are possible for other semifield multiplications. The general case is however not as straightforward: By \cref{Thm:Staircase-Normal}, we need to check if the staircase polynomial of the semifield multiplication is relatively prime to $X^{mn}-1$. For the generalized twisted fields we considered, this is comparatively easy since the staircase polynomial is a binomial and has thus a very simple structure. In the general case, the staircase polynomial of a semifield will be much more complicated. Indeed, the complexity of the staircase polynomial of a semifield multiplication is closely related to the matrix rank and BEL-rank of the semifield, for definitions see~\cite[Section 4]{lavrauw2017bel}. In~\cite{lavrauw2017bel} it is shown that the twisted fields have BEL-rank 2 (leading to a binomial for a $\phi$-product), and computational results in~\cite[Section 7]{lavrauw2017bel}  indicate that most other semifields have significantly higher matrix rank and BEL-rank, yielding more complex staircase polynomials that are harder to analyze.
\end{remark}

\bibliographystyle{plain}
\bibliography{ref_zotero,ref_manual}

\end{document}